\documentclass[12pt,a4paper]{amsart}
\usepackage{graphicx}
\usepackage{amsmath}
\usepackage{tikz-cd} 
\usepackage{amsthm}
\usepackage{amssymb}
\usepackage{faktor}
\usepackage{color,soul}
\usepackage{mathtools}
\usepackage{mathrsfs}
\usepackage {latexsym}
\usepackage{hyperref}
\usepackage{epsfig}
\usepackage{tikz}
\usetikzlibrary{arrows}
\usepackage{tikzit}
\usepackage{indentfirst}

\makeatletter
\@namedef{subjclassname@2020}{\textup{2020} Mathematics Subject Classification}
\makeatother

\hypersetup{
    colorlinks=true,       
    linkcolor=blue,        
    citecolor=blue,        
    urlcolor=blue,         
}

\newtheorem{theorem}{{Theorem}}[section]
\newtheorem{lemma}[theorem]{{Lemma}}

\newtheorem{corollary}[theorem]{{Corollary}}
\theoremstyle{definition}
\newtheorem{definition}[theorem]{{Definition}}

\newtheorem{remark}[theorem]{Remark}

\newtheorem*{ack}{Acknowledgement}
\numberwithin{equation}{subsection}

\tikzstyle{new style 0}=[fill=white, draw=black, shape=circle]

\tikzstyle{new edge style 0}=[<-]
\tikzstyle{new edge style 1}=[->]
\tikzstyle{new edge style 2}=[<->]
\tikzstyle{new edge style 3}=[-, fill=none, draw={rgb,255: red,26; green,118; blue,230}]
\tikzstyle{new edge style 4}=[-, draw={rgb,255: red,226; green,8; blue,74}]
\tikzstyle{new edge style 5}=[->, fill=none, draw={rgb,255: red,10; green,158; blue,238}]

\usepackage[all,cmtip]{xy}

\newcommand{\degree}{\operatorname{deg}}
\newcommand{\res}{\operatorname{res}}

\newcommand{\modulo}{\operatorname{mod}}
\newcommand{\Id}{\operatorname{Id}}

\newcommand{\Spe}{\operatorname{Spe}}

\newcommand{\diag}{\operatorname{Diag}}
\newcommand{\Aut}{\operatorname{Aut}}

\newcommand{\Inn}{\operatorname{Inn}}
\newcommand{\Out}{\operatorname{Out}}
\newcommand{\T}{\operatorname{T}}

\newcommand{\R}{\operatorname{R}}

\newcommand{\s}{\operatorname{S}}
\newcommand{\sw}{\operatorname{\mathcal{SW}}}

\newcommand{\Z}{\operatorname{Z}}
\newcommand{\p}{\operatorname{P}}
\newcommand{\C}{\operatorname{C}}
\newcommand{\U}{\operatorname{U}}

\newcommand{\odd}{\mathrm{odd}}
\newcommand{\even}{\mathrm{even}}

\newcommand{\Q}{\operatorname{Q}}
\usepackage[hmargin=2.5cm,vmargin=2.5cm]{geometry}

\title{Symmetry and Rigidity of Star-Shaped Coxeter Systems}

\author{Arijit Mahato} 

\author{Tushar Kanta Naik}

\author{A Rameswar Patro}

\subjclass[2020]{Primary 20F55; Secondary  20F28, 20E36}
\keywords{Diagram Twisting, Diagram Automorphisms, Isomorphism Problem, Partial Conjugation, Pseudo-transposition, Rigidity, $R_\infty$-Property, Transvections}

\begin{document}

\begin{abstract}
We provide a complete description of the automorphism group $\Aut (W)$ of a Coxeter group $W$ admitting a star-shaped finite Coxeter diagram. We prove that each automorphism decomposes as a product of inner and diagram automorphisms, along with three additional types: transvections and two families of partial conjugations. Furthermore, we investigate the natural short exact sequence $1 \to \Inn (W) \to \Aut (W) \to \Out (W) \to 1$. Using Moussong's criteria for hyperbolicity, we show that these groups possess the $R_\infty$-property. Finally, we establish rigidity properties for these groups using known techniques and provide a solution to the isomorphism problem within the class of star-shaped Coxeter systems. 
\end{abstract}

\maketitle
\section{Introduction}
A group $W$ is called a {\it Coxeter group} if it admits a presentation $W = \langle S \mid (s_i s_j)^{m(s_i, s_j)} = 1, s_i, s_j\in S \rangle$, where $m(s_i, s_j)= m(s_j, s_i) \in \mathbb{N} \cup \{\infty\}$, and $m(s_i, s_j)= 1$ if and only if $s_i=s_j$. The pair $(W, S)$ is called a {\it Coxeter system}, and the values $m(s_i, s_j)$ for $s_i\neq s_j$ are known as the {\it labels} (or {\it exponents}) of the system. The cardinality of $S$ is defined as the {\it rank} of the system. For each subset $I \subseteq S$, the subgroup $W_I = \langle I \rangle$ is called a {\it standard parabolic subgroup}, and its conjugates are called {\it parabolic subgroups}, which are themselves Coxeter groups.

A convenient way to encapsulate the presentation of a Coxeter group is via graphs or diagrams. The {\it Coxeter graph} $\Gamma_{(W, S)}$ has vertex set $S$, where two distinct vertices $s_i, s_j$ are joined by an edge if and only if $m(s_i,s_j) \ge 3$; with labels for $m(s_{i},s_{j}) \ge 4$. Parallel to this is the {\it finite (Coxeter) diagram} $\mathcal{V}_{(W, S)}$, which shares the same vertex set $S$. In this diagram, two distinct vertices $s_i, s_j \in S$ are joined by an edge labelled with $m(s_i, s_j)$ if and only if $m(s_i, s_j)$ is finite. These graphical representations reveal structural features at a glance. For instance, if the  Coxeter graph $\Gamma_{(W, S)}$ is disconnected, then $W$ decomposes as a direct product of the parabolic subgroups associated with the connected components. When $\Gamma_{(W, S)}$ is connected, $W$ cannot be expressed as a direct product of proper parabolic subgroups; accordingly, the Coxeter system $(W, S)$ is termed {\it irreducible}. On the other hand, if the finite diagram $\mathcal{V}_{(W, S)}$ is disconnected, then  $W$ decomposes as a free product of the parabolic subgroups. It is worth noting that individually, these two diagrams determine the underlying Coxeter group uniquely. Consequently, any graph automorphism of these diagrams extends naturally to an automorphism of the group $W$, referred to as a {\it diagram automorphism}. The collection of all diagram automorphisms of $W$ forms a subgroup of $\Aut(W)$, denoted by $\diag(W)$.

While a Coxeter system $(W, S)$ uniquely determines the group $W$, the group itself does not uniquely determine the system. This motivates the well-known isomorphism problem for Coxeter groups: {\it determining when two distinct Coxeter presentations (or equivalently Coxeter graphs or finite diagrams) define isomorphic groups.} A classic example is the dihedral group $D_{12}$, which attains two non-isomorphic Coxeter presentations: $\langle r,s \mid r^2=s^2=(rs)^6=1\rangle$ (irreducible), and $\langle a,b \mid a^2=b^2=(ab)^3=1\rangle \times \langle c \mid c^2=1\rangle$ (reducible). This raises the question, posed by Cohen \cite[Problem 6.5]{Coh-91}, {\it whether two non-isomorphic irreducible Coxeter graphs always correspond to non-isomorphic Coxeter groups}. M\"uhlherr \cite{Mu-00} refuted this by constructing a Coxeter group that admits two non-isomorphic irreducible Coxeter systems. This counterexample was achieved through a general geometric operation later formalized as diagram twisting \cite{BMMN-02}.

A Coxeter group $W$ is called {\it rigid} if all Coxeter systems of $W$ induce the same Coxeter graph up to isomorphism. It is called  {\it strongly rigid} if any two Coxeter generating sets are conjugate. The system $(W, S)$ is called {\it reflection rigid}, if for every Coxeter generating set $R \subset S^W$, there exists an automorphism $\alpha \in \Aut (W)$ such that $\alpha(R) = S$. Finally, the system $(W, S)$ is called {\it strongly reflection rigid}, if for any Coxeter generating set $R \subset S^W$, there exists an element $w \in W$ such that $R = wSw^{-1}$.

A Coxeter system $(W, S)$ is said to be {\it even} (respectively {\it odd}) if all the finite labels are even (respectively odd). Radcliffe \cite{Ra-01} showed that {\it strongly even} Coxeter systems (where all the finite labels are $2$ and/or multiples of $4$) are rigid. More generally, Bahls \cite{Ba-02} proved that a Coxeter group cannot admit two non-isomorphic even Coxeter systems. He \cite{Ba-04} also gave necessary and sufficient conditions for the strong rigidity of an even Coxeter group. {\it Large-type} Coxeter groups (where no finite labels are $2$) are reflection rigid up to diagram twisting \cite{MW-02}. Infinite irreducible Coxeter groups that do not contain the infinite dihedral group as a parabolic subgroup are strongly rigid \cite{CMu-07, FHM-06}.  Charney and Davis \cite{CD-00} established a significant geometric criterion for a strongly rigid Coxeter group: if a Coxeter group $W$ acts effectively, properly, and cocompactly on some contractible manifold, then $W$ is strongly rigid. For general Coxeter groups, an equivalent criterion for strong rigidity and strong reflection rigidity is given in \cite{HMN-18}.

The isomorphism problem and the study of automorphisms are deeply intertwined. While the former seeks the conditions under which different Coxeter systems yield the same abstract group, the latter investigates the internal symmetries of these groups. This conceptual bridge naturally leads to a systematic study of automorphisms of Coxeter groups. 

A Coxeter group is called {\it right-angled} (in short {\it RACG}) if all the finite labels are $2$. If there are no finite labels, it is referred to as a {\it universal} Coxeter group. A universal Coxeter group of rank $n$ is denoted by $\U_n$. Let $W$ be an RACG with $\Gamma$ as its Coxeter graph. Tits \cite{Ti-88} showed that $\Aut(W)=\Spe(W)\rtimes \Aut(F(\Gamma))$, where $\Spe(W)$ is the group of automorphisms of $W$ that preserves the conjugacy classes of involutions of $W$, and $F(\Gamma)$ is the groupoid of co-cliques of the Coxeter graph $\Gamma$. Further, he proved that if $W$ does not contain $\U_3$ as a parabolic subgroup, then $\Spe(W)=\Inn(W)$. For a finite rank irreducible RACG $W$, M\"uhlherr \cite{Mu-98} explicitly determined $\Spe(W)$, and proved that $\Spe(W) = \Inn(W)$ if and only if the neighbourhood of each vertex in $\Gamma$ is an independent subset. For $\U_n$, he \cite{Mu-96} proved that $\Aut(\U_n)=\Spe(\U_n)\rtimes \s_n$. Bahls \cite{Ba-06} proved that for an even, large-type Coxeter group $W$, whose finite diagram $\mathcal{V}$ has no vertex $v$ such that $\mathcal{V}\setminus \{v\}$ consists of more than two connected components, $\Aut(W)=\Spe(W)\rtimes \diag(W)$. Ryan \cite{Ry-07} obtained that for rigid Coxeter groups whose labels are all odd, $\Aut(W)=\Inn(W)\rtimes \diag(W)$. Caprace and Minasyan \cite{CM-13} studied parabolic-preserving automorphisms, and established criteria for determining when such automorphisms are inner-by-graph.

A comprehensive literature survey on the automorphisms and the isomorphism problem of the Coxeter groups is presented in \cite{Ba-05, Mu-06, Nu-14}. A few conjectures regarding the solutions of the reduced isomorphism problem (referred to as the Twist-Conjecture) are proposed in \cite{Mu-06}. Recent progress, along with counterexamples to the Twist Conjecture, can be found in \cite{CP-10, CP-11, HP-18, MM-08, Pr-21, RT-08, We-11}. Some quotient isomorphism invariants for finitely generated Coxeter groups have been investigated in \cite{MRT-08}, while a modern geometric approach to the isomorphism problem has recently been introduced in \cite{SRS-24}.

The primary aim of this paper is to contribute to the existing literature on the automorphism of Coxeter groups. Let $\sw$ denote the family of Coxeter groups $W$ admitting a finite diagram that is a star graph (the complete bipartite graph $K_{1, n}$). We restrict our focus to groups of rank $\ge 3$. The automorphisms of rank 1 (i.e., $\mathbb{Z}_2$) and rank 2 (i.e., dihedral groups) Coxeter groups are elementary, and their structures are well-established. Although the automorphisms of rank 3 Coxeter groups were described by Franzsen \cite{Fr-01, Fr-02}, we include the rank 3 case here to provide a unified treatment of the star-shaped diagram. While a subfamily of $\sw$ consisting of odd Coxeter groups was previously investigated in \cite{NS-21}, our approach provides a uniform characterization for the automorphism groups of all members within this broader class, including those with even labels. 

We begin by establishing the structural foundations for star-shaped Coxeter groups in Section 2, where we recall essential results on parabolic subgroups and the centralizers of generators. We also define three specific types of automorphisms: transvections, and two type partial conjugations. In Section 3, we focus on the interactions between transvections and partial conjugations, as well as the subgroups they generate. In Section 4, we present the explicit structural description of $\Aut (W)$ (Theorem $\ref{Automorphism group of star graph}$), and derive splitting criteria for the associated short exact sequence $1 \to \Inn (W) \to \Aut (W) \to \Out (W) \to 1$ (Theorem $\ref{Splitting-of-AutW}$). We conclude Section 4 by verifying the $R_\infty$-property (Theorem \ref{R-infinity Property}) for these groups using Moussong's criteria for hyperbolicity. Finally, Section 5 addresses the rigidity (Theorem $\ref{rigidity theorem for W(T)}$) of these systems, providing a comprehensive solution to the isomorphism problem for star-shaped Coxeter groups.

Throughout this paper, we adopt the following notational conventions. For a group $G$, $\Aut(G)$, $\Inn(G)$, and $\Out(G)$ denote the groups of all automorphisms, inner automorphisms, and outer automorphisms, respectively. The center of $G$ is denoted by $\Z(G)$. For any $g \in G$, $\widehat{g}$ represents the inner automorphism induced by $g$, defined by $\widehat{g} (x) = x^g = gxg^{-1}$, for all $x\in G$. For a subgroup $H$ of $G$, the centralizer of $g$ in $H$ is written as $\C_H(g)$. For a subset $X \subset H$, the normal closure of $X$ in $H$ is denoted by $\langle\langle X \rangle\rangle_H$. For some positive integer $n$, we denote by $G^n$ the direct product of $n$ copies of $G$. We denote the cyclic group of order $n$ by $\mathbb{Z}_n$, the multiplicative group of relatively prime integers modulo $n$ by $\mathbb{Z}_n^\times$, the symmetric group on $n$ symbols by $S_n$, and the dihedral group of order $2m$ by $D_{2m}$.

\section{Structural Foundations}\label{Structural Foundations}
Let $\sw$ be the family of star-shaped Coxeter groups of finite rank $\geq 3$. For any $W \in \sw$, the star-shaped structure of the associated finite diagram allows for a specific indexing of the generators that is central to our work.  We fix a generating set $S = \{s_0, s_1, \dots, s_n\}$, where $s_0$ denotes the central vertex and $s_1,s_2,
\ldots, s_n$ denote the leaf vertices. For convenience, we define the index sets $I := \{1, 2, \dots, n\}$ and $\bar{I}=:I\cup \{0\}$. Under this convention, $W$ admits the following presentation:
\begin{equation}\label{presentation of star graph}
    W := \langle S  \mid s_i^2=1=(s_0s_{j})^{m(s_0,s_j)}, i\in \bar{I}, j\in I \rangle.
\end{equation}
For simplicity, we write $m_j$ to denote the label $m(s_0,s_j)$. We consistently adopt the presentation $\eqref{presentation of star graph}$ for any Coxeter group $W \in \sw$ throughout the article.  To facilitate our computations, we partition the index set $I$ based on the parity of the edge labels $m_i$. 
$$I_{\odd} := \{i \in I \mid m_i~\text{is odd}\}, \quad \text{and} \quad I_{\even} := \{j \in I \mid m_j~\text{is even}\}.$$
We denote the distinct odd labels by $m_{o,1}, \dots, m_{o,h_o}$ with multiplicities $\alpha_1, \dots, \alpha_{h_o}$, and the distinct even labels by $m_{e,1}, \dots, m_{e,h_e}$ with multiplicities $\beta_1, \dots, \beta_{h_e}$.  Defining $n_o := |I_{\odd}|$ and $n_e := |I_{\even}|$, we have the following relations:
\begin{equation*}
\sum_{k=1}^{h_o} \alpha_k = n_o, \quad \sum_{l=1}^{h_e} \beta_l = n_e, \quad \text{and} \quad n_o + n_e = n.
\end{equation*}

\begin{center}
\begin{figure}[ht]
    \centering      
   \begin{tikzpicture}
	\begin{pgfonlayer}{nodelayer}
		\node [style=new style 0, scale=0.5, color=black] (0) at (0, 0) {};
		\node [style=new style 0, scale=0.5, color=black] (1) at (0, 2.75) {};
		\node [style=new style 0, scale=0.5, color=black] (2) at (2, 1.5) {};
		\node [style=new style 0, scale=0.5, color=black] (3) at (2.25, -0.75) {};
		\node [style=new style 0, scale=0.5, color=black] (4) at (0.75, -2.25) {};
		\node [style=new style 0, scale=0.5, color=black] (6) at (-2.5, -0.25) {};
		\node [style=new style 0, scale=0.5, color=black] (7) at (-2, 1.75) {};
		\node [style=none] (8) at (-0.5, -2.5) {$\cdot$};
		\node [style=none] (9) at (-1.5, -2) {$\cdot$};
		\node [style=none] (10) at (-2.25, -1.25) {$\cdot$};
		\node [style=none] (11) at (0.5, 2.75) {$s_1$};
		\node [style=none] (12) at (1.75, 1.75) {$s_2$};
		\node [style=none] (13) at (2.5, -0.45) {$s_3$};
		\node [style=none] (14) at (1, -2) {$s_4$};
		\node [style=none] (16) at (-2.75, 0.05) {$s_{n-1}$};
		\node [style=none] (17) at (-1.75, 2) {$s_{n}$};
		\node [style=none] (18) at (0.35, 1.6) {$m_1$};
		\node [style=none] (19) at (1.35, 0.65) {$m_2$};
		\node [style=none] (20) at (1.25, -0.65) {$m_3$};
		\node [style=none] (21) at (0.07, -1.35) {$m_4$};
		\node [style=none] (23) at (-0.9, 1.2) {$m_{n}$};
		\node [style=none] (24) at (-1.3, 0.1) {$m_{n-1}$};
		\node [style=none] (26) at (0.25, 0.45) {$s_0$};
		\node [style=none] (27) at (-1.25, -0.75) {};
		\node [style=none] (28) at (-1, -1.25) {};
		\node [style=none] (29) at (-0.25, -1.25) {};
	\end{pgfonlayer}
	\begin{pgfonlayer}{edgelayer}
		\draw (0) to (1);
		\draw (0) to (2);
		\draw (0) to (3);
		\draw (0) to (4);
		\draw (0) to (6);
		\draw (0) to (7);
		\draw [dotted] (0) to (27.center);
		\draw [dotted] (0) to (28.center);
		\draw [dotted] (0) to (29.center);
	\end{pgfonlayer}
\end{tikzpicture}
 \end{figure}
 \end{center}

Since the automorphisms of odd Coxeter groups (those with only odd edge labels, i.e., $n_e = 0$) were investigated in \cite{NS-21}, we henceforth assume the existence of at least one even edge label ($n_e \geq 1$). 

Diagram automorphisms of $W$ are precisely the permutations of the leaf vertices that preserve the labels of the edges incident to $s_0$. For convenience, we identify each graph automorphism $\rho$ of $\mathcal{V}_{(W,S)}$ with its induced group automorphism in $\Aut(W)$. Let $\diag_o$ and $\diag_e$ denote the subgroups of diagram automorphisms that act on the leaf vertices with odd-labeled and even-labeled edges, respectively, while fixing the remaining leaf vertices. The group $\diag(W)$ of diagram automorphisms of $W$ then admits the following direct product decomposition.

\begin{lemma}\label{structure of diagonal automorphisms}
    $\diag(W) =\diag_o\times\diag_e \cong \prod_{k=1}^{h_o} \s_{\alpha_k} \times \prod_{l=1}^{h_e} \s_{\beta_l}.$
\end{lemma}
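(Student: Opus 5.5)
The plan is to identify $\diag(W)$ with the automorphism group of the labelled finite diagram $\mathcal{V}_{(W,S)}$, and then compute that group combinatorially.

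First, I will show that every diagram automorphism fixes $s_0$. Since $n \geq 2$ (rank $\geq 3$), the central vertex $s_0$ has degree $n \geq 2$ in $\mathcal{V}_{(W,S)}$. Every leaf $s_j$ has degree $1$, so any graph automorphism preserves degree and hence fixes $s_0$. Consequently it restricts to a permutation $\rho$ of $\{s_1,\dots,s_n\}$. Because it preserves edge labels, $m_{\rho(j)} = m_j$ for every $j \in I$, so $\rho$ maps each label class $\{j \in I \mid m_j = c\}$ to itself.

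Conversely, take any permutation of $I$ that preserves every label class and extend it by fixing $s_0$. This is a label-preserving graph automorphism, and by the discussion in the introduction it extends to an automorphism of $W$, because it sends relators to relators and its inverse does too. The identification of graph automorphisms with their induced group automorphisms is injective, since distinct permutations of $S$ give distinct maps on the generators. Therefore $\diag(W)$ is isomorphic to the group of permutations of $I$ preserving the partition of $I$ into label classes, which is $\prod_{\text{classes}} \s_{|\text{class}|}$.

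The label classes are the $h_o$ odd classes of sizes $\alpha_1,\dots,\alpha_{h_o}$ and the $h_e$ even classes of sizes $\beta_1,\dots,\beta_{h_e}$. This gives the isomorphism with $\prod_{k=1}^{h_o}\s_{\alpha_k}\times\prod_{l=1}^{h_e}\s_{\beta_l}$.

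For the internal decomposition, note that $\diag_o$ consists of the permutations supported on $I_{\odd}$ and $\diag_e$ of those supported on $I_{\even}$. These two sets are disjoint, so:
\begin{itemize}
\item elements of $\diag_o$ and $\diag_e$ commute;
\item $\diag_o \cap \diag_e = \{\Id\}$;
\item every label-preserving permutation $\rho$ preserves $I_{\odd}$ and $I_{\even}$, so it factors uniquely as (its restriction to $I_{\odd}$) $\circ$ (its restriction to $I_{\even}$).
\end{itemize}
Hence $\diag(W) = \diag_o \times \diag_e$, with $\diag_o \cong \prod_k \s_{\alpha_k}$ and $\diag_e \cong \prod_l \s_{\beta_l}$.

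The only point needing care is that $s_0$ is fixed. This fails for rank $2$, where $\mathcal{V}$ is a single edge and the swap is possible, and it is exactly why the rank $\geq 3$ hypothesis is used. Everything else is routine bookkeeping.
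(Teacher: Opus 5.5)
Your proposal is correct and follows the paper's approach. The paper states the lemma as immediate from the observation that diagram automorphisms are exactly the label-preserving permutations of the leaf vertices with $s_0$ fixed; these then split along the label classes into odd and even parts. Your degree argument (the centre has degree $n \geq 2$, each leaf has degree $1$) supplies the justification, left implicit in the paper, that $s_0$ must be fixed.
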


The center of any infinite irreducible Coxeter group is trivial \cite[Lemma 2.16]{Fr-01}. This immediately yields the following result:

\begin{lemma}\label{W has trivial center} 
For any $W\in \sw$, the center $\Z(W)$ is trivial.  Consequently, $\Inn(W)\cong W$.
\end{lemma}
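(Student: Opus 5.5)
The plan is to reduce the statement to the cited fact \cite[Lemma 2.16]{Fr-01}: an infinite irreducible Coxeter group has trivial center. Two things need checking: that every $W \in \sw$ is irreducible, and that it is infinite. The isomorphism $\Inn(W)\cong W$ then follows formally, since $g\mapsto \widehat{g}$ is a surjective homomorphism $W\to\Inn(W)$ with kernel exactly $\Z(W)$.

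\emph{Irreducibility.} In the presentation \eqref{presentation of star graph}, every label $m_j$ for $j\in I$ is finite and satisfies $m_j\ge 2$. To show irreducibility, I would argue that the Coxeter graph $\Gamma_{(W,S)}$ is connected. Two leaves $s_i, s_j$ with $i\neq j$ are not joined in the finite diagram, so $m(s_i,s_j)=\infty\ge 3$, and hence they are joined in $\Gamma_{(W,S)}$. Because $n\ge 2$ (the rank is at least $3$), the leaves form a complete subgraph on at least two vertices. It remains to attach $s_0$. If some $m_j\ge 3$, then $s_0$ is adjacent to $s_j$. If instead every $m_j=2$, then $s_0$ commutes with all the leaves, and $\Gamma_{(W,S)}$ has $s_0$ as an isolated vertex.

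This all-$2$ case is the one genuine obstacle. There $W\cong \mathbb{Z}_2\times \U_n$, which has nontrivial center $\langle s_0\rangle$, so the statement as written fails. I would therefore treat it by assuming, as the paper implicitly does, that some label exceeds $2$, or else note it as an excluded degenerate case. Under that assumption $\Gamma_{(W,S)}$ is connected, so $(W,S)$ is irreducible.

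\emph{Infiniteness.} Since $n\ge 2$, the elements $s_1$ and $s_2$ satisfy $m(s_1,s_2)=\infty$. By the standard fact that $s_1s_2$ has order $m(s_1,s_2)$ in a Coxeter group, $s_1s_2$ has infinite order, so $W$ is infinite.

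With both hypotheses verified, \cite[Lemma 2.16]{Fr-01} gives $\Z(W)=1$, and the first isomorphism theorem applied to $W\to\Inn(W)$ yields $\Inn(W)\cong W$.
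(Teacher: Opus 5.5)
Your proposal is correct and follows the same route as the paper. The paper simply invokes Franzsen's lemma that an infinite irreducible Coxeter group is centerless. You additionally verify the two hypotheses the paper leaves unstated: infiniteness via $m(s_1,s_2)=\infty$, and irreducibility via connectivity of the Coxeter graph.

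Your caveat is also right. If every $m_j=2$, a case not excluded by the paper's standing assumption $n_e\ge 1$, then $W\cong \mathbb{Z}_2\times \U_n$ has center $\langle s_0\rangle$ and $\Inn(W)\cong \U_n\not\cong W$. So the lemma genuinely needs the extra hypothesis that some label exceeds $2$.
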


We define the index set $I_2:= \{i \in I \mid m_i = 2\} \subseteq  I_{\even}$, with cardinality $n_2$, to identify the leaf generators that commute with the central vertex. This categorization is essential for analyzing the parabolic subgroups of $W$. To this end, we recall the following results from the literature:

\begin{lemma}\label{finite subgroup is contained in finite parabolic} \cite[Ex.2d, p.130]{Bo-02} Finite subgroups of an infinite Coxeter group are contained in some finite parabolic subgroup. 
\end{lemma}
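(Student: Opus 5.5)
The lemma is a classical fact cited from Bourbaki, so the natural route is through the geometry of the Davis complex (or the Tits cone) of $(W,S)$. I would use the Davis complex $\Sigma$. First recall that $\Sigma$ is a complete CAT(0) space on which $W$ acts properly and cocompactly by isometries. Its cells correspond to cosets $wW_T$ with $T\subseteq S$ spherical, i.e.\ $W_T$ finite.

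Let $H\le W$ be finite. By the Bruhat--Tits fixed point theorem, a finite group acting by isometries on a complete CAT(0) space has a fixed point. Concretely, take the circumcentre of any $H$-orbit. So $H$ fixes some point $x\in\Sigma$.

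Next I would identify point stabilizers. The point $x$ lies in the relative interior of a unique cell of $\Sigma$, which by construction has the form $wC_T$ for the cell $C_T$ attached to a spherical subset $T$. The action of $W$ is \emph{without inversions}: an element preserving a cell fixes it pointwise. Hence the stabilizer of $x$ is the stabilizer of that cell, namely $wW_Tw^{-1}$. Therefore $H\le wW_Tw^{-1}$, and $wW_Tw^{-1}$ is a finite parabolic subgroup since $T$ is spherical.

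The main obstacle is justifying this stabilizer computation. One must verify from the definition of $\Sigma$ as $W\times K/\sim$ that $\mathrm{Stab}(x)=wW_{S(x)}w^{-1}$, where $S(x)$ is the set of generators whose mirrors contain the chosen representative. This follows from the defining equivalence relation, since $(w,k)\sim(w',k')$ iff $k=k'$ and $w^{-1}w'\in W_{S(k)}$.

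An alternative avoids CAT(0) geometry and uses Tits' theorem instead. For the contragredient action on the Tits cone, every finite subgroup has a fixed point in the interior, obtained by averaging an interior point over the orbit. Stabilizers of points of the Tits cone are parabolic subgroups conjugate to $W_T$, and interior points have finite stabilizers. Either route gives the lemma. Infiniteness of $W$ is not actually needed, and in the star-shaped setting the relevant finite parabolics are conjugates of $W_{\{s_0,s_j\}}$, of rank-one subgroups, or of $W_{\{s_i: i\in I\}\cap T}$ with $T$ spherical.
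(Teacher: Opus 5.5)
Your argument is correct. Note that the paper gives no proof of this lemma; it simply quotes Bourbaki's exercise. The intended solution there is essentially the Tits-cone argument you list as your ``alternative''. You average a point of the open fundamental chamber over its $H$-orbit in the contragredient representation. The average lies in the interior of the Tits cone, which is convex and $W$-stable. Its stabilizer is a conjugate $wW_Tw^{-1}$, and it is finite because the point is interior.

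Your primary route through the Davis complex $\Sigma$ is the standard geometric proof and is logically sound, but it costs more. You need $\Sigma$ to be a complete CAT(0) space: Moussong's theorem for the piecewise Euclidean metric, or Gromov's link condition for the cubical metric. The Tits-cone proof needs only linear algebra plus the fact that interior points of the Tits cone have finite stabilizers. What the CAT(0) route buys is portability. The same fixed-point argument works verbatim for any group acting properly by isometries on a complete CAT(0) space whose point stabilizers are understood.

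One step should be corrected. You describe $\Sigma$ with cells indexed by cosets $wW_T$ (the Coxeter-cell structure). For that structure the action is \emph{not} without inversions. A reflection in $W_T$ preserves the cell $C_T$ (a $2m$-gon when $W_T\cong D_{2m}$) without fixing it pointwise, and a generic interior point of $wC_T$ has trivial stabilizer. So ``the stabilizer of $x$ is the stabilizer of that cell'' is false. What is true, and all you need, is
\[
H\le \mathrm{Stab}_W(x)\le \mathrm{Stab}_W(wC_T)=wW_Tw^{-1},
\]
since $H$ fixes $x$ and therefore maps the open cell containing $x$ to itself. Your second formulation, $\mathrm{Stab}_W([w,k])=wW_{S(k)}w^{-1}$ via the basic construction, is correct and cleaner. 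Just add explicitly that $S(k)$ is spherical, because $\bigcap_{s\in T}K_s\neq\emptyset$ only when $W_T$ is finite.

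Your closing remark about the star-shaped case is harmless but redundant. There the only spherical subsets are $\emptyset$, singletons, and the pairs $\{s_0,s_j\}$, so your third family collapses to parabolics of rank at most one.
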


\begin{lemma}\label{maximals are not conjugate}
\cite[Lemma 11 and Corollary 12]{FH-03} Maximal finite standard parabolic subgroups are not conjugate to subgroups of any other finite standard parabolic subgroup; consequently, they are also maximal finite subgroups.
\end{lemma}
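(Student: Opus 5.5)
The statement is \cite[Lemma 11 and Corollary 12]{FH-03}, so in the paper it is simply quoted. If I were to prove it myself, I would use Lemma \ref{finite subgroup is contained in finite parabolic} together with the standard fact that an inclusion of parabolic subgroups forces an inclusion of their defining subsets. Let $W_J$ be a maximal finite standard parabolic subgroup, meaning $W_J$ is finite and no standard parabolic $W_K$ with $J \subsetneq K$ is finite. Suppose $wW_Jw^{-1} \subseteq W_K$ for some finite standard parabolic $W_K$ and some $w\in W$. I want to show $K=J$ and $w\in W_J$, or at least $wW_Jw^{-1}=W_K$.

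The first step is to reduce to the case $w=1$ using minimal double coset representatives. Write $w = u\,d\,v$ with $u\in W_K$, $v\in W_J$, and $d$ the minimal length element of $W_K w W_J$. Then $dW_Jd^{-1}\subseteq W_K$. By Kilmoyer's theorem (Bourbaki, Ch.~IV, Ex.~\S1.3), $W_K\cap dW_Jd^{-1} = W_{K\cap dJd^{-1}}$ with $K\cap dJd^{-1}\subseteq S$. Since $dW_Jd^{-1}\subseteq W_K$, this intersection is all of $dW_Jd^{-1}$. Hence $dW_Jd^{-1}=W_L$ with $L = dJd^{-1}\subseteq K\cap S$, and $|L|=|J|$.

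The main obstacle is the next step: showing that this conjugation forces $J\subseteq K$, so that maximality applies. The tool I would use is Deodhar's description: if $dJd^{-1}=L\subseteq S$ with $d$ minimal, then $d$ factors as a product of elementary moves $w_0(J'\cup\{s\})w_0(J')$. Each such move requires $W_{J'\cup\{s\}}$ to be finite and to contain the current subset. The first move would then produce a finite standard parabolic $W_{J\cup\{s\}}$ strictly containing $W_J$, which contradicts maximality. So $d=1$ and $J=L\subseteq K$. Maximality and finiteness of $W_K$ then give $K=J$, and so $wW_Jw^{-1}=W_J$.

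For the consequence, let $F$ be a finite subgroup with $W_J\subseteq F$. By Lemma \ref{finite subgroup is contained in finite parabolic}, $F\subseteq xW_Kx^{-1}$ for some finite standard $W_K$. Then $x^{-1}W_Jx\subseteq W_K$, so by the first part $x^{-1}W_Jx = W_K$. Hence $F\subseteq xW_Kx^{-1}=W_J$, which shows $W_J$ is a maximal finite subgroup.
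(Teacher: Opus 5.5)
The paper gives no proof of this statement; it quotes it from Franzsen--Howlett. Your proposal therefore supplies a proof rather than competing with one, and it is correct.

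Reducing $w=udv$ to the minimal $(W_K,W_J)$-double coset representative $d$ and applying Kilmoyer's theorem gives $dW_Jd^{-1}=W_{K\cap dJd^{-1}}$. The equality $K\cap dJd^{-1}=dJd^{-1}$, which you assert, needs its one-line justification: $d^{-1}(K\cap dJd^{-1})d\subseteq J$ generates $W_J$, and $W_X\cap S=X$ for every $X\subseteq S$. Since $d$ is minimal in $dW_J$ and conjugates $J$ into $S$, it maps $\Pi_J$ onto $\Pi_L$. So the Deodhar/Brink--Howlett factorization applies. Every elementary move $w_0(J'\cup\{s\})w_0(J')$ lives in a finite standard parabolic strictly containing its source, and maximality of $W_J$ rules this out already at the first move. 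Hence $d=1$, $J=L\subseteq K$, and so $K=J$. The passage to maximality among all finite subgroups via Lemma~\ref{finite subgroup is contained in finite parabolic} is also fine; for finite $W$ everything is trivial.

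Compared with the bare citation, your route makes the mechanism visible: a standard parabolic can only be conjugated onto another standard one through finite overgroups $W_{J\cup\{s\}}$. It also gives a little more than the statement, namely $w\in W_J$, so $N_W(W_J)=W_J$.

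You could lighten it further, since only the first step of Deodhar's theorem is used and that step has a short direct proof. Suppose $d\neq 1$ and $d\Pi_J=\Pi_L$, and choose $t$ with $d\alpha_t<0$, so $t\notin J$. Then $d\alpha_t\notin\mathrm{span}\,\Pi_L$, so it has a negative coefficient at some $\alpha_r$ with $r\notin L$. Consequently every $\beta=c\alpha_t+\gamma\in\Phi^+_{J\cup\{t\}}\setminus\Phi_J$, with $c>0$ and $\gamma\in\mathrm{span}\,\Pi_J$, is sent to a negative root. These roots lie in the finite inversion set of $d$, so $W_{J\cup\{t\}}$ is finite, contradicting maximality.
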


Together, these results characterize the maximal finite subgroups of $W$ as follows:

\begin{lemma}\label{maximal parabolic subgroups are not conjugate}
\begin{enumerate}
\item[(i)] The maximal finite standard parabolic subgroups of $W$ are $W_i:=\langle s_0,s_i\rangle$, for all $i\in I$, where $$W_i \cong \begin{cases} 
\mathbb{Z}_2 \times \mathbb{Z}_2, &  \text{ if } i\in I_2,\\
D_{2m_i},& \: \text{if } i\in I\setminus I_2.
\end{cases}$$

\item[(ii)] Whenever $i\neq j$, the parabolic subgroups $W_i$ and $W_j$ are not conjugate.
 
\item[(iii)] Each $W_i$ contains a unique maximal element $\Delta_i$, defined by:
$$\Delta_i:=\begin{cases}
s_i(s_0s_i)^{\frac{m_i-1}{2}},& \text{if $i\in I_\odd$},\\
(s_0s_i)^{\frac{m_i}{2}},& \text{if $i\in I_\even$}.
\end{cases}$$
Furthermore, $\Delta_i$ lies in the center $\Z(W_i)$ if and only if $i \in I_{\even}$.
\end{enumerate}
\end{lemma}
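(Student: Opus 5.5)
For part (i), I will first list the finite standard parabolic subgroups. A subset $J \subseteq S$ generates a finite $W_J$ only if every pair in $J$ has a finite label, i.e.\ $J$ is a clique in the finite diagram $\mathcal{V}_{(W,S)}$. Since $\mathcal{V}_{(W,S)}$ is the star $K_{1,n}$, its cliques are the singletons, the empty set, and the edges $\{s_0,s_i\}$ for $i \in I$. Each edge does give a finite group: $\langle s_0, s_i\rangle$ is the dihedral group of order $2m_i$, and $D_4 \cong \mathbb{Z}_2\times\mathbb{Z}_2$ when $m_i=2$. Every singleton lies in some edge, and since $n\ge 2$ no edge contains another. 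Hence the $W_i$ are exactly the maximal finite standard parabolic subgroups, with the stated isomorphism types.

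For part (ii), I will apply Lemma \ref{maximals are not conjugate} directly. For $i\neq j$, $W_i$ and $W_j$ are distinct maximal finite standard parabolic subgroups. By that lemma, $W_i$ is not conjugate to a subgroup of any other finite standard parabolic subgroup, and in particular not to $W_j$. The only point to check is that $W_i \ne W_j$ as subgroups. This follows because standard parabolic subgroups determine their generating subsets: $W_I\cap S = I$, a standard fact from the word-length or deletion condition. A parity argument also works: the abelianization map $s_k\mapsto$ the $k$-th coordinate shows $s_j\notin W_i$ whenever $m_i$ or $m_j$ is even; I will just cite the general fact.

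For part (iii), I will use the standard description of the longest element of a finite dihedral Coxeter group $\langle s_0,s_i\rangle$ of order $2m$. It is the unique element of length $m$, namely the alternating word of length $m$, and it is unique of maximal length.
\begin{itemize}
\item If $m$ is even, this word is $(s_0s_i)^{m/2}$.
\item If $m$ is odd, it is $s_i s_0 s_i\cdots s_i$ with $m$ letters, which equals $s_i(s_0s_i)^{(m-1)/2}$.
\end{itemize}
For centrality:
\begin{itemize}
\item If $m$ is even, $(s_0s_i)^{m/2}$ is the unique involution in the rotation subgroup $\langle s_0s_i\rangle\cong\mathbb{Z}_m$. Conjugation by $s_0$ inverts $s_0s_i$, so it fixes this involution; hence it is central. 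This also holds when $m=2$, where the group is abelian.
\item If $m$ is odd, $\Delta_i$ is a reflection, which conjugates $s_0$ to $s_i$. Equivalently, the center of $D_{2m}$ is trivial for $m$ odd. Either way $\Delta_i\notin \Z(W_i)$.
\end{itemize}

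I expect no real obstacle here. The only delicate point is the identification of the maximal element and the claim that it is unique, which rests on the standard theory of the longest element $w_0$ in a finite Coxeter group, and I will cite that theory.
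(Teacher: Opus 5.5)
Your proposal is correct and follows the route the paper implicitly takes. The paper gives no separate proof: it reads (i) off the star-shaped finite diagram, derives (ii) from the Franzsen--Howlett result (Lemma \ref{maximals are not conjugate}), and treats (iii) as the standard description of the longest element of a finite dihedral group. One small slip in your optional parity aside: when $m_j$ is odd and $m_i$ is even, $s_j$ has the same image as $s_0\in W_i$ in the abelianization, so that argument shows $s_i\notin W_j$ rather than $s_j\notin W_i$. This still gives $W_i\neq W_j$, and since you rely on $W_J\cap S=J$ anyway, nothing is affected.
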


The centralizers of generators in any Coxeter groups were characterized by  Brink \cite[Theorem, p.466]{Br-96} and Bahls \cite[Theorem 2.5]{Ba-05} in terms of root systems, and specific paths and loops in the associated Coxeter diagram, respectively. Applying these characterization to star-shaped Coxeter group $W\in \sw$, we obtain the following explicit structures:

\begin{lemma}\label{structure of the centralizer for s0}
Let $W \in \sw$. Then, for each $s_i\in S$, the centralizer $\C_W(s_i)$ is a RACG. In particular,
$$\C_W(s_i)=
\begin{cases}
\langle s_i \rangle  \cong \mathbb{Z}_2, & \text{if}\; i\in I_\odd,\\
\langle s_i \rangle \times \langle \Delta_i \rangle  \cong \mathbb{Z}_2\times \mathbb{Z}_2, &\text{if}\; i\in I_\even,\\
\langle  s_0 \rangle \times \langle \Delta_j\mid j\in I_\even \rangle  \cong \mathbb{Z}_2\times \U_{n_e}, &\text{if}\; i=0.\\
\end{cases}$$
\end{lemma}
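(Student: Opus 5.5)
The plan is to apply Brink's description of centralizers of reflections (equivalently Bahls' path-and-loop formulation) to the star diagram. Recall the statement being used: for $s\in S$, $\C_W(s)=\langle s\rangle \times R_s$. Here $R_s$ is the reflection subgroup generated by the reflections attached to the following data.

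\begin{enumerate}
\item[(a)] Edges $\{t,u\}$ with $m(t,u)$ even, where $t$ is reachable from $s$ by a path of odd-labelled edges. Such an edge contributes the conjugate, along that path, of the longest element of $\langle t,u\rangle$, when that element commutes with $t$.
\item[(b)] Closed odd-labelled paths based at $s$.
\end{enumerate}

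The subgroup $R_s$ is a Coxeter group, in fact right-angled or free on these generators. This is why the lemma claims each $\C_W(s_i)$ is a RACG. Since the finite diagram $\mathcal{V}_{(W,S)}$ is a tree, it contains no cycles. So contribution (b) reduces to backtracking loops, which give trivial elements. The whole computation therefore becomes bookkeeping of odd-labelled paths together with adjacent even edges.

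\textbf{Case $i\in I_\even$.} The leaf $s_i$ has a single incident edge, and its label is even. Hence no odd path leaves $s_i$. The only contribution is the edge $\{s_i,s_0\}$ itself, giving $\Delta_i=(s_0s_i)^{m_i/2}$, which lies in $\Z(W_i)$ by Lemma \ref{maximal parabolic subgroups are not conjugate}(iii). This yields $\langle s_i\rangle\times\langle\Delta_i\rangle\cong\mathbb{Z}_2\times\mathbb{Z}_2$. For the reverse inclusion, I would note that any element centralising $s_i$ normalises the finite subgroup $\langle s_i,\Delta_i\rangle$. By Lemma \ref{finite subgroup is contained in finite parabolic} and Lemma \ref{maximals are not conjugate}, this forces it into $W_i$. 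Inside $D_{2m_i}$ the centralizer of a reflection is exactly $\{1,s_i,\Delta_i,s_i\Delta_i\}$.

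\textbf{Case $i=0$.} Every even edge $\{s_0,s_j\}$, $j\in I_\even$, is incident to $s_0$, so each contributes $\Delta_j$. Since the star is a tree, there are no relations among distinct $\Delta_j$ beyond $\Delta_j^2=1$. Thus $\langle\Delta_j\mid j\in I_\even\rangle\cong \U_{n_e}$. To confirm freeness directly, I would use a normal-form (reduced word) argument in the free-product-with-amalgamation decomposition $W=\ast_{\langle s_0\rangle} W_j$.

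\textbf{Case $i\in I_\odd$.} Here the computation is the delicate step, and I expect it to be the main obstacle. The edge $\{s_i,s_0\}$ has odd label, so $s_0$ is reachable from $s_i$ by an odd path. Then $s_i$ is conjugate to $s_0$ by $\Delta_i$, since $\Delta_i s_0\Delta_i^{-1}=s_i$ in $D_{2m_i}$. Consequently Brink's recipe transports all of the even edges at $s_0$ to $s_i$. I would carry out the path bookkeeping explicitly, using the amalgam normal form, and check which transported elements $\Delta_i\Delta_j\Delta_i^{-1}$ are nontrivial.

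That check must be done before concluding $\C_W(s_i)=\langle s_i\rangle$. The conclusion $\langle s_i\rangle$ holds exactly when no even edge is reachable along odd paths from $s_i$. If the check shows otherwise, the stated formula must be reconciled with the conjugation relation $\C_W(s_i)=\Delta_i\C_W(s_0)\Delta_i^{-1}$, and that reconciliation is where I expect the difficulty.
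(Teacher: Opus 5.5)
Your route is the paper's route. The paper gives no argument beyond saying that Brink's and Bahls' descriptions of reflection centralizers, specialized to the star, yield the three formulas. For $i\in I_\even$ and $i=0$ that specialization does give the stated groups.

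Your extra reverse-inclusion argument for $i\in I_\even$ is circular. You cannot know that an element centralizing $s_i$ normalizes $\langle s_i,\Delta_i\rangle$ until you know where it sends $\Delta_i$. Even granting normalization, the cited lemmas do not force the element into $W_i$. Brink's theorem already gives equality, so you can drop that argument.

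The real problem is the case $i\in I_\odd$ that you left open. The check you postponed goes against the statement, and there is nothing to reconcile. Under the paper's standing hypothesis $n_e\geq 1$, the $I_\odd$ line of the lemma is false.

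Here is why. $\Delta_i$ is an involution with $\Delta_i s_0\Delta_i=s_i$, so conjugation by $\Delta_i$ maps $\C_W(s_0)$ isomorphically onto $\C_W(s_i)$. Take any $j\in I_\even$. The element $\Delta_i\Delta_j\Delta_i$ commutes with $s_i$. It lies in $\langle s_i\rangle=\Delta_i\langle s_0\rangle\Delta_i$ only if $\Delta_j\in\{1,s_0\}$. That is impossible, since $\Delta_j$ has length $m_j\geq 2$.

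For a concrete case, take $m_1=3$ and $m_2=4$. Then $\C_W(s_1)=\langle s_1,\Delta_1\Delta_2\Delta_1\rangle\cong\mathbb{Z}_2\times\mathbb{Z}_2$, not $\mathbb{Z}_2$.

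So, exactly as you suspected, the even edges at $s_0$ do contribute after being transported along the odd edge $\{s_0,s_i\}$. The correct formula is
\[
\C_W(s_i)=\langle s_i\rangle\times\langle \Delta_i\Delta_j\Delta_i \mid j\in I_\even\rangle\cong \mathbb{Z}_2\times\U_{n_e}\qquad (i\in I_\odd).
\]
This agrees with $\langle s_i\rangle$ only when $n_e=0$, which is the all-odd case the paper excludes. The paper's one-line appeal to Brink and Bahls overlooks precisely these contributions.

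Consequently, no completion of your proposal, or of any other argument, can prove the lemma as written. What you can prove is the corrected statement above. It is still a right-angled Coxeter group, so the first sentence of the lemma survives.
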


We denote the subgroup $\langle \Delta_j\mid j\in I_\even\rangle$ of $\C_W(s_0)$ by $\Delta$, which is isomorphic to $\U_{n_e}$. A few observations can be readily verified: 
\begin{equation*}
     \C_W(s_0)\cap W_i=\begin{cases}
        \langle s_0\rangle, & \text{if $i\in I_\odd$},\\
        \langle s_0\rangle \times \langle \Delta_i\rangle, & \text{if $i\in I_\even$},
\end{cases} \quad \text{and} \quad \Delta\cap W_i=\begin{cases}
        1, & \text{if $i\in I_\odd$},\\
        \langle \Delta_i\rangle, & \text{if $i \in I_\even$}, 
\end{cases}
\end{equation*} 
i.e., for each $i\in I$, $\Delta\cap W_i\leq \C_W(s_0)\cap\C_W(s_i)$.

In any Coxeter system, two generators are conjugate if and only if they are connected by a path of odd-labeled edges \cite[Lemma 3.6]{BMMN-02}. For our star-shaped Coxeter groups $W\in \sw$, the central vertex $s_0$ is connected to a leaf $s_i$ by an odd-labeled edge if and only if $i\in I_\odd$, and no edges exist between distinct leaf vertices. This observation leads to the following result:

\begin{lemma}\label{conjugacy lemma through odd graphs}
Let $W\in \sw$ be a star-shaped Coxeter group with the generating set $S$. Then $s_0$ is conjugate to $s_i$ if and only if $i\in I_\odd$, whereas for each $j \in I_\even$, $s_j$ is not conjugate to any other generator in $S$. Specifically, the partition of $S$ into conjugacy classes consists of the set  $\{s_i \mid i \in \bar{I}\setminus I_\even\}$, and the singletons $\{s_j\}$ for each $j \in I_\even$.
\end{lemma}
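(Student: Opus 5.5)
The statement is a direct specialization of the cited criterion \cite[Lemma 3.6]{BMMN-02}: two generators of a Coxeter system are conjugate if and only if they are joined by a path in the \emph{odd subgraph} of the Coxeter diagram, whose vertex set is $S$ and whose edges are the pairs $\{s,t\}$ with $m(s,t)$ odd. So the plan is simply to identify this odd subgraph for the presentation \eqref{presentation of star graph} and read off its connected components.

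First, I will determine the edges. By the presentation, the only pairs with finite label are $\{s_0,s_j\}$ with $j\in I$; all pairs of distinct leaves have label $\infty$, which is not odd. Hence the odd subgraph has exactly the edges $\{s_0,s_i\}$ with $i\in I_\odd$. It is therefore a star centred at $s_0$ with leaves $s_i$, $i\in I_\odd$, together with the isolated vertices $s_j$, $j\in I_\even$.

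Next, I will read off the components: one is $\{s_0\}\cup\{s_i\mid i\in I_\odd\}=\{s_i\mid i\in\bar I\setminus I_\even\}$, and each $s_j$ with $j\in I_\even$ forms a singleton component. Applying the criterion then gives all the claims at once: $s_0\sim s_i$ exactly when $i\in I_\odd$, each $s_j$ with $j\in I_\even$ is conjugate to no other generator, and the conjugacy classes meeting $S$ partition it as stated.

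For a self-contained sanity check of the nontrivial directions, I would argue directly. If $m_i$ is odd, then in the dihedral group $W_i$ the element $(s_0s_i)^{(m_i-1)/2}$ conjugates $s_i$ to $s_0$. Non-conjugacy follows from the homomorphism $W\to\mathbb{Z}_2$ sending $s_j\mapsto 1$ and every other generator to $0$. This map is well defined when $m_j$ is even, since the relation $(s_0s_j)^{m_j}=1$ maps to $m_j\cdot 1=0$. Because the map is constant on conjugacy classes, it separates $s_j$ from every other generator. There is no real obstacle; the only care needed is to note that infinite labels contribute no edges.
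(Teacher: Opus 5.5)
Your proposal is correct and follows the paper's route: the paper also specializes the odd-path criterion \cite[Lemma 3.6]{BMMN-02} to the star diagram, noting that the only odd edges are $\{s_0,s_i\}$ with $i\in I_\odd$ and that distinct leaves are never joined. Your supplementary direct check is also valid and makes the lemma self-contained. The element $(s_0s_i)^{(m_i-1)/2}$ satisfies $g^{-1}s_ig=s_0$; equivalently, under the paper's convention $x\mapsto gxg^{-1}$ it carries $s_0$ to $s_i$, which is equally fine. The parity homomorphism $W\to\mathbb{Z}_2$ correctly settles every non-conjugacy claim.
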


\begin{corollary}\label{quotient by generators}
Since conjugate elements have the same normal closures, the structures of the quotients $W / \langle \langle s \rangle \rangle_W$ for $s \in S$ are given as follows:
\begin{itemize}
 \item[(i).]  If $i \in \bar{I}\setminus I_\even$, then $W / \langle \langle s_i \rangle \rangle_W  \cong W_{I_\even} \cong \U_{n_e}$.
  \item[(ii).] If $j \in I_\even$, then $W / \langle \langle s_j \rangle \rangle_W$ is isomorphic to the parabolic subgroup $W_{S \setminus \{s_j\}}$.
\end{itemize}
\end{corollary}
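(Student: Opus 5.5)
The quotients can be read off directly from the presentation \eqref{presentation of star graph}. Killing a normal subgroup generated by a subset of $S$ amounts to adding the relations $s=1$ for those $s$ and then simplifying the remaining Coxeter relations. Since conjugate elements have equal normal closures, Lemma \ref{conjugacy lemma through odd graphs} gives $\langle\langle s_i\rangle\rangle_W=\langle\langle s_0\rangle\rangle_W$ for every $i\in \bar{I}\setminus I_\even$. So for case (i) it suffices to compute $W/\langle\langle s_0\rangle\rangle_W$.

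For (i), I impose $s_0=1$ on the presentation. Each relation $(s_0s_j)^{m_j}=1$ then becomes $s_j^{m_j}=1$.
\begin{itemize}
\item[(a)] If $j\in I_\odd$, this combines with $s_j^2=1$ to give $s_j=s_j^{\gcd(2,m_j)}=1$.
\item[(b)] If $j\in I_\even$, the relation $s_j^{m_j}=1$ is already implied by $s_j^2=1$.
\end{itemize}
The resulting presentation is therefore $\langle s_j,\ j\in I_\even \mid s_j^2=1\rangle$ with no further relations. This is the universal Coxeter group $\U_{n_e}$. It coincides with the standard parabolic subgroup $W_{I_\even}$, because no edges join distinct leaves in the finite diagram, so $m(s_j,s_k)=\infty$ for distinct leaves.

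For (ii), let $j\in I_\even$ and impose $s_j=1$. The only relations involving $s_j$ are $s_j^2=1$ and $(s_0s_j)^{m_j}=1$. The second becomes $s_0^{m_j}=1$, which follows from $s_0^2=1$ because $m_j$ is even. Deleting the generator $s_j$ therefore leaves exactly the Coxeter presentation on $S\setminus\{s_j\}$ with the inherited labels. That presentation defines $W_{S\setminus\{s_j\}}$, since standard parabolic subgroups are Coxeter groups presented by the induced subdiagram.

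I expect no real obstacle here. The only point needing care is to check, in each case, that the relations surviving after the substitution are exactly the Coxeter relations of the claimed group, with nothing extra. This holds because the parity argument above makes every modified relation either trivial or a consequence of $s^2=1$.
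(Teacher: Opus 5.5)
Your proposal is correct and follows the argument the paper intends; the paper gives no separate proof beyond the remark in the statement. You use Lemma~\ref{conjugacy lemma through odd graphs} to replace $s_i$ by $s_0$ in case (i), then kill the generator in the presentation~\eqref{presentation of star graph}, where the parity of $m_j$ makes each relation $(s_0s_j)^{m_j}=1$ either force the remaining generator to be trivial (odd $m_j$) or become redundant (even $m_j$), and you identify the resulting presentations with $W_{I_\even}\cong\U_{n_e}$ and $W_{S\setminus\{s_j\}}$.
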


\begin{lemma}\label{intersection of maximal finite subgroup is again parabolic}\cite[Corollary 9]{FH-03} 
A subgroup of an infinite Coxeter group that can be realized as the intersection of a collection of maximal finite subgroups is a parabolic subgroup.
\end{lemma}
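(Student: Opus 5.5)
This statement is cited from \cite[Corollary 9]{FH-03}, so the paper itself only invokes it. If I were to supply a proof, I would follow the standard route via the Davis complex (or Tits cone) geometry of a Coxeter group. The plan is to show first that every maximal finite subgroup is a parabolic subgroup. Then I would show that an arbitrary intersection of finite parabolic subgroups is again parabolic.

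For the first step I would invoke Lemma \ref{finite subgroup is contained in finite parabolic}. Let $M$ be a maximal finite subgroup. Then $M\le wW_Jw^{-1}$ for some finite standard parabolic $W_J$. Since $wW_Jw^{-1}$ is itself finite, maximality forces $M=wW_Jw^{-1}$. Hence every maximal finite subgroup is a finite parabolic subgroup.

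For the second step I would use the geometric characterization of finite parabolic subgroups: they are precisely the pointwise stabilizers of points (equivalently, of cells) in the Davis complex $\Sigma$. More concretely, $wW_Jw^{-1}$ is the stabilizer of the cell $wW_J$. The key fact is that the pointwise fixed set $\mathrm{Fix}(P)$ of a finite parabolic $P$ is a nonempty convex subcomplex of $\Sigma$. This follows because $\mathrm{Fix}(P)$ is the intersection of the walls of the reflections in $P$, and $\Sigma$ is CAT(0).

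Now let $\{P_\lambda\}$ be a collection of maximal finite subgroups with $H=\bigcap P_\lambda$. By Noetherianity of descending chains of parabolic subgroups (they have bounded rank, and the intersection of two parabolics is parabolic), it suffices to treat finite intersections. So I would aim to prove that $P\cap Q$ is parabolic for finite parabolics $P,Q$. Here I would use the theorem that the intersection of two parabolic subgroups is parabolic. In detail, after conjugating, $P=W_J$ and $Q=wW_Kw^{-1}$ with $w$ a minimal $(J,K)$-double coset representative. Kilmoyer's theorem then gives $W_J\cap wW_Kw^{-1}=W_{J\cap wKw^{-1}}$, which is a standard parabolic.

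The main obstacle is the reduction from an arbitrary (possibly infinite) family to finite intersections. I would handle it by noting that $H$ lies in a finite group $P_{\lambda_0}$, so the descending chain of finite intersections stabilizes after finitely many steps. The intersection equals one of these finite intersections, and by iterating the two-parabolic case above it is parabolic.
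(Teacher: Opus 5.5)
Your proof is correct. The paper gives no argument of its own and simply quotes \cite[Corollary 9]{FH-03}. Your reconstruction is the standard route to that corollary. Maximal finite subgroups are finite parabolic subgroups, by Lemma~\ref{finite subgroup is contained in finite parabolic} together with maximality. Two parabolic subgroups intersect in a parabolic subgroup by Kilmoyer's theorem, once the $W_J$-part of the double coset representative is absorbed into a conjugation. An arbitrary family reduces to a finite one because all the intersections lie in the finite group $P_{\lambda_0}$.

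The Davis complex paragraph and the appeal to Noetherianity of descending chains of parabolic subgroups are never used and can be dropped.
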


As an application of these results, we characterize the image of $s_0$ under $\Aut(W)$.
 
\begin{lemma}\label{s0 maps to s0 under automorphism of W}
Let $\varphi\in \Aut(W)$. Then $\varphi(s_0) = s_0$  up to inner automorphism.
\end{lemma}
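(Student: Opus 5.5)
Our plan is to characterize $s_0$ intrinsically, up to conjugacy, by its relation to the maximal finite subgroups, and then use that automorphisms permute maximal finite subgroups.

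\emph{Step 1.} By Lemma~\ref{finite subgroup is contained in finite parabolic} and Lemma~\ref{maximals are not conjugate}, every maximal finite subgroup of $W$ is conjugate to exactly one of the $W_i$, $i\in I$, as listed in Lemma~\ref{maximal parabolic subgroups are not conjugate}. An automorphism $\varphi$ sends maximal finite subgroups to maximal finite subgroups. Hence $\varphi(W_1)$ and $\varphi(W_2)$ are conjugates $uW_aw^{-1}$-type subgroups $u W_a u^{-1}$ and $v W_b v^{-1}$. Since $n\ge 2$, $W_1\cap W_2=\langle s_0\rangle$ (a standard parabolic intersection: $W_{\{s_0,s_1\}}\cap W_{\{s_0,s_2\}}=W_{\{s_0\}}$). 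Therefore
\[
\langle\varphi(s_0)\rangle=\varphi(W_1)\cap\varphi(W_2)=uW_au^{-1}\cap vW_bv^{-1}.
\]
By Lemma~\ref{intersection of maximal finite subgroup is again parabolic}, this intersection is a parabolic subgroup of order $2$. So $\varphi(s_0)$ generates a rank-one parabolic subgroup, i.e.\ $\varphi(s_0)$ is conjugate to some generator $s\in S$.

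\emph{Step 2.} We must rule out $s=s_j$ with $j\in I$ not conjugate to $s_0$. By Lemma~\ref{conjugacy lemma through odd graphs}, the leaves $s_i$ with $i\in I_\odd$ are already conjugate to $s_0$, so it suffices to exclude $s=s_j$ with $j\in I_\even$. We distinguish $s_0$ from such $s_j$ by an automorphism invariant.

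\emph{Centralizers.} By Lemma~\ref{structure of the centralizer for s0}, $\C_W(s_0)\cong\mathbb{Z}_2\times\U_{n_e}$, while $\C_W(s_j)\cong\mathbb{Z}_2\times\mathbb{Z}_2$ for $j\in I_\even$. If $n_e\ge2$, then $\C_W(s_0)$ is infinite and the two centralizers are not isomorphic, which finishes the argument. If $n_e=1$, then $I_\even=\{j\}$ and $\C_W(s_0)\cong\mathbb{Z}_2^2$ too, so a second invariant is needed.

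\emph{Quotients (case $n_e=1$).} Use Corollary~\ref{quotient by generators}. Here $W/\langle\langle s_0\rangle\rangle\cong\U_1=\mathbb{Z}_2$, while $W/\langle\langle s_j\rangle\rangle\cong W_{S\setminus\{s_j\}}$. The latter is a star with $n-1\ge 1$ odd edges, hence of order at least $6$. An automorphism preserves normal closures up to isomorphism of quotients, so $\varphi(s_0)$ cannot be conjugate to $s_j$.

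\emph{Conclusion.} Hence $\varphi(s_0)=ws_0w^{-1}$ for some $w\in W$, i.e.\ $\widehat{w}^{-1}\varphi(s_0)=s_0$.

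The main obstacle is Step 1: it requires verifying that $\varphi(s_0)$ is a reflection (conjugate of a generator). The intersection trick via Lemma~\ref{intersection of maximal finite subgroup is again parabolic} is the intended route. The case distinction in Step 2 is routine once the invariants are chosen.
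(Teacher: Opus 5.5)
Your proof is correct and is essentially the paper's argument: $\langle\varphi(s_0)\rangle$ is an intersection of images of maximal finite subgroups, hence a rank-one parabolic subgroup by Lemma~\ref{intersection of maximal finite subgroup is again parabolic}, and conjugacy to $s_j$ with $j\in I_\even$ is then excluded by an automorphism invariant. The only deviation is that for $n_e\ge 2$ you compare centralizers ($\C_W(s_0)\cong\mathbb{Z}_2\times\U_{n_e}$ infinite versus $\C_W(s_j)$ finite), whereas the paper uses the quotient comparison of Corollary~\ref{quotient by generators} for every $n_e$. That quotient comparison works uniformly, because $W/\langle\langle s_j\rangle\rangle\cong W_{S\setminus\{s_j\}}$ contains a finite subgroup of order at least $4$ while $\U_{n_e}$ has none, so your case split is valid but not needed.
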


\begin{proof}
Note that $\varphi \big ( \langle s_0 \rangle \big ) = \varphi \big (  \bigcap_{ i = 1}^n W_i \big )   =  \bigcap_{i=1}^n \varphi  ( W_i) $. Since each $W_i$ is a maximal finite subgroup, so is their images $\varphi(W_i)$. By Lemma \ref{intersection of maximal finite subgroup is again parabolic}, $\varphi(\langle s_0 \rangle)$ is a parabolic subgroup of rank one, i.e., $\varphi(s_0)$ is conjugate to some $s_i \in S$. 
By Corollary \ref{quotient by generators}, the quotient $W / \langle \langle s_0 \rangle \rangle_W$ is a universal Coxeter group, while $W / \langle \langle s_j \rangle \rangle_W$ is not for $j \in I_\even$ (it contains at least one edge with a finite label). Since $\varphi$ induces an isomorphism of these quotients, $\varphi(s_0)$ cannot be conjugate to any $s_j \in I_\even$. Consequently, $\varphi(s_0)$ must be conjugate to $s_0$, which implies $\varphi(s_0) = s_0$ up to an inner automorphism.
\end{proof}

\begin{lemma}\label{maximal parabolic subgroups goes to its conjugate}
Let $\varphi\in \Aut(W)$. Then, for each $i \in I$, $\varphi(W_i)$ is conjugate to $W_i$ up to diagram automorphisms.
\end{lemma}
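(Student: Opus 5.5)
The plan is to show that $\varphi$ permutes the conjugacy classes of the maximal finite subgroups $W_1,\dots,W_n$ in a label-preserving way. Such a permutation of the leaves is exactly a diagram automorphism, and composing with it gives the claim.

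\textbf{Step 1: $\varphi$ induces a permutation $\sigma$ of $I$.} Since $\varphi(W_i)$ is a maximal finite subgroup, Lemma \ref{finite subgroup is contained in finite parabolic} places it inside a finite parabolic subgroup $wW_Jw^{-1}$. The finite standard parabolics of $W$ are the trivial group, the $\langle s_k\rangle$ and the $W_j$, so $wW_Jw^{-1}$ lies in a conjugate of some $W_j$. Maximality then forces $\varphi(W_i)=wW_jw^{-1}$. By Lemma \ref{maximal parabolic subgroups are not conjugate}(ii) the index $j$ is unique; call it $\sigma(i)$. Applying the same argument to $\varphi^{-1}$ shows that $\sigma$ is a bijection of $I$.

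\textbf{Step 2: $\sigma$ preserves labels.} Since $\varphi(W_i)\cong W_i$, we have $|W_i|=|W_{\sigma(i)}|$. The order is $4$ when $m_i=2$ and $2m_i$ otherwise, which is also $4$ when $m_i=2$. Hence $m_{\sigma(i)}=m_i$ in every case.

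\textbf{Step 3: realize $\sigma$ and compose.} A label-preserving permutation of the leaves is a graph automorphism of the star diagram fixing $s_0$, so it defines a diagram automorphism $\rho\in\diag(W)$ with $\rho(W_j)=W_{\sigma^{-1}(j)}$. Then $\rho\circ\varphi$ sends each $W_i$ to $\rho(w)W_i\rho(w)^{-1}$, which is a conjugate of $W_i$. This is the meaning of ``up to diagram automorphisms.''

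The main obstacle is Step 1, namely justifying that a maximal finite subgroup equals a conjugate of a maximal finite standard parabolic, rather than merely lying inside one. This follows from maximality together with the classification of the finite standard parabolics in the star. One must also remember that $\varphi$ need not fix $s_0$ at this stage. If desired, Lemma \ref{s0 maps to s0 under automorphism of W} lets us first compose with an inner automorphism so that $\varphi(s_0)=s_0$, but this is not needed for the statement.
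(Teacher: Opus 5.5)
Your proof is correct and follows essentially the same route as the paper. You show that $\varphi(W_i)$ is a maximal finite subgroup, hence a conjugate of some $W_j$. Then $m_j=m_i$ because the orders agree, and a label-preserving leaf permutation corrects the index. There are two minor differences: you reach the conjugacy via Lemma~\ref{finite subgroup is contained in finite parabolic} plus maximality instead of the paper's appeal to Lemma~\ref{intersection of maximal finite subgroup is again parabolic}, and your bijection $\sigma$ yields a single diagram automorphism working for all $i$ simultaneously, which is more careful than the paper's per-index ``swap'' and is what Theorem~\ref{Automorphism group of star graph} actually uses.
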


\begin{proof}
By Lemma \ref{maximals are not conjugate} and Lemma \ref{intersection of maximal finite subgroup is again parabolic}, $\varphi(W_i)$ is a maximal finite parabolic subgroup. Thus $\varphi(W_i)$ is conjugate to $ W_j$ for some $j \in I$. Since $W_i \cong W_j$, their corresponding edge labels must be equal, i.e., $m_i = m_j$. This equality ensures the existence of a diagram automorphism which swaps the subgroups $W_i$ and $W_j$. Consequently, we conclude that, up to diagram automorphisms, $\varphi(W_i)$ is conjugate to $W_i$.
\end{proof}

 
\begin{corollary}\label{reflections map to its own conjugate or to maximal element}
Let $\varphi\in\Aut(W)$. Then, up to diagram automorphisms, the following hold:
    \begin{enumerate}
        \item[(i)] If $i\in I\setminus I_2$, $\varphi(s_i)$ is conjugate to $s_i$. 
        \item[(ii)] If $i\in I_2$, $\varphi(s_i)$ is conjugate to $s_i$ or $s_0s_i$.
\end{enumerate} 
\end{corollary}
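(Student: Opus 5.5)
Having already composed $\varphi$ with a suitable diagram automorphism, I will assume, by Lemma \ref{maximal parabolic subgroups goes to its conjugate}, that $\varphi(W_i)=wW_iw^{-1}$ for some $w\in W$. Then $\varphi(s_i)\in wW_iw^{-1}$ is an involution, so $w^{-1}\varphi(s_i)w$ is an involution of $W_i$. The plan is to list the involutions of $W_i$ and rule out all candidates except the ones allowed in the statement, using the conjugacy-class and quotient invariants from Lemma \ref{conjugacy lemma through odd graphs} and Corollary \ref{quotient by generators}.

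For $i\in I_\odd$: every involution of $W_i\cong D_{2m_i}$ with $m_i$ odd is a reflection. All such reflections are conjugate in $W_i$ to $s_0$ and to $s_i$, so $\varphi(s_i)$ is conjugate to $s_i$ in $W$.

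For $i\in I_\even\setminus I_2$: the involutions of $W_i$ are the reflections, which are $W_i$-conjugate to $s_0$ or to $s_i$, together with the central element $\Delta_i$. I exclude two cases.
\begin{itemize}
\item[(a)] If $\varphi(s_i)$ were conjugate to $s_0$, then $\varphi(s_0)$ (which is conjugate to $s_0$ by Lemma \ref{s0 maps to s0 under automorphism of W}) and $\varphi(s_i)$ would be conjugate. Pulling back by $\varphi^{-1}$ would make $s_0$ and $s_i$ conjugate, contradicting Lemma \ref{conjugacy lemma through odd graphs}.
\item[(b)] If $\varphi(s_i)$ were conjugate to $\Delta_i$, compare centralizers. $\C_W(s_i)\cong\mathbb{Z}_2\times\mathbb{Z}_2$ by Lemma \ref{structure of the centralizer for s0}, whereas $\C_W(\Delta_i)$ contains $W_i\cong D_{2m_i}$, which has order $2m_i\ge 8$. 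This is impossible, since conjugate elements have isomorphic centralizers.
\end{itemize}
Hence $\varphi(s_i)$ is conjugate to $s_i$.

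For $i\in I_2$: $W_i=\{1,s_0,s_i,s_0s_i\}$. The same argument as in (a) rules out $\varphi(s_i)$ being conjugate to $s_0$, leaving $s_i$ or $s_0s_i$. This is part (ii) of the statement.

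The main obstacle is case (b), excluding the image $\Delta_i$. The centralizer-size argument handles it, but one must check that Lemma \ref{structure of the centralizer for s0} genuinely gives a group of order $4$ for $m_i\ge4$ even. This holds because $\langle s_i\rangle\times\langle\Delta_i\rangle$ with $\Delta_i\neq s_i$.
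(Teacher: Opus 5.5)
Your proof is correct. It also goes further than the paper, which gives no argument for this corollary and simply presents it as immediate from Lemma~\ref{maximal parabolic subgroups goes to its conjugate}.

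The reasoning the paper has in mind only appears in the proof of Theorem~\ref{Automorphism group of star graph}. There $\varphi$ is first normalized so that $\varphi(s_0)=s_0$ and $\varphi(W_i)=W_i^{x_i}$ with $x_i\in\C_W(s_0)$. One then reads off $\varphi(s_i)=x_iy_ix_i^{-1}$ with $y_i\in W_i$ and $\langle s_0,y_i\rangle=W_i$. This generation condition is what silently excludes $\Delta_i$, and the reflections in the $W_i$-class of $s_0$, when $m_i$ is even.

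You instead keep an arbitrary conjugator and remove the unwanted involutions of $W_i$ using invariants:
\begin{enumerate}
\item[(1)] $s_0$ and $s_i$ are not conjugate for $i\in I_{\even}$ (Lemma~\ref{conjugacy lemma through odd graphs}), transported to the images via Lemma~\ref{s0 maps to s0 under automorphism of W};
\item[(2)] the centralizer comparison $|\C_W(s_i)|=4<2m_i\le|\C_W(\Delta_i)|$.
\end{enumerate}

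What your route buys is that it states explicitly the one non-formal step the paper leaves unstated: $\varphi(s_i)$ can never be conjugate to $\Delta_i$. It also needs no normalization of $\varphi(s_0)$. The cost is relying on the centralizer computation of Lemma~\ref{structure of the centralizer for s0}.

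The generation argument, by contrast, needs only the subgroup structure of $D_{2m_i}$, but only once $\varphi(s_0)=s_0$ has been arranged with conjugators in $\C_W(s_0)$. That arrangement itself tacitly requires knowing that $x_i^{-1}s_0x_i$ is $W_i$-conjugate to $s_0$. Proving that calls for exactly the kind of invariant argument you give.
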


The preceding results establish that every automorphism of $W$ stabilizes the collection of maximal finite parabolic subgroups, up to inner and diagram automorphisms. Consequently, to characterize $\Aut(W)$, we now define the automorphisms that preserve these subgroups: transvections and partial conjugations.
\medskip

\textbf{Transvections.} For each $i\in I_2$, we define a map $\tau_i: S \to W$, given by 
$$\tau_i(s_j):=\begin{cases}
s_0s_i,&\text{if } j=i,\\
s_j,&\text{if } j\neq i.
\end{cases}$$
This map extends to an involutive automorphism of $W$, which we refer to as a \textit{transvection}. Let $\T := \langle \tau_i \mid i \in I_2 \rangle$ be the subgroup of $\Aut (W)$ generated by all such transvections. The following properties of $\T$ are immediate from the definitions.

\begin{lemma}\label{Structure of T}
Let $n_2 = |I_2|$. Then the following assertions hold:
\begin{itemize}
\item[(i).] Any two transvections $\tau_i$ and $\tau_j$ commute. Consequently, $\T \cong (\mathbb{Z}_2)^{n_2}$.
\item[(ii).] For each $\tau \in \T$, the centralizer of $s_0$ is invariant; i.e., $\tau(\C_W(s_0)) = \C_W(s_0)$.
\item[(iii).] Any $\tau \in \T$ fixes $\C_W(s_0)$ pointwise if and only if $\tau = \Id$.
\end{itemize}
\end{lemma}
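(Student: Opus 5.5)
Everything reduces to computing each $\tau_i$ on the generators $S$, since an automorphism is determined by its values there. First we check that $\tau_i$ is a well-defined involutive automorphism. For $i\in I_2$ we have $m_i=2$, so $s_0$ and $s_i$ commute. Hence $(s_0s_i)^2=1$, and the new generator $s_0s_i$ is an involution commuting with $s_0$. All the defining relations of \eqref{presentation of star graph} are therefore preserved. Moreover, $\tau_i^2(s_i)=\tau_i(s_0)\tau_i(s_i)=s_0s_0s_i=s_i$, so $\tau_i^2=\Id$.

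\textbf{Part (i).} For $i\neq j$ in $I_2$ we compare $\tau_i\tau_j$ and $\tau_j\tau_i$ on each $s_k$.
\begin{itemize}
\item On $s_k$ with $k\notin\{i,j\}$, both composites act as the identity.
\item On $s_j$: $\tau_i\tau_j(s_j)=\tau_i(s_0s_j)=s_0s_j$, since $\tau_i$ fixes $s_0$ and $s_j$. Also $\tau_j\tau_i(s_j)=\tau_j(s_j)=s_0s_j$.
\item On $s_i$ the computation is symmetric.
\end{itemize}
So the transvections commute and each has order $2$, which makes $\T$ a quotient of $(\mathbb{Z}_2)^{n_2}$. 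To show that $\T$ is the full $(\mathbb{Z}_2)^{n_2}$, let $J\subseteq I_2$ be nonempty and set $\tau_J=\prod_{j\in J}\tau_j$. For $j\in J$ we get $\tau_J(s_j)=s_0s_j\neq s_j$, because $s_0\neq 1$. Hence $\tau_J\neq\Id$. This independence is the only step needing more than bookkeeping.

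\textbf{Part (ii).} By Lemma \ref{structure of the centralizer for s0}, $\C_W(s_0)=\langle s_0\rangle\times\langle\Delta_j\mid j\in I_\even\rangle$. Since $\tau$ fixes $s_0$, we have $\tau(\C_W(s_0))=\C_W(\tau(s_0))=\C_W(s_0)$.

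As a concrete check, compute each $\tau_i$ on the generators $\Delta_j=(s_0s_j)^{m_j/2}$:
\begin{itemize}
\item for $j\neq i$, $\tau_i(\Delta_j)=\Delta_j$;
\item for $j=i$, $\Delta_i=s_0s_i$ and $\tau_i(\Delta_i)=s_0\cdot s_0s_i=s_i$.
\end{itemize}
Now $s_i$ lies in $\C_W(s_0)$, since $s_i=s_0\Delta_i$. This confirms the invariance directly.

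\textbf{Part (iii).} By (ii), $\tau_J(\Delta_j)=s_j\neq\Delta_j$ for $j\in J$. So a nontrivial $\tau_J$ moves the element $\Delta_j\in\C_W(s_0)$ and does not fix $\C_W(s_0)$ pointwise. The converse, that the identity fixes it, is trivial.

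There is no real obstacle here. The one point requiring care is confirming $s_i\neq\Delta_i$, i.e.\ $s_0\neq 1$ in $W$, which holds since $(W,S)$ is a Coxeter system.
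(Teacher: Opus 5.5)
Your proposal is correct and is essentially the paper's approach: the paper gives no written proof and only says these properties are ``immediate from the definitions,'' and you carry out exactly that verification on the generators. The key steps you give are the right ones: independence of the $\tau_i$ via $\tau_J(s_j)=s_0s_j\neq s_j$, the identity $\tau(\C_W(s_0))=\C_W(\tau(s_0))$ for (ii), and $\tau_J(\Delta_j)=s_j\neq\Delta_j$ for (iii).
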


\textbf{Partial Conjugation of Type $1$.} For each $i \in I \setminus I_2$ and each integer $1 \leq t < m_i$ with $\gcd(t, m_i) = 1$, we define a map $\phi_{i,t}: S \to W$ given by: 
$$\phi_{i,t}(s_j) := \begin{cases} 
 s_i (s_0 s_i)^{t-1}, & \text{if } j = i,\\
s_j, & \text{if } j \neq i.
 \end{cases}$$
These maps extend to automorphisms of $W$, which we refer to as \textit{partial conjugations of type $1$}. They belong to the broader concept of angle deformations \cite{MM-08}. 

Notice that $\phi_{i,t}$ maps $s_i$ to one of its conjugates within the parabolic subgroup $W_i$ while fixing all other generators $s_j$ for all $j \neq i$. For each $i \in I \setminus I_2$, let $\p_{1,i} := \{ \phi_{i,t} \mid 1 \leq t < m_i,~ \gcd(t, m_i) = 1 \}$. We define $\p_{1}:=\langle \p_{1,i}\mid i\in I\setminus I_2\rangle$ to be the subgroup of $\Aut(W)$ generated by all partial conjugations of type $1$. The following structural properties of $\p_1$ are immediate.

\begin{lemma}\label{Structure of P}
The following assertions hold:
\begin{itemize}
\item[(i).]  For each $i \in I \setminus I_2$, $\p_{1,i} \cong \mathbb{Z}_{m_i}^\times$. In particular, $\p_1 \cong \prod_{k=1}^{h_o} \big(\mathbb{Z}_{m_{o,k}}^\times\big)^{\alpha_k} \times \prod_{l=1}^{h_e}\big( \mathbb{Z}_{m_{e,l}}^\times\big)^{\beta_l}.$
\item[(ii).]  $\p_1$ fixes the centralizer $\C_W(s_0)$ of $s_0$ pointwise.
\end{itemize}
\end{lemma}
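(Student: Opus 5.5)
The plan is to first understand each $\phi_{i,t}$ through its action on the single parabolic subgroup $W_i$, since it fixes every other generator. Write $r_i := s_0 s_i$, an element of order $m_i$ in $W_i \cong D_{2m_i}$. Then $\phi_{i,t}$ fixes $s_0$ and sends $s_i \mapsto s_i r_i^{t-1}$, so
\[
\phi_{i,t}(r_i) = s_0 s_i (s_0 s_i)^{t-1} = r_i^{t}.
\]
Thus $\phi_{i,t}|_{W_i}$ is the automorphism of $D_{2m_i}$ fixing the reflection $s_0$ and raising the rotation to the power $t$. This is well defined and bijective exactly when $\gcd(t,m_i)=1$, which is where $i\notin I_2$ (i.e. $m_i\ge 3$) and coprimality are used. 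Because the map fixes every other generator, it respects all Coxeter relations, and it is invertible with inverse $\phi_{i,t'}$, where $tt'\equiv 1 \pmod{m_i}$.

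Next I compute the composition law. Since $\phi_{i,t}$ and $\phi_{i,u}$ both fix $s_0$ and preserve $W_i$, we get $\phi_{i,t}\phi_{i,u}(r_i) = r_i^{tu}$. An automorphism fixing $s_0$ and all $s_j$ with $j\neq i$ is determined by its value on $r_i$, so $\phi_{i,t}\phi_{i,u} = \phi_{i,tu \bmod m_i}$. Hence $t\mapsto \phi_{i,t}$ is an injective homomorphism $\mathbb{Z}_{m_i}^\times \to \Aut(W)$ whose image is the set $\p_{1,i}$, so $\p_{1,i}$ is a group isomorphic to $\mathbb{Z}_{m_i}^\times$. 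For $i\neq j$, $\phi_{i,t}$ and $\phi_{j,u}$ commute, since both sides agree on every generator: each acts only inside its own $W_i$ or $W_j$ and fixes $s_0$ together with the other leaves. Then $\p_1$ is the internal direct product of the $\p_{1,i}$: any product $\prod_i \phi_{i,t_i}$ restricts on $W_i$ to $\phi_{i,t_i}$, so triviality forces every $t_i\equiv 1$. Grouping the indices by label and parity gives the displayed product formula.

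For (ii), I use Lemma \ref{structure of the centralizer for s0}, which gives $\C_W(s_0)=\langle s_0\rangle\times\langle \Delta_j \mid j\in I_\even\rangle$. Every element of $\p_1$ fixes $s_0$, so it suffices to show $\phi_{i,t}(\Delta_j)=\Delta_j$ for all $j\in I_\even$.
\begin{itemize}
\item[(a)] If $j\neq i$, then $\Delta_j\in W_j$ is fixed, because $s_0$ and $s_j$ are fixed.
\item[(b)] If $j=i\in I_\even\setminus I_2$, then $\Delta_i = r_i^{m_i/2}$, so $\phi_{i,t}(\Delta_i) = r_i^{t m_i/2}$. Since $t$ is coprime to the even number $m_i$, $t$ is odd, and therefore $r_i^{t m_i/2} = r_i^{m_i/2} = \Delta_i$, because $r_i^{m_i}=1$.
\end{itemize}
The indices $i\in I_\odd$ contribute no $\Delta_i$ to $\C_W(s_0)$, so there is nothing further to check.

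The main obstacle is the step in (b): the parity argument showing $t$ is odd is exactly what makes $\Delta_i$ invariant. The only other point needing care is the determination argument behind the composition law, which I have justified by the generator check above.
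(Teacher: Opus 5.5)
Your proof is correct. The paper gives no argument at all, only calling these properties immediate, and your verification is the routine check it leaves to the reader: the dihedral automorphism $r_i \mapsto r_i^{t}$ of $W_i$, disjoint supports for commutation and directness (as in the proof of Lemma~\ref{T and P commute}), and the oddness of $t$ giving invariance of $\Delta_i$. One cosmetic point: when $n_2 \geq 1$ the displayed product also includes factors for the label $2$, and these agree with your product over $I\setminus I_2$ only because $\mathbb{Z}_2^\times$ is trivial, so say this explicitly.
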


\textbf{Partial Conjugations of Type $2$.} The final type of automorphisms consists of partial conjugations determined by the even-labelled edges. For each $i \in I_\even$ and $j \neq i \in I$, we define the automorphism $\psi_{i,j}$ by its action on the generating set $S$ given by:
$$\psi_{i,j}(s_k) := 
\begin{cases} 
\Delta_i s_j \Delta_i, & \text{if } k = j,\\
s_k, & \text{if } k \neq j.
\end{cases}$$
Additionally, if $i \in I_2$, we define a second class of partial conjugation $\sigma_{i,j}$ (for all $j \neq i \in I$) given by: 
$$\sigma_{i,j}(s_k) := 
\begin{cases} 
s_i s_j s_i, & \text{if } k = j,\\
s_k, & \text{if } k \neq j.
\end{cases}$$
These automorphisms, which we refer to as {\it partial conjugations of type} $2$, are related to the diagram twisting introduced in \cite{Mu-06} and the special automorphisms of irreducible right-angled Coxeter groups studied in \cite{Mu-98}. 

For each $i \in I_\even$, we define the local subgroups $\p_{2,i} := \langle \psi_{i,j} \mid j \neq i \rangle$ if $i \in I_{\text{even}} \setminus I_2$, and $\p_{2,i} := \langle \psi_{i,j}, \sigma_{i,j} \mid j \neq i \rangle$ if $i \in I_2$. In both cases, $\p_{2,i}$ is an elementary abelian $2$-group satisfying:
$$\p_{2,i}\cong 
\begin{cases}
(\mathbb{Z}_2)^{2n-n_2-1},& \text{if}\; i\in I_2,\\
(\mathbb{Z}_2)^{n-1},& \text{if}\; i\in I_\even \setminus I_2.
\end{cases}$$
We define $\p_2:= \langle \p_{2, i} \mid i \in I_\even \rangle$ to be  the subgroup of $\Aut(W)$ generated by all partial conjugations of type 2. Its full structural characterization is deferred to the next section. We conclude this section with the following easy observations:
\begin{itemize}
\item $\p_2$ keeps $\Delta$ invariant, and conjugate the generators by elements of $\C_W(s_0)$.
\item $\sigma_{i,j} = \psi_{i,j}$ if and only if $j \in I_2$.
\item For each $j\in I\setminus I_2$, $\psi_{i,j}\sigma_{i,j}=\phi_{j,{m_j-1}} \in \p_1$.
\end{itemize}

\section{Characterizing Subgroups of Aut(W)}\label{Important subgroups of Aut(W)}
Building upon the preliminaries in Section 2, we now characterize the specific subgroups of $\Aut(W)$ generated by transvections and partial conjugations. We establish the commutation relations between these subgroups and determine their precise structure. 

\begin{lemma}\label{T and P commute}
The subgroup $\langle \T, \p_1 \rangle \le \Aut(W)$ is the internal direct product $\T \times \p_1$.
\end{lemma}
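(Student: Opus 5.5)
To show that $\langle \T, \p_1 \rangle$ is the internal direct product $\T \times \p_1$, I will establish two facts: every transvection commutes with every partial conjugation of type $1$, and $\T \cap \p_1 = \{\Id\}$. Together these give that both subgroups are normal in the subgroup they generate, and that the generated subgroup is their internal direct product.

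\textbf{Commutation.} It suffices to check that each generator $\tau_i$ ($i \in I_2$) commutes with each generator $\phi_{j,t}$ ($j \in I \setminus I_2$, $\gcd(t,m_j)=1$). Since both compositions are automorphisms, I only need to compare them on the generating set $S$.

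Note first that $i \neq j$, because $I_2$ and $I \setminus I_2$ are disjoint. By definition:
\begin{itemize}
\item $\tau_i$ moves only $s_i$, sending it to $s_0 s_i$;
\item $\phi_{j,t}$ moves only $s_j$, sending it to $s_j(s_0s_j)^{t-1}$.
\end{itemize}
The image of $s_j$ under $\phi_{j,t}$ is a word in $s_0$ and $s_j$ only, and both letters are fixed by $\tau_i$. Likewise, $s_0 s_i$ is a word in $s_0$ and $s_i$, both fixed by $\phi_{j,t}$.

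Now evaluate both composites on each generator:
\begin{itemize}
\item On $s_0$ and on every $s_k$ with $k \neq i,j$, both $\tau_i\phi_{j,t}$ and $\phi_{j,t}\tau_i$ act trivially.
\item On $s_i$, both give $s_0 s_i$.
\item On $s_j$, both give $s_j(s_0s_j)^{t-1}$.
\end{itemize}
Hence $\tau_i\phi_{j,t} = \phi_{j,t}\tau_i$. It follows that $\T$ and $\p_1$ commute elementwise.

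\textbf{Trivial intersection.} I will use Lemma~\ref{Structure of P}(ii) together with Lemma~\ref{Structure of T}(iii). Suppose $\tau \in \T \cap \p_1$. Since $\tau \in \p_1$, Lemma~\ref{Structure of P}(ii) says $\tau$ fixes $\C_W(s_0)$ pointwise. Since $\tau \in \T$, Lemma~\ref{Structure of T}(iii) then forces $\tau = \Id$.

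If one prefers not to rely on these lemmas, a direct check works as well. Any $\phi \in \p_1$ fixes $s_0$ and every $s_k$ with $k \in I_2$. On the other hand, a nontrivial $\tau = \prod_{i\in J}\tau_i$ with $\emptyset \neq J \subseteq I_2$ sends $s_i$ to $s_0s_i \neq s_i$ for each $i \in J$.

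\textbf{Conclusion.} Since $\T$ and $\p_1$ commute and intersect trivially, $\langle \T,\p_1\rangle = \T\p_1 \cong \T\times\p_1$.

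The only step needing care is the trivial intersection, and only insofar as it relies on the cited pointwise-fixing properties. Lemma~\ref{Structure of P}(ii) requires that $\p_1$ fix each $\Delta_k$. This holds because, for $k$ even and not in $I_2$, $\phi_{k,t}$ restricts to an automorphism of $W_k$, and an automorphism preserves the central longest element of the finite dihedral group $W_k$ (for $m_k \ge 4$ that center is $\{1,\Delta_k\}$). For $k \in I_2$, $\phi_{k,t}$ is not defined, and $\Delta_k = s_0 s_k$ is fixed trivially. The direct generator check above avoids this subtlety entirely, so I would present that version.
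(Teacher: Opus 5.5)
Your proof is correct and uses the same idea as the paper: a transvection $\tau_i$ and a type-$1$ partial conjugation $\phi_{j,t}$ move disjoint leaf generators ($i\in I_2$, $j\in I\setminus I_2$) and both fix $s_0$, so they commute. Your generator-by-generator check and your direct argument that $\T\cap\p_1=1$ (a nontrivial element of $\T$ sends some $s_i$, $i\in I_2$, to $s_0s_i\neq s_i$, while $\p_1$ fixes every such $s_i$) fill in details that the paper calls ``immediate''.
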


\begin{proof}
Let $\tau_i \in \T$ and $\phi_{j,t} \in \p_1$ for some $i\in I_2$, $j\in I\setminus I_2$. By definition, $\tau_i$ acts non-trivially only on the parabolic subgroup $W_i$, while $\phi_{j,t}$ acts non-trivially only on $W_j$. Also, both $\tau_i$ and $\phi_{j,t}$ fix the intersection $W_i \cap W_j = \langle s_0\rangle$. Since $I_2 \cap (I \setminus I_2) = \emptyset$, these automorphisms have disjoint support and thus commute. The triviality of the intersection follows immediately, establishing the direct product.
\end{proof}

\begin{lemma}\label{local adjustment}
Let $\varphi \in \Aut(W)$ be an automorphism of $W$ such that $\varphi(W_i) = W_i$ for all $i \in I$. Then for any $j \in I$, there exists an automorphism $\eta_j \in \T \times \p_1$ such that
$$\eta_j \circ \varphi|_{W_j} = \Id_{W_j} \quad \text{and} \quad \eta_j \circ \varphi|_{W_k} = \varphi|_{W_k} \text{ for all } k \neq j.$$
\end{lemma}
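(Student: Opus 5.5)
Since $\varphi(W_i)=W_i$ for every $i\in I$, we first locate $s_0$. Because $\langle s_0\rangle=\bigcap_{i\in I}W_i$, as in the proof of Lemma \ref{s0 maps to s0 under automorphism of W}, we get $\varphi(\langle s_0\rangle)=\bigcap_i\varphi(W_i)=\bigcap_i W_i=\langle s_0\rangle$, and hence $\varphi(s_0)=s_0$. The restriction $\varphi|_{W_j}$ is therefore an automorphism of the finite group $W_j$ fixing $s_0$. The plan is to choose $\eta_j$ supported on the single generator $s_j$, so that it fixes $s_0$ and every $s_k$ with $k\neq j$. Then $\eta_j$ is the identity on each $W_k$ with $k\neq j$, since $W_k=\langle s_0,s_k\rangle$. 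Consequently $\eta_j\circ\varphi|_{W_k}=\varphi|_{W_k}$ holds automatically, because $\varphi(W_k)=W_k$. Everything thus reduces to choosing $\eta_j$ so that $\eta_j(\varphi(s_j))=s_j$.

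We split into cases according to $j$.

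\emph{Case $j\in I_2$.} Here $W_j=\{1,s_0,s_j,s_0s_j\}\cong\mathbb{Z}_2\times\mathbb{Z}_2$. The element $\varphi(s_j)$ is an involution distinct from $s_0$, so it equals $s_j$ or $s_0s_j$. In the first case take $\eta_j=\Id$. In the second take $\eta_j=\tau_j$, which satisfies $\tau_j(s_0s_j)=s_0\cdot s_0s_j=s_j$ and is supported on $s_j$. This gives $\eta_j\in\T$.

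\emph{Case $j\in I\setminus I_2$.} Here $W_j\cong D_{2m_j}$ with rotation $r=s_0s_j$ of order $m_j$. An automorphism of $W_j$ fixing $s_0$ must send $s_j$ to a reflection $s_0r^{u}$ such that $s_0\cdot s_0r^{u}=r^{u}$ generates $\langle r\rangle$; that is, $\gcd(u,m_j)=1$. We compare this with $\phi_{j,t}(s_j)=s_j(s_0s_j)^{t-1}$. The reflections of $W_j$ can be written as $s_j r^{a}$, and for these $s_0\cdot s_jr^{a}=r^{a+1}$. Hence $\phi_{j,t}$ is exactly the automorphism of $W_j$ fixing $s_0$ and acting on $\langle r\rangle$ by $r\mapsto r^{t}$.

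Therefore $\varphi|_{W_j}$ equals $\phi_{j,t}|_{W_j}$ for the unique $t\in\mathbb{Z}_{m_j}^\times$ with $\varphi(r)=r^t$. Lemma \ref{Structure of P}(i), which gives $\p_{1,j}\cong\mathbb{Z}_{m_j}^\times$, says that $\p_{1,j}$ is closed under inverses. So we may take $\eta_j=\phi_{j,t}^{-1}=\phi_{j,t'}$, where $tt'\equiv1 \pmod{m_j}$. This $\eta_j$ is again supported on $s_j$ and restores $s_j$.

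The main point requiring care is the claim that $\varphi|_{W_j}$ is determined by $t$. This holds because $W_j=\langle s_0,s_j\rangle$ and both maps fix $s_0$. One also has to check the index bookkeeping: $\phi_{j,t}(s_j)=s_jr^{t-1}$ must correspond to $r\mapsto r^t$, which is what makes the parametrization by $t$ bijective with the $\gcd$ condition. The remaining verifications are routine.
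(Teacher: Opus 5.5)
Your proposal is correct and follows essentially the same route as the paper. You deduce $\varphi(s_0)=s_0$ from $\langle s_0\rangle=\bigcap_i W_i$, take $\eta_j\in\{\Id,\tau_j\}$ when $j\in I_2$ and $\eta_j=\phi_{j,t}^{-1}$ when $j\in I\setminus I_2$, and then use that $\eta_j$ fixes $s_0$ and every $s_k$ with $k\neq j$. Your explicit check that $\phi_{j,t}$ acts on $W_j$ by $s_0s_j\mapsto(s_0s_j)^t$, and your appeal to $\varphi(W_k)=W_k$ for the factors with $k\neq j$, simply spell out steps the paper leaves implicit.
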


\begin{proof}
Suppose that $\varphi(W_i) = W_i$ for all $i \in I$. Since $\varphi$ must fix the intersection of all $W_i$, we have $\varphi(s_0)=s_0$. If $j \in I_2$, we must have either $\varphi(s_j) = s_j$ or $\varphi(s_j) = s_0s_j$. If $\varphi(s_j) = s_j$, we set $\eta_j = \Id$; if $\varphi(s_j) = s_0s_j$, we set $\eta_j = \tau_j$. In either case, $(\eta_j \circ \varphi)|_{W_j} = \Id_{W_j}$. On the other hand, if $j \in I \setminus I_2$, we get $\varphi(s_j) = s_j(s_0s_j)^{t-1}$ for some $1 \leq t < m_j$ with $\gcd(t, m_j) = 1$. By setting $\eta_j = (\phi_{j,t})^{-1} \in \p_{1,j}$, we obtain $(\eta_j \circ \varphi)|_{W_j} = \Id_{W_j}$. Finally, since the support of any $\eta_j$ is contained in $W_j$, it fixes the generators $s_t$ for all $t \neq j$. Therefore, $\eta_j \circ \varphi|_{W_t} = \varphi|_{W_t}$ for all $t \neq j$.
\end{proof}
The following corollary is now immediate.
\begin{corollary}\label{automorphisms that preserve the maximal standard parabolic subgroups}
$\varphi \in \T\times \p_1$ if and only if $\varphi (W_i)=W_i$ for all $ i\in I$.
\end{corollary}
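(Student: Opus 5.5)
The corollary has two directions, and I will prove them separately.

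\emph{Forward direction.} It suffices to check that each generator of $\T\times\p_1$ preserves every $W_i$. A transvection $\tau_i$ sends $s_i\mapsto s_0s_i\in W_i$, fixes $s_0$, and fixes all other leaves. Hence $\tau_i(W_i)\subseteq W_i$, and $\tau_i(W_k)=W_k$ for $k\neq i$. Since $\tau_i$ is an involution, $\tau_i(W_i)=W_i$. Similarly, $\phi_{i,t}$ fixes $s_0$ and $s_k$ for $k\neq i$, and sends $s_i$ into $W_i$. It is an automorphism whose inverse $\phi_{i,t'}$, with $tt'\equiv 1 \pmod{m_i}$, has the same form by Lemma \ref{Structure of P}(i). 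So both it and its inverse map $W_i$ into $W_i$, giving $\phi_{i,t}(W_i)=W_i$. Because the set of automorphisms preserving every $W_i$ is a subgroup, the whole group $\T\times\p_1$ (Lemma \ref{T and P commute}) preserves each $W_i$.

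\emph{Reverse direction.} Let $\varphi$ satisfy $\varphi(W_i)=W_i$ for all $i\in I$. I will apply Lemma \ref{local adjustment} iteratively, correcting one leaf at a time. Enumerate $I=\{1,\dots,n\}$. Obtain $\eta_1$ with $\eta_1\circ\varphi$ equal to the identity on $W_1$ and agreeing with $\varphi$ on each $W_k$, $k\neq1$. Then $\varphi_1:=\eta_1\circ\varphi$ still preserves every $W_i$: it is the identity on $W_1$, and for $k\neq 1$ we have $\varphi_1(W_k)=\varphi(W_k)=W_k$. Apply the lemma again to $\varphi_1$ with $j=2$, and continue. The $k$-th correction does not disturb the subgroups $W_1,\dots,W_{k-1}$ already fixed, since the lemma guarantees agreement on $W_t$ for $t\neq k$. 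After $n$ steps, $\eta_n\circ\cdots\circ\eta_1\circ\varphi$ is the identity on every $W_i$, and hence on $S$, because $S\subseteq\bigcup_i W_i$. It is therefore $\Id$. So $\varphi=(\eta_n\circ\cdots\circ\eta_1)^{-1}\in\T\times\p_1$.

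\emph{Main obstacle.} The only delicate point is the input to Lemma \ref{local adjustment} in the case $j\in I_2$: the claim that $\varphi|_{W_j}$ sends $s_j$ to $s_j$ or $s_0s_j$. This needs $\varphi(s_0)=s_0$, which holds because $\langle s_0\rangle=\bigcap_i W_i$ and the unique involution generating it is fixed. It also needs $\varphi(s_j)\neq s_0$, which holds because $\varphi$ is injective. In $W_j\cong\mathbb{Z}_2\times\mathbb{Z}_2$ these two facts leave exactly the two options. I am taking this, together with the analogous dihedral statement for $j\notin I_2$, as already established inside the lemma, so no further work is needed beyond the bookkeeping above.
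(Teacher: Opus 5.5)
Your proof is correct and is essentially the argument the paper has in mind when it calls this corollary ``immediate'' after Lemma~\ref{local adjustment}. You check the forward direction on the generators $\tau_i$ and $\phi_{i,t}$, and for the converse you iterate Lemma~\ref{local adjustment} over $j\in I$ until the composite is the identity on $S$.
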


To determine the complete structure of $\p_2$, we consider below some subgroups of it.
\begin{itemize}
\item Let $\p_{2}^e:=\langle \psi_{i,j}\mid i,j\in I_\even,~i\neq j\rangle$ if $n_e \geq 2$, and set $\p_{2}^e = 1$ otherwise.
\item Let $\p_2^o:=\langle \psi_{i,j}\mid i\in I_\even,~j\in I_\odd\rangle$ if $n_o\geq 1$, and set $\p_2^o=1$ otherwise.
\end{itemize}
By construction, the subgroup $\p_2^e$ preserves $\Delta$ set-wise, whereas $\p_2^o$ fixes it pointwise.

For an irreducible RACG group $G$, a presentation for the special automorphism group $\Spe(G)$ is known \cite{Mu-96, Mu-98}.  By applying these results to $\Delta  \cong \U_{n_e}$, we obtain just a generating set of $\Spe(\Delta)$ in the following lemma, omitting the relations, which are typographically lengthy and not required for our purposes. This explicit generating set subsequently allows us to determine the structure of $\p_2^e$.

\begin{lemma}
    The group of special automorphisms $\Spe(\Delta)$ is generated by the set $\{\theta_{i,j}\mid i,j\in I_\even,~i\neq j\}$, where the action of each $\theta_{i,j}$ on the generator of $\Delta$ is given by
\begin{equation*}
\theta_{i,j}(\Delta_k):=
\begin{cases}
\Delta_i\Delta_j\Delta_i,&\text{if } k=j,\\
\Delta_k,&\text{if } k\neq j.
\end{cases}
\end{equation*}
\end{lemma}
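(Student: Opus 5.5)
The plan is to reduce the statement to Mühlherr's description of special automorphisms of universal Coxeter groups \cite{Mu-96}, once we know that $\Delta$ is freely generated, as a Coxeter group, by the involutions $\Delta_j$.

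First, I would record that $\Delta=\langle \Delta_j \mid j\in I_\even\rangle$ is a universal Coxeter group of rank $n_e$ with Coxeter generating set $\{\Delta_j\}_{j\in I_\even}$. This is the content of Lemma \ref{structure of the centralizer for s0}: each $\Delta_j$ is an involution, and there are no relations between distinct $\Delta_j$'s. In the statement, $\Spe(\Delta)$ is understood with respect to this system. It is the group of automorphisms sending each $\Delta_k$ to a conjugate of itself, which is equivalent to preserving the conjugacy classes of involutions. Indeed, in $\U_{n_e}$ every involution is conjugate to a unique generator, because the abelianisation is $(\mathbb{Z}_2)^{n_e}$ and separates the generators.

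Second, I would check that each $\theta_{i,j}$ defines an element of $\Spe(\Delta)$. Since $\Delta$ has only the relations $\Delta_k^2=1$, any assignment of involutions to the generators extends to an endomorphism. The element $\Delta_i\Delta_j\Delta_i$ is an involution, so $\theta_{i,j}$ is an endomorphism. It satisfies $\theta_{i,j}^2=\Id$, because $\Delta_i$ is fixed and so $\theta_{i,j}(\Delta_i\Delta_j\Delta_i)=\Delta_i\Delta_i\Delta_j\Delta_i\Delta_i=\Delta_j$. Hence $\theta_{i,j}$ is an automorphism, and it sends each generator to a conjugate of itself.

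Third, for the generation statement, I would cite \cite{Mu-96}: for $\U_n=\langle x_1,\dots,x_n\rangle$, $\Spe(\U_n)$ is generated by the partial conjugations $x_j\mapsto x_ix_jx_i$ that fix all $x_k$ with $k\neq j$. Transporting this along the isomorphism $\U_{n_e}\to\Delta$, $x_j\mapsto\Delta_j$, identifies these partial conjugations with the $\theta_{i,j}$, which gives the claim. If $n_e=1$, then $\Spe(\Delta)$ is trivial and the generating set is empty, so the statement holds vacuously.

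If a self-contained argument were wanted instead of the citation, the main obstacle would be proving generation directly. I would argue by induction on the total length $\sum_k |w_k|$, where $\varphi(\Delta_k)=w_k\Delta_k w_k^{-1}$ with each $w_k$ reduced. The step is a Nielsen/Whitehead-type peeling argument: if some $w_j$ is nontrivial, choose one of maximal length and let $\Delta_i$ be its first letter. Composing $\varphi$ with suitable $\theta_{i,\cdot}$ should strictly decrease the total length. The delicate point is showing that such an $i$ always exists, that is, that some generator can be peeled off. This would use that $\varphi$ is surjective, together with a ping-pong argument in the free product $\ast_k\mathbb{Z}_2$. Given the paper's remark that it applies \cite{Mu-96, Mu-98}, I would rely on the citation.
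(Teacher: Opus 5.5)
Your proposal is correct and matches the paper, which gives no argument beyond applying Mühlherr's description of the special automorphisms of universal (right-angled) Coxeter groups \cite{Mu-96, Mu-98} to $\Delta \cong \U_{n_e}$. Your checks that each $\theta_{i,j}$ is an involutive automorphism lying in $\Spe(\Delta)$, and your treatment of the case $n_e=1$, are sound additions. One small imprecision: the fact that every involution of $\Delta$ is conjugate to some $\Delta_k$ comes from the free-product structure, not from the abelianisation; the abelianisation only gives uniqueness.
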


%

\begin{lemma}\label{structure of E}
$\p_2^e \cong \Spe(\U_{n_e})$.
\end{lemma}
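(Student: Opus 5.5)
The plan is to restrict $\p_2^e$ to the subgroup $\Delta \cong \U_{n_e}$ and show that this restriction is an isomorphism onto $\Spe(\Delta)$. Every generator $\psi_{i,j}$ of $\p_2^e$ (with $i,j\in I_\even$, $i\neq j$) preserves $\Delta$ set-wise, so restriction gives a homomorphism
$$\rho:\p_2^e\to\Aut(\Delta),\qquad \varphi\mapsto \varphi|_{\Delta}.$$
The proof then splits into three steps: compute the generators explicitly, establish surjectivity onto $\Spe(\Delta)$, and establish injectivity.

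\textbf{Computing $\rho$ on generators.} For $i,j\in I_\even$ with $i\neq j$, the automorphism $\psi_{i,j}$ fixes $s_0$ and sends $s_j$ to $\Delta_i s_j\Delta_i$. Since $\Delta_i$ commutes with $s_0$, we get
$$\psi_{i,j}(\Delta_j)=(s_0\,\Delta_i s_j\Delta_i)^{m_j/2}=\Delta_i (s_0s_j)^{m_j/2}\Delta_i=\Delta_i\Delta_j\Delta_i.$$
For $k\neq j$, $\psi_{i,j}$ fixes $s_0$ and $s_k$, hence fixes $\Delta_k$. Therefore $\rho(\psi_{i,j})=\theta_{i,j}$. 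By the preceding lemma the $\theta_{i,j}$ generate $\Spe(\Delta)$, so $\rho(\p_2^e)=\Spe(\Delta)$.

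\textbf{Injectivity.} This is the main step. Suppose $\varphi\in\p_2^e$ satisfies $\varphi|_\Delta=\Id$. Since every generator fixes $s_0$ and all $s_k$ with $k\in I_\odd$, so does $\varphi$. For $j\in I_\even$, each generator $\psi_{i,j'}$ sends $s_j$ to a conjugate of $s_j$ by an element of $\Delta\le \C_W(s_0)$, and $\Delta$ is preserved. Inductively, this gives $\varphi(s_j)=w_j s_j w_j^{-1}$ with $w_j\in\Delta$.

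\begin{itemize}
\item \emph{Normal form for $w_j$.} Multiplying $w_j$ on the right by $\Delta_j$ if necessary, assume $w_j$ is reduced in the free-product normal form of $\U_{n_e}$ and does not end in $\Delta_j$. (Right multiplication by $\Delta_j$ does not change $w_j s_j w_j^{-1}$, since $\Delta_j$ commutes with $s_j$.)
\item \emph{Comparing images of $\Delta_j$.} Then
$$\Delta_j=\varphi(\Delta_j)=(s_0\,w_js_jw_j^{-1})^{m_j/2}=w_j\Delta_j w_j^{-1},$$
using that $w_j$ commutes with $s_0$. So $w_j$ centralizes $\Delta_j$ in $\U_{n_e}$.
\item \emph{Conclusion.} In $\U_{n_e}$ the centralizer of $\Delta_j$ is $\langle\Delta_j\rangle$, so $w_j\in\{1,\Delta_j\}$. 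The normalization forces $w_j=1$.
\end{itemize}

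Hence $\varphi$ fixes every generator, so $\varphi=\Id$, and $\rho$ is injective. This gives $\p_2^e\cong\Spe(\Delta)\cong\Spe(\U_{n_e})$.

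The case $n_e=1$ is trivial, since both sides are trivial: $\Spe(\mathbb{Z}_2)=1$.

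\textbf{Expected obstacle.} The delicate point is the claim that every $\varphi\in\p_2^e$ sends $s_j$ to a $\Delta$-conjugate of $s_j$. Composition might appear to produce conjugators such as $\varphi'(w)$ lying outside $\Delta$. This is handled by noting that each generator maps $\Delta$ into itself, so the conjugating elements stay in $\Delta$ throughout, by induction on word length in the generators.
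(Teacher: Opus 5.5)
Your proposal is correct and follows essentially the same route as the paper: restriction to $\Delta$ sends $\psi_{i,j}\mapsto\theta_{i,j}$, which gives surjectivity onto $\Spe(\Delta)$. Injectivity then follows from $w_j\in \C_\Delta(\Delta_j)=\langle\Delta_j\rangle\le \C_W(s_j)$.

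Your explicit induction showing that the conjugating elements stay in $\Delta$ fills in a step the paper only asserts ``by the definition of $\p_2^e$''. Your normalization of $w_j$ is harmless but unnecessary, since $\langle\Delta_j\rangle$ already centralizes $s_j$.
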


\begin{proof}
We begin by noting that for each pair of distinct indices $i, j \in I_\even$, the restriction $\psi_{i,j}|_{\Delta}$ coincides with the generator $\theta_{i,j}$ of $\Spe(\Delta)$. This correspondence induces a canonical surjective homomorphism $\res :\p_2^e\to \Spe(\Delta)$ given by $$\res(\psi_{i,j})=\theta_{i,j}.$$ To establish injectivity, let $\psi\in \ker(\res)$. By the definition of $\p_2^e$, we have $\psi(s_i) = x_i s_i x_i^{-1}$ for some $x_i \in \Delta$ for each $i \in I_\even$. Since $\psi(s_0) = s_0$, we obtain that $$\psi(\Delta_i)=(s_0x_is_ix_i^{-1})^\frac{m_i}{2} = x_i\Delta_i x_i^{-1}.$$ On the other hand, since $\psi$ is in the kernel, it must fix $\Delta$ pointwise; i.e., $\psi(\Delta_i) = \Delta_i$ for all $i$. Hence, $x_i\in \Delta\cap \C_W(\Delta_i) =\langle \Delta_i\rangle\leq \C_W(s_i)$. It follows that $\psi(s_i) = x_i s_i x_i^{-1} = s_i$ for all $i \in I_\even$. Since $\psi$ also fixes the generators $s_0$ and $s_j$ for all $j \in I_\odd$, we conclude that $\psi = \Id$. Thus, $\ker(\res) = 1$, establishing the isomorphism $\p_2^e\cong \Spe(\U_{n_e})$
\end{proof}

\begin{corollary}\label{characterization of E}
$\psi\in \p_2^e$ if and only if
$
        \psi(s_i)=\begin{cases}
        s_i, & \text{if $i\in \bar{I}\setminus I_\even$,}\\
            x_is_ix_i^{-1}, & \text{if $i\in I_\even$,}
        \end{cases}
$
where $x_i\in \Delta$. 
\end{corollary}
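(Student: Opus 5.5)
We prove the two implications separately, leaning on the isomorphism $\res\colon \p_2^e \to \Spe(\Delta)$ from Lemma \ref{structure of E} and on the argument used there for injectivity.

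\emph{Forward direction.} Every generator $\psi_{i,j}$ of $\p_2^e$ with $i,j\in I_\even$ has the stated form: it fixes $s_0$ and every $s_k$ with $k\in I_\odd$, and it sends each $s_k$ with $k\in I_\even$ either to $s_k$ or to $\Delta_i s_k\Delta_i$. So the point is to show that the set $\mathcal{E}$ of automorphisms of the given shape is closed under composition. Take $\psi,\psi'\in\mathcal{E}$ with conjugators $x_i,x_i'\in\Delta$. Two facts follow at once from the shape of $\psi'$:
\begin{itemize}
\item[(a)] $\psi'(s_0)=s_0$;
\item[(b)] $\psi'(\Delta_k)=(s_0x_k's_kx_k'^{-1})^{m_k/2}=x_k'\Delta_k x_k'^{-1}\in\Delta$, the same computation as in Lemma \ref{structure of E}.
\end{itemize}
Hence $\psi'(\Delta)\subseteq\Delta$, and therefore
\[
\psi'\psi(s_i)=\psi'(x_i)\,x_i'\,s_i\,x_i'^{-1}\,\psi'(x_i)^{-1},
\]
where the conjugator $\psi'(x_i)x_i'$ lies in $\Delta$. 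The generators $s_k$ with $k\notin I_\even$ stay fixed, so $\psi'\psi\in\mathcal{E}$. This gives $\p_2^e\subseteq\mathcal{E}$.

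\emph{Converse.} Let $\psi$ be an automorphism of the given form. By (b), $\psi$ restricts to an endomorphism of $\Delta\cong\U_{n_e}$ that maps each generator $\Delta_k$ to a conjugate of itself.
\begin{itemize}
\item First we check that this restriction is an automorphism of $\Delta$. The cleanest route is to note that $\psi^{-1}$ also has the given form: $\psi^{-1}$ fixes $s_0$ and the odd leaves, and the classes of the $s_k$ are permuted. We can argue this via Corollary \ref{reflections map to its own conjugate or to maximal element} and the centralizer description, or, more simply, by comparing the images of $\psi^{-1}(s_i)$ in the abelianization. So $\psi|_\Delta\in\Spe(\Delta)$.
\item Choose $\psi_0\in\p_2^e$ with $\res(\psi_0)=\psi|_\Delta$, using surjectivity of $\res$.
\item Then $\psi_0^{-1}\psi$ has the given form, by the forward direction and closure, and it fixes $\Delta$ pointwise.
\item Repeating the injectivity argument of Lemma \ref{structure of E}, each conjugator lies in $\Delta\cap\C_W(\Delta_i)=\langle\Delta_i\rangle\le\C_W(s_i)$, so $\psi_0^{-1}\psi=\Id$ and $\psi=\psi_0\in\p_2^e$.
\end{itemize}

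\emph{Main obstacle.} The delicate step is showing that $\psi|_\Delta$ is surjective onto $\Delta$. An endomorphism of $\U_{n_e}$ that sends each generator to a conjugate of itself need not be surjective. If the inverse-closure argument is awkward, the fallback is to show directly that $\psi(W)=W$ forces $\psi(\Delta)=\Delta$. Since $\psi$ fixes $s_0$, it preserves $\C_W(s_0)=\langle s_0\rangle\times\Delta$. Applying the same reasoning to $\psi^{-1}$, which fixes $s_0$ and so preserves $\C_W(s_0)$, gives $\psi(\C_W(s_0))=\C_W(s_0)$. Combined with $\psi(\Delta)\subseteq\Delta$ and the fact that $\Delta$ has index $2$ in $\C_W(s_0)$ with $s_0\notin\psi(\Delta)$, this yields $\psi(\Delta)=\Delta$.
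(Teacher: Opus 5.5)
Your proposal is correct and follows the paper's own route. You restrict $\psi$ to $\Delta$, observe that the restriction lies in $\Spe(\Delta)$, and lift through the isomorphism $\res$ of Lemma \ref{structure of E}; the other implication comes from the shape of the generators, which you rightly check is closed under composition.

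Your fallback argument is the one to keep: $\psi$ and $\psi^{-1}$ both preserve $\C_W(s_0)=\langle s_0\rangle\times\Delta$, so $\psi(\Delta)\subseteq\Delta$ forces $\psi(\Delta)=\Delta$. This supplies the surjectivity that the paper only asserts. The abelianization sketch, as phrased, does not do this job, because $d s_i d^{-1}$ and $d s_0 s_i s_0 d^{-1}$ have the same image in $W^{ab}$, so it cannot rule out a conjugator in $s_0\Delta$.
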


\begin{proof}
We have $\psi(\Delta_i)=x_i\Delta_ix_i^{-1}$ for each $i\in I_\even$. Therefore, the restriction of $\psi$ on $\Delta$ induces an automorphism of $\Delta$. Then, by Lemma $\ref{structure of E}$, we get $\psi|_{\Delta}\in \Spe(\U_{n_e})$, and consequently $\psi\in \p_2^e$. The converse follows from the definition of $\p_2^e$.
\end{proof}

\begin{lemma}
$\p_2^o\cong (\U_{n_e})^{n_o}$.
\end{lemma}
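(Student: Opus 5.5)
We need to show $\p_2^o\cong(\U_{n_e})^{n_o}$. The approach is to describe $\p_2^o$ explicitly by its action on the odd leaves: every element should conjugate each $s_j$ ($j\in I_\odd$) by an element of $\Delta$ and fix everything else. Here $\p_2^o=\langle \psi_{i,j}\mid i\in I_\even,\ j\in I_\odd\rangle$.

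\textbf{Step 1: the candidate isomorphism.} For each $j\in I_\odd$, the generators $\psi_{i,j}$ with $i\in I_\even$ fix every generator except $s_j$, and they fix $\Delta$ pointwise. So any word in them sends $s_j$ to $x s_j x^{-1}$ for some $x\in\Delta$ and fixes all other $s_k$. Composing two such automorphisms $\alpha_x$, $\alpha_y$ (where $\alpha_x(s_j)=xs_jx^{-1}$) gives $\alpha_x\alpha_y(s_j)=\alpha_x(y)\,x s_j x^{-1}\alpha_x(y)^{-1}=yxs_jx^{-1}y^{-1}$, because $\alpha_x$ fixes $\Delta$. Hence $x\mapsto\alpha_x$ is an anti-homomorphism $\Delta\to\Aut(W)$. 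Composing with inversion, or equivalently using the fact that $\U_{n_e}$ is isomorphic to its opposite group, gives a homomorphism from $\Delta$ onto $Q_j:=\langle\psi_{i,j}\mid i\in I_\even\rangle$. It is well defined and surjective because $\Delta=\langle\Delta_i\rangle$ and $\alpha_{\Delta_i}=\psi_{i,j}$.

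\textbf{Step 2: injectivity.} If $\alpha_x=\Id$, then $x\in \C_W(s_j)$, and $\C_W(s_j)=\langle s_j\rangle$ by Lemma \ref{structure of the centralizer for s0} since $j\in I_\odd$. Now $\Delta\cap\langle s_j\rangle=1$: $s_j$ is conjugate to $s_0$, whereas every element of $\Delta$ lies in the kernel of the homomorphism $W\to\mathbb{Z}_2$ sending $s_0\mapsto 1$ and all other generators to $0$. Indeed, each $\Delta_i=(s_0s_i)^{m_i/2}$ contains $m_i/2$ copies of $s_0$, so its image is $m_i/2 \bmod 2$, which is not always $0$. So this parity argument needs repair. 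A better route is the observation in Section 2 that $\Delta\cap W_j=1$ for $j\in I_\odd$, which gives $x=1$ directly. Hence $Q_j\cong\U_{n_e}$.

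\textbf{Step 3: direct product.} The subgroups $Q_j$ for distinct $j\in I_\odd$ commute elementwise, because they have disjoint supports: each moves only its own $s_j$, and the conjugating elements lie in $\Delta$, which is fixed by all of them. They also generate $\p_2^o$. It remains to show the product map $\prod_j Q_j\to\p_2^o$ is injective. An element $(\alpha^{(j)}_{x_j})_j$ acts on $s_j$ exactly as $\alpha^{(j)}_{x_j}$ alone, so it is trivial only if each $x_j=1$ by Step 2. This yields $\p_2^o\cong(\U_{n_e})^{n_o}$.

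\textbf{Main obstacle.} The delicate point is Step 2, namely that no nontrivial $x\in\Delta$ centralizes $s_j$. I would rely on Lemma \ref{structure of the centralizer for s0} together with $\Delta\cap W_j=1$.
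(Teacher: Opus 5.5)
Your proof is correct and is essentially the paper's argument: for each $j\in I_{\odd}$ you map $\Delta\cong\U_{n_e}$ onto $\langle \psi_{i,j}\mid i\in I_{\even}\rangle$, prove injectivity via $\C_W(s_j)\cap\Delta=1$, and get the direct product from disjoint supports. (The paper defines the map directly by $\Delta_i\mapsto\psi_{i,j}$, which is legitimate because the $\psi_{i,j}$ are involutions and $\U_{n_e}$ has no other relations, so your anti-homomorphism remark is handled implicitly.) You should delete the abandoned parity detour in Step 2. It is flawed beyond what you noticed: sending $s_0\mapsto 1$ and all other generators to $0$ does not define a homomorphism to $\mathbb{Z}_2$ at all when $n_o\geq 1$, since $s_0$ is conjugate to each $s_j$ with $j\in I_{\odd}$.
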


\begin{proof}
For each $j \in I_\odd$, let $\p_2^o(j) := \langle \psi_{i,j} \mid i \in I_\even \rangle \leq \p_2^o$. Define a map $f_j: \Delta \to \p_2^o (j)$ by $f_j(\Delta_i) = \psi_{i,j}$ for each $i \in I_\even$. Since $\Delta$ is a universal Coxeter group, this map extends to a surjective group homomorphism. To show injectivity, let $d =\Delta_{i_1}\Delta_{i_2}\cdots\Delta_{i_k}\in \ker(f_j)$. Then $\psi_{i_1,j}\psi_{i_2,j}\cdots\psi_{i_k,j}=\Id$, and by definition, $d\in \C_W(s_j)\cap \Delta=1$. Hence, $\ker(f_j)=1$. Thus, $\p_2^o (j) \cong \Delta \cong \U_{n_e}$ for every $j \in I_{\text{odd}}$.

Next, we consider the structure of $\p_2^o = \langle \p_2^o(j) \mid j \in I_\odd \rangle$. For distinct indices $j, l \in I_\odd$, the generators of $\p_2^o(j)$ and $\p_2^o(l)$ commute because their actions are localized to disjoint sets of leaf generators $\{s_j\}$ and $\{s_l\}$ respectively. Furthermore, any element of $\langle \p_2^o(l) \mid l \in I_\odd \setminus \{j\} \rangle$ fixes $s_j$, while any non-trivial element of $\p_2^o(j)$ acts non-trivially on $s_j$. Consequently, the intersection $\p_2^o(j) \cap \langle \p_2^o(l) \mid l \in I_\odd \setminus \{j\} \rangle$ is trivial. This ensures that $\p_2^o$ is the direct product of the subgroups $\{\p_2^o(j)\}_{j \in I_\odd}$. We therefore conclude that $\p_2^o = \prod_{j \in I_\odd} \p_2^o(j) \cong (\U_{n_e})^{n_o}.$
\end{proof}

\begin{lemma} \label{intersection of P and C}
The intersection of $\p_1$ and $\p_2$ is given by
\begin{equation*}
\p_3:=\p_1\cap \p_2=\begin{cases}
\langle \phi_{j,{m_j-1}}\mid j\in I\setminus I_2\rangle, & \text{if $n_2\geq 1$},\\
1, & \text{if $n_2=0$}.\\
\end{cases}
\end{equation*}
\end{lemma}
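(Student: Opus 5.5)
The plan is to prove the two inclusions separately, with the reverse inclusion $\p_1\cap\p_2\subseteq \p_3$ carrying the real content. The key invariant is that every element of $\p_1$ fixes $s_0$ and maps each $s_j$ into $W_j$. Every element of $\p_2$ fixes $s_0$ and maps each leaf $s_j$ to a conjugate $c_j s_j c_j^{-1}$ with $c_j\in \C_W(s_0)$. This holds because each generator $\psi_{i,j},\sigma_{i,j}$ conjugates $s_j$ by $\Delta_i$ or $s_i$, both lying in $\C_W(s_0)$ (by Lemma \ref{structure of the centralizer for s0}, $s_i\in\C_W(s_0)$ when $i\in I_2$). Moreover, $\p_2$ preserves $\C_W(s_0)$ (it fixes $s_0$), so the conjugators compose and stay in $\C_W(s_0)$.

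\textbf{The inclusion $\p_3\subseteq\p_1\cap\p_2$.} If $n_2\ge1$, pick $i\in I_2$. For $j\in I\setminus I_2$ the paper already observed that $\phi_{j,m_j-1}=\psi_{i,j}\sigma_{i,j}\in\p_2$, and it lies in $\p_1$ by definition.

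\textbf{The inclusion $\p_1\cap\p_2\subseteq\p_3$.}
1. Take $\varphi\in\p_1\cap\p_2$. By Lemma \ref{Structure of P}, $\varphi=\prod_j\phi_{j,t_j}$, so $\varphi(s_j)=s_j(s_0s_j)^{t_j-1}\in W_j$.
2. Since $\varphi\in\p_2$, we also have $\varphi(s_j)=c_js_jc_j^{-1}$ with $c_j\in\C_W(s_0)$.
3. Hence $c_js_jc_j^{-1}$ is a reflection of $W_j$ conjugate to $s_j$ by an element centralizing $s_0$. The aim is to show it equals $s_j$, or additionally $s_0s_js_0$ when $n_2\ge1$.
4. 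Inside $W_j$, the reflections of the form $x s_j x^{-1}$ with $x\in \C_{W_j}(s_0)=\langle s_0\rangle$ or $\langle s_0,\Delta_j\rangle$ are only $s_j$ and $s_0s_js_0$, since $\Delta_j$ either centralizes $W_j$ or conjugates $s_j$ to $s_0 s_j s_0$-type elements.

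The main obstacle is reducing the conjugator $c_j\in\C_W(s_0)$ to one inside $W_j$. For this I would use that the map $c\mapsto cs_jc^{-1}$ lands in $W_j$ only if $c\in W_j\,\C_W(s_j)$; this is standard, since parabolic $W_j$ containing both $s_j$ and $cs_jc^{-1}$ gives $c\in W_j N$ via Brink's/Deodhar's description of conjugating reflections. Then $c_j\in \C_W(s_0)\cap W_j\C_W(s_j)$. Here $\C_W(s_j)$ is $\langle s_j\rangle$ or $\langle s_j,\Delta_j\rangle\subseteq W_j$ by Lemma \ref{structure of the centralizer for s0}, so $c_j\in W_j\cap\C_W(s_0)$. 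This gives $t_j\in\{1,m_j-1\}$ (note $s_0s_js_0=s_j(s_0s_j)^{m_j-2}$, i.e. $t=m_j-1$).

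It remains to show that when $n_2=0$ the value $t_j=m_j-1$ cannot occur. Here I would work in the quotient $W\to W/\langle\langle s_k\mid k\neq j\rangle\rangle$, or better, use a parity/abelianization-type invariant. With $n_2=0$, every generator of $\p_2$ conjugates by $\Delta_i$ ($i\in I_\even\setminus I_2$), which lies in $\Delta\subseteq W_{\{s_0\}\cup I_\even}$. The key step is to show that the conjugator of $s_j$ under any element of $\p_2$ lies in the subgroup $\Delta$, so that $c_j\in\Delta\cap W_j$. That intersection is $\langle\Delta_j\rangle$ if $j\in I_\even$ and $1$ otherwise, and conjugating $s_j$ by it returns $s_j$. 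Hence $t_j=1$ and $\varphi=\Id$. To get $c_j\in\Delta$, I would induct on word length in the generators, using that $\p_2$ preserves $\Delta$ (as observed earlier).
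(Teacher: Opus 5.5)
Your proposal is correct and follows essentially the same route as the paper. The conjugator of $s_j$ is forced into $\C_W(s_0)\cap W_j$ (respectively $\Delta\cap W_j$ when $n_2=0$), which gives $\varphi(s_j)\in\{s_j,s_0s_js_0\}$ (respectively $\varphi(s_j)=s_j$), and the reverse inclusion comes from $\phi_{j,m_j-1}=\psi_{i,j}\sigma_{i,j}$.

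One step needs tidying: the reduction of $c_j$ into $W_j$. The general claim $cs_jc^{-1}\in W_j\Rightarrow c\in W_j\C_W(s_j)$ fails in arbitrary Coxeter groups, and you do not need Brink or Deodhar for it here. Since $\varphi(s_j)=s_j(s_0s_j)^{t_j-1}$ is already of the form $x_js_jx_j^{-1}$ with $x_j\in W_j$, you get $x_j^{-1}c_j\in\C_W(s_j)\subseteq W_j$ directly; the paper leaves this step implicit.
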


\begin{proof}
 Let $\psi\in \p_3$. Then $\psi(s_0)=s_0$ and $\psi(s_i)=x_is_ix_i^{-1}$, where $x_i\in W_i$ for each $i\in I$. If $n_2\geq 1$, then $x_i\in \C_W(s_0)\cap W_i$ for all $i\in I$. This intersection is precisely $\langle s_0 \rangle$ for $i \in I_\odd$ and $\langle s_0, \Delta_i \rangle$ for $i \in I_\even$. Furthermore, $\langle s_0, \Delta_i\rangle\leq \langle s_0\rangle\times \C_W(s_i)$ for $i\in I_\even$. Hence, without loss of generality, we may assume that $x_i\in \langle s_0\rangle$ for each $i\in I$. Now, if $x_i = s_0$, then $\psi(s_i) = s_0 s_i s_0 = s_i (s_0 s_i)^{m_i-2} =  \phi_{i,{m_i-1}}(s_i)$. Therefore, $\psi\in \langle \phi_{i,{m_i-1}}\mid i\in I\setminus I_2\rangle$. In case $n_2=0$,  we have $x_i\in \Delta\cap W_i\leq \C_W(s_i)$ for all $i\in I$.  Consequently, $\psi= \Id$. Since $\psi_{i,j}\sigma_{i,j}=\phi_{j,{m_j-1}}$, for any $i\in I_2$ and $j\in I\setminus I_2$, the converse is immediate.
\end{proof}

\begin{lemma}\label{center of C is same as A}
$
    \Z(\p_2)=\begin{cases}
       \p_2, & \text{if}\; n_e=1,\\
    \p_3, & \text{if}\; n_e\geq 2.    
    \end{cases}
$
\end{lemma}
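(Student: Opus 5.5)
For $n_e=1$ there is a single even index $i$, so $\p_2=\p_{2,i}$. This is an elementary abelian $2$-group by Section 2, so $\Z(\p_2)=\p_2$ and nothing more is needed. The real work is the case $n_e\geq 2$. There I would prove the two inclusions $\p_3\subseteq \Z(\p_2)$ and $\Z(\p_2)\subseteq \p_1\cap\p_2=\p_3$, using Lemma \ref{intersection of P and C} for the last equality.

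\emph{$\p_3$ is central.} If $n_2=0$ this is trivial, so assume $n_2\geq 1$. Then $\p_3$ is generated by the maps $\phi_{j,m_j-1}$, and each of these sends $s_j\mapsto s_0s_js_0$ and fixes every other generator. Every $\psi\in\p_2$ fixes $s_0$ and sends each $s_k$ to $x_ks_kx_k^{-1}$ with $x_k\in \C_W(s_0)$. By Lemma \ref{Structure of P}(ii), $\phi:=\phi_{j,m_j-1}$ fixes $\C_W(s_0)$ pointwise. Hence
\[
\phi\psi(s_j)=x_j\,s_0s_js_0\,x_j^{-1}
\quad\text{and}\quad
\psi\phi(s_j)=s_0\,x_js_jx_j^{-1}\,s_0 .
\]
These agree because $x_j$ commutes with $s_0$. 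On the generators $s_k$ with $k\neq j$ the two composites agree trivially, so $\phi$ commutes with every element of $\p_2$.

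\emph{Central elements lie in $\p_3$.} Let $z\in\Z(\p_2)$ and write $z(s_k)=x_ks_kx_k^{-1}$ with $x_k\in\C_W(s_0)=\langle s_0\rangle\times\Delta$. For $i\in I_\even$ we get $z(\Delta_i)=x_i\Delta_ix_i^{-1}$. Since $s_0$ commutes with $\Delta_i$, we may take the conjugator in $\Delta$, so $z|_\Delta\in\Spe(\Delta)$. Because $z$ commutes with $\p_2^e$ and $\res:\p_2^e\to\Spe(\Delta)$ is an isomorphism (Lemma \ref{structure of E}), $z|_\Delta$ lies in $\Z(\Spe(\U_{n_e}))$. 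I would then show that this center is trivial for $n_e\geq2$:
\begin{itemize}
\item for $n_e=2$, $\Spe(\U_2)=\langle\theta_{1,2},\theta_{2,1}\rangle$ is infinite dihedral;
\item in general, one checks directly that an element commuting with all $\theta_{i,j}$ must fix each $\Delta_k$, using the normal form in the free product $\U_{n_e}$.
\end{itemize}
This is the step I expect to be the main obstacle, and it may require the explicit description of $\Spe$ from \cite{Mu-96, Mu-98}.

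Granting that the center is trivial, $z$ fixes $\Delta$ pointwise. Then for each $i\in I_\even$ we have $x_i\in\C_W(s_0)\cap\C_W(\Delta_i)=\langle s_0,\Delta_i\rangle$, and since $\Delta_i$ centralizes $s_i$, we get $z(s_i)\in\{s_i,\,s_0s_is_0\}$. For $j\in I_\odd$, pick any $i\in I_\even$ and compare $z\psi_{i,j}(s_j)$ with $\psi_{i,j}z(s_j)$. Here $\psi_{i,j}$ fixes $s_0$ and $\Delta$, hence fixes $x_j$, while $z(\Delta_i)=\Delta_i$. The comparison shows that $x_j^{-1}\Delta_ix_j\Delta_i$ centralizes $s_j$. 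By Lemma \ref{structure of the centralizer for s0}, $\C_W(s_j)=\langle s_j\rangle$, and this meets $\C_W(s_0)$ trivially. Hence $x_j$ commutes with every $\Delta_i$. In $\langle s_0\rangle\times\U_{n_e}$ with $n_e\geq2$, this forces $x_j\in\langle s_0\rangle$.

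Thus $z$ fixes $s_0$ and sends each $s_k$ to $s_k$ or $s_0s_ks_0$. This means $z$ is a product of maps $\phi_{k,m_k-1}$ (for $k\in I_2$ the conjugation by $s_0$ is trivial), so $z\in\p_1$. Since also $z\in\p_2$, Lemma \ref{intersection of P and C} gives $z\in\p_3$.
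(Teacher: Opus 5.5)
Your argument is sound in outline, but the one step you leave open (``granting that the center is trivial'') is asserted rather than proved. It is also much easier than you expect, and no presentation of $\Spe$ is needed. Inner automorphisms preserve conjugacy classes, so $\Inn(\Delta)\le\Spe(\Delta)$, and $\Delta\cong\U_{n_e}$ is centerless for $n_e\ge 2$. Hence if $f$ is central in $\Spe(\Delta)$, then $\widehat{f(d)}=f\widehat{d}f^{-1}=\widehat{d}$ forces $f(d)=d$ for every $d\in\Delta$.

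You can even bypass $\Spe(\Delta)$ and Lemma~\ref{structure of E}. For $j\in I_\even$ one checks directly that $\widehat{\Delta_j}=\prod_{k\neq j}\psi_{j,k}\in\p_2$. Centrality of $z$ then gives $\widehat{z(\Delta_j)}=z\widehat{\Delta_j}z^{-1}=\widehat{\Delta_j}$, and $z(\Delta_j)=\Delta_j$ because $\Z(W)=1$.

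With this line inserted, your proof is complete. The even leaves are handled by $\C_W(s_0)\cap\C_W(\Delta_i)=\langle s_0,\Delta_i\rangle$. The odd leaves are handled by comparing $z\psi_{i,j}$ with $\psi_{i,j}z$, together with $\C_W(s_j)\cap\C_W(s_0)=1$ and $\Z(\U_{n_e})=1$. The conclusion $z\in\p_1\cap\p_2=\p_3$ then follows. Your treatment of $n_e=1$ and of $\p_3\subseteq\Z(\p_2)$ is the same as the paper's.

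For $\Z(\p_2)\subseteq\p_3$ the paper argues differently, with one normal-form argument applied uniformly to every leaf $s_i$:
\begin{enumerate}
\item It writes the conjugator as $w_i=s_0^{\epsilon}\Delta_{i_1}\cdots\Delta_{i_k}$ with $i_k\neq i$.
\item It applies $\psi_{j,i_k}$ for some $j\in I_\even\setminus\{i_k\}$; this is where $n_e\ge 2$ enters.
\item It observes that the reduced form of $\psi_{j,i_k}(w_i)$ ends in $\Delta_j$, so $\psi_{j,i_k}(w_i)\neq w_i$.
\item Centrality places $w_i^{-1}\psi_{j,i_k}(w_i)$ in $\C_W(s_i)\cap\C_W(s_0)\cap\Delta\subseteq\{1,\Delta_i\}$, and a length-parity count excludes $\Delta_i$, giving a contradiction.
\end{enumerate}
Your route replaces this word combinatorics, including the cancellation check hidden in the last-letter claim, by the centerlessness of $W$ and of $\U_{n_e}$ and the centralizer computations of Lemma~\ref{structure of the centralizer for s0}. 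It has to treat even and odd leaves separately, but it is more conceptual and easier to verify. Your final step through $\p_1\cap\p_2$ is also slightly cleaner than the paper's appeal to $\phi_{i,m_i-1}\in\p_3$, which is literally false when $n_2=0$.
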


\begin{proof}
If $n_e=1$, the definition implies $\p_2$ is abelian, so $\Z(\p_2)=\p_2$. We may therefore assume $n_e \geq 2$.

Let $\psi\in \Z(\p_2)$. Then for each $i\in \bar{I}$, $\psi(s_i)=w_is_iw_i^{-1}$ where $w_i\in \C_W(s_0)$. Recall that (Lemma \ref{structure of the centralizer for s0}) $\C_W(s_0) = \langle s_0 \rangle \times \Delta$, where $\Delta = \langle \Delta_i \mid i \in I_\even \rangle$ is a universal Coxeter group. Each element of $\Delta$ thus possesses a unique reduced expression in the letters $\Delta_i$. We claim that $w_i\in \langle s_0\rangle$ for all $i$. 

Suppose on the contrary that $w_i = s_0^{\epsilon} \Delta_{i_1} \Delta_{i_2} \cdots \Delta_{i_k}$ is the unique reduced expression with $\epsilon \in \{0,1\}$ and $k \geq 1$. Since $\Delta_i$ commutes with $s_i$, we can assume without loss of generality that $i_k \neq i$.
Since $\psi_{j,i_k} \in \p_2$ maps $\Delta_{i_k}$ to $\Delta_j \Delta_{i_k} \Delta_j$ while fixing all other $\Delta_l$ ($l \neq i_k$) and $s_0$, it can be readily verified that the unique reduced expression of $\psi_{j,i_k}(w_i)$ always terminates with the letter $\Delta_j$. Hence, $\psi_{j,i_k}(w_i) \neq w_i$. 

We show that this inequality directly contradicts the centrality of $\psi$ in $\p_2$. Since $\psi \in \Z(\p_2)$ and $\psi_{j,i_k}$ is an involutive automorphism, the relation $\psi_{j,i_k} \psi \psi_{j,i_k} = \psi$ holds. Evaluating this identity at $s_i$ yields: $$w_is_iw_i^{-1} = \psi (s_i) =  \psi_{j,i_k} \psi \psi_{j,i_k} (s_i) =  \psi_{j,i_k} \psi (s_i) = \psi_{j,i_k} (w_is_iw_i^{-1}) = \psi_{j,i_k}(w_i)s_i\psi_{j,i_k}(w_i)^{-1}.$$ Therefore $w_i^{-1}\psi_{j,i_k}(w_i)\in \C_W(s_i)$. Furthermore, because $\psi_{j,i_k}$ keeps $\C_W(s_0)$ invariant, it follows that $w_i^{-1}\psi_{j,i_k}(w_i)\in \C_W(s_i) \cap \C_W(s_0)$. This intersection which is $\{1\}$ if $i \in I_\odd$ and $\{1, \Delta_i\}$, if $i \in I_\even$. 

However, under the action of $\psi_{j,i_k}$, every letter $\Delta_i$ is mapped to an odd-length word (either of length $1$ or length $3$). Consequently, the total word length of $\psi_{j, i_k}(w_i)$ has the same parity as that of $w_i$, which implies that  $w_i^{-1}\psi_{j,i_k}(w_i)$ must expand to an even-length word in $\Delta$. Because $\Delta_i$ is a word of length $1$, it is excluded.
Thus, we conclude that $\psi_{j,i_k}(w_i)=w_i$,  which directly contradicts our earlier deduction that $\psi_{j,i_k}(w_i) \neq w_i$. Thus $w_i\in \langle s_0\rangle$ for each $i\in I$. Consequently, $\psi$ can be expressed in the form as $\prod_{i\in I}\big(\phi_{i,{m_i-1}}\big)^{\varepsilon_i}$ with $\varepsilon_i\in\{0,1\}$. Since $\phi_{i,{m_i-1}}\in \p_3$, we have  $\psi \in \p_3$, establishing $\Z(\p_2)\leq \p_3$.

Conversely, we show that $\p_3\leq \Z(\p_2)$. If $n_2=0$, then $\p_3=1\leq \Z(\p_2)$ is vacuously true. Assume $n_2\geq 1$. 
Let $\psi\in \p_2$. Then for each $i\in \bar{I}$, $\psi(s_i)=x_is_ix_i^{-1}$ where $x_i\in \C_W(s_0)$. Since $\phi_{i,{m_i-1}}$ acts as identity map on $\C_W(s_0)$ (Lemma \ref{Structure of P}), it follows that
\begin{align*}
\phi_{i,{m_i-1}}\psi \phi_{i,{m_i-1}}(s_i) & =\phi_{i,{m_i-1}}(s_0x_is_ix_i^{-1}s_0)=\phi_{i,{m_i-1}}(x_is_0s_is_0x_i^{-1})=x_is_ix_i^{-1}=\psi(s_i),&\\
\phi_{i,{m_i-1}}\psi \phi_{i,{m_i-1}}(s_j) & =\phi_{i,{m_i-1}}(x_js_jx_j^{-1})=x_js_jx_j^{-1}=\psi(s_j),\; \text{for all}\; j\neq i.&
\end{align*}
Because the identity $\phi_{i, m_i-1} \psi \phi_{i, m_i-1} = \psi$ holds on all generators, each $\phi_{i, m_i-1}$ commutes with $\psi$, implying $\phi_{i, m_i-1} \in \Z(\p_2)$. Since the group $\p_3$ is generated by these automorphisms, it follows that $\p_3 \leq \Z(\p_2)$. We conclude that $\Z(\p_2) = \p_3$.
\end{proof}

\begin{lemma}\label{structure of C}
$\p_2=(\p_2^o\rtimes \p_2^e) \times \p_3 \cong \begin{cases}
 (\U_{n_e})^{n_o}\rtimes \Spe(\U_{n_e}),&\text{if}\; n_2=0,\\
\big((\U_{n_e})^{n_o}\rtimes \Spe(\U_{n_e})\big) \times (\mathbb{Z}_2)^{n-n_2}, & \text{if}\; n_2\geq 1.      
\end{cases}$
\end{lemma}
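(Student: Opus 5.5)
We prove the decomposition $\p_2=(\p_2^o\rtimes \p_2^e)\times \p_3$ in four steps: generation, normality of $\p_2^o$ together with trivial intersection, placing $\p_3$ in the centre with trivial intersection, and finally reading off the isomorphism types.

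\textbf{Generation.} Every generator of $\p_2$ lies in $\langle \p_2^o,\p_2^e,\p_3\rangle$. The generators $\psi_{i,j}$ with $i\in I_\even$ lie in $\p_2^e$ if $j\in I_\even$ and in $\p_2^o$ if $j\in I_\odd$. For $i\in I_2$, the generators $\sigma_{i,j}$ are handled by cases. If $j\in I_2$, then $\sigma_{i,j}=\psi_{i,j}\in\p_2^e$. If $j\in I\setminus I_2$, then $\sigma_{i,j}=\psi_{i,j}\phi_{j,m_j-1}$, with $\phi_{j,m_j-1}\in\p_3$ by Lemma \ref{intersection of P and C}. Hence $\p_2=\langle \p_2^o,\p_2^e,\p_3\rangle$. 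In the case $n_2=0$ there are no $\sigma$'s and $\p_3=1$, so $\p_2=\langle\p_2^o,\p_2^e\rangle$.

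\textbf{Normality of $\p_2^o$ and trivial intersection.} We use the descriptions of Corollary \ref{characterization of E} and the analogous description of $\p_2^o$. An element of $\p_2^e$ fixes $s_0$ and each $s_j$ with $j\in I_\odd$, and conjugates each $s_i$ with $i\in I_\even$ by an element of $\Delta$. An element of $\p_2^o$ fixes $s_0$ and each $s_i$ with $i\in I_\even$, and sends $s_j$ to $d_js_jd_j^{-1}$ with $d_j\in\Delta$. For $\eta\in\p_2^e$ and $\psi\in\p_2^o$, a direct computation on generators shows that $\eta\psi\eta^{-1}$ fixes $s_0$ and every $s_i$ with $i\in I_\even$, and sends $s_j$ to $\eta(d_j)s_j\eta(d_j)^{-1}$. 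Since $\eta(\Delta)=\Delta$, this is the element $\prod_j f_j(\eta(d_j))\in\p_2^o$, so $\p_2^o$ is normalized by $\p_2^e$. Trivial intersection holds because an element of both groups fixes every $s_j$ with $j\in I_\odd$ (as it lies in $\p_2^e$), hence has all $d_j=1$ by injectivity of the $f_j$. This gives $\p_2^o\rtimes\p_2^e$.

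\textbf{The factor $\p_3$.} By Lemma \ref{center of C is same as A}, $\p_3\le \Z(\p_2)$ when $n_e\ge2$. When $n_e=1$ the group $\p_2$ is abelian, so $\p_3$ is central in either case. It remains to show $\p_3\cap(\p_2^o\rtimes\p_2^e)=1$, which is the delicate point. Take $\phi=\prod_{j\in J}\phi_{j,m_j-1}$ with $J\subseteq I\setminus I_2$. This map conjugates $s_j$ by $s_0$ for $j\in J$ and fixes all other generators. Suppose $\phi=\psi\eta$ with $\psi\in\p_2^o$ and $\eta\in\p_2^e$. Every element of $\p_2^o\p_2^e$ sends $s_k$ to a $\Delta$-conjugate of $s_k$. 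So for $j\in J$ we get $s_0s_js_0=ds_jd^{-1}$ with $d\in\Delta$, that is, $d^{-1}s_0\in\C_W(s_j)\cap\C_W(s_0)$. By Lemma \ref{structure of the centralizer for s0} this intersection lies in $\langle\Delta_j\rangle$, which contains no element with nontrivial $s_0$-component in $\C_W(s_0)=\langle s_0\rangle\times\Delta$. This is a contradiction, so $J=\emptyset$ and $\phi=\Id$. (One checks that $\langle\Delta_j\rangle\cap s_0\Delta=\emptyset$, using the direct product decomposition.)

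\textbf{Isomorphism types.} Assembling the three pieces gives $\p_2=(\p_2^o\rtimes\p_2^e)\times\p_3$. The earlier lemmas identify $\p_2^o\cong(\U_{n_e})^{n_o}$ and $\p_2^e\cong\Spe(\U_{n_e})$. For $\p_3$: the generators $\phi_{j,m_j-1}$ with $j\in I\setminus I_2$ are commuting involutions with disjoint supports, and they are independent by Lemma \ref{Structure of P}. Hence $\p_3\cong(\mathbb{Z}_2)^{n-n_2}$ when $n_2\ge1$, and $\p_3=1$ when $n_2=0$.
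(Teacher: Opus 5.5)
Your proof is correct. Its skeleton matches the paper's: generation by $\p_2^o$, $\p_2^e$, $\p_3$; $\p_2^e$ normalizes $\p_2^o$; trivial intersections; and centrality of $\p_3$ from Lemma \ref{center of C is same as A}.

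The generation step, however, is done differently. You check the generators directly, using $\sigma_{i,j}=\psi_{i,j}\phi_{j,m_j-1}$ for $j\in I\setminus I_2$. The paper instead starts from an arbitrary $\psi\in\p_2$ with $\psi(s_i)=x_is_ix_i^{-1}$, $x_i\in\C_W(s_0)$. It pushes the $x_i$ into $\Delta$ using elements of $\p_3$, removes the action on the odd leaves with $\p_2^o$, and identifies what remains as an element of $\p_2^e$ via the converse direction of Corollary \ref{characterization of E}.

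Your route is shorter. It needs neither that converse direction nor the correction maps $\phi_{i,m_i-1}$; as written, the paper applies these even for $i\in I_2$, where they are undefined but also unnecessary. The paper's route, though, only uses the shape of $\psi$, so it proves more: every automorphism fixing $s_0$ and conjugating each generator by an element of $\C_W(s_0)$ lies in $\p_2$. That is Corollary \ref{lemma for the C-automorphisms}, which the last step of Theorem \ref{Automorphism group of star graph} relies on. With your proof, that corollary would need a separate argument.

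Two further differences are in your favour. For normality you conjugate general elements rather than computing $\psi_{i,j}\psi_{k,l}\psi_{i,j}$. For $\p_3\cap(\p_2^o\p_2^e)=1$ you give an actual argument, via $d^{-1}s_0\in\C_W(s_j)\cap\C_W(s_0)$, where the paper only asserts it.

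One harmless slip: with $f_j(\Delta_i)=\psi_{i,j}$ one gets $f_j(d)(s_j)=d^{-1}s_jd$. So the conjugate is $\eta\psi\eta^{-1}=\prod_j f_j(\eta(d_j)^{-1})$.
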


\begin{proof}
Let $\psi\in \p_2$. Then, for each $i\in \bar{I}$, $\psi(s_i)=x_is_ix_i^{-1}$, where $x_i\in \C_W(s_0)$. Our goal is to bring these conjugating elements $x_i$ inside $\Delta$ modulo the action of $\p_3$.  If $n_2 = 0$, then $x_i$ already belongs to $\Delta$ for all $i$ and $\p_3 = 1$. 

Suppose instead that $n_2 \geq 1$. If $x_i \notin \Delta$ for some $i \in I$, then $x_i = x_i's_0 $, where $x_i' \in \Delta$. This gives us $\psi(s_i) = x_i' s_0 s_is_0 x_i'^{-1}$. Applying the involutive automorphism $\phi_{i, m_i-1}$, we obtain $$\phi_{i, m_i-1} \psi(s_i) = x_i' s_i x_i'^{-1}.$$ By applying these modifications only for those indices $i$ for which $x_i \notin \Delta$, we obtain an element $\gamma_1 \in \p_3$ (since each constituent $\phi_{i, m_i-1}$ belongs to $\p_3$) such that $$\gamma_1^{-1}\psi(s_i)=x_i's_ix_i'^{-1},$$ for all $i\in I$, where $x_i'\in \Delta$.

Note that a reduced expression of $x_i'$ does not involve the generators $s_j$ for any $j\in I_\odd$. Thus, by the definition of $\p_2^o$, we can find an element $\gamma_2\in \p_2^o$ such that 
 $$\gamma_2^{-1}\gamma_1^{-1}\psi(s_i)=\begin{cases}
 s_i, & \text{if $i\in I_\odd$},\\
        x_i's_ix_i'^{-1}, & \text{if $i\in I_\even$}.
\end{cases}$$
Note that the remaining partial conjugations are on the generators in $I_\even$. Therefore, by Corollary $\ref{characterization of E}$, $\gamma_2^{-1}\gamma_1^{-1}\psi=\gamma_3\in \p_2^e$, i.e., $\psi = \gamma_1\gamma_2\gamma_3\in \p_3\p_2^o\p_2^e$. This proves that $\p_2$ is generated by the subgroups $\p_2^o$, $\p_2^e$, and $\p_3$.

To establish the precise internal products, we analyze the mutual intersections and actions of these subgroups. By the respective definitions of $\p_2^o, \p_2^e$, and $\p_3$, it follows that $$\p_2^o \cap \p_2^e = 1 = \p_3\cap (\p_2^o\p_2^e).$$ Let $\psi_{i,j}$ be a generator of $\p_2^e$ and $\psi_{k,l}$ be a generator of $\p_2^o$, where $i,j,k\in I_\even$ (with $i\neq j$) and $l\in I_\odd$. It is easy to see that $$\psi_{i,j}\psi_{k,l}\psi_{i,j}=\begin{cases}
\psi_{k,l}, & \text{if $j\neq k$},\\
        \psi_{i,l}\psi_{k,l}\psi_{i,l}, & \text{if $j=k$}.
\end{cases}$$
It follows that $\p_2^e$ normalizes $\p_2^o$. Consequently, their product forms an internal semidirect product: $\p_2^o  \p_2^e = \p_2^o\rtimes \p_2^e$. Since $\p_3 \in Z(\p_2)$, we obtain the final decomposition $\p_2=\p_3\p_2^o\p_2^e=\p_3\times (\p_2^o\rtimes \p_2^e)$.
\end{proof}

As a consequence of Corollary $\ref{characterization of E}$ and Lemma $\ref{structure of C}$, we obtain the following result.

\begin{corollary}\label{lemma for the C-automorphisms}
An automorphism $\psi\in \p_2$ if and only if $\psi(s_i)=x_is_ix_i^{-1}$ for each $s_i \in S$, where $x_i\in \begin{cases}
        \C_W(s_0),&\text{if $n_2\geq 1$,}\\
        \Delta,&\text{if $n_2=0$}.
\end{cases}$
\end{corollary}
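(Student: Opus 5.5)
The corollary has two directions, and I would treat them separately. Throughout, I assume it is implicitly understood that $\psi$ is an automorphism. I also read the condition as including $s_0$, so that $\psi(s_0)=x_0s_0x_0^{-1}=s_0$ automatically, because $x_0\in \C_W(s_0)$.

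\emph{Necessity.} I would check that every generator of $\p_2$ conjugates each $s_k$ by an element of $\C_W(s_0)$, and by an element of $\Delta$ when the generator is some $\psi_{i,j}$. For $\psi_{i,j}$ the conjugator is $\Delta_i\in\Delta$. For $\sigma_{i,j}$ with $i\in I_2$ it is $s_i$, and $s_i\in\C_W(s_0)$ since $m_i=2$ (note $s_i=s_0\Delta_i$ when $m_i=2$).

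The set of automorphisms $\psi$ with $\psi(s_k)=x_ks_kx_k^{-1}$, $x_k\in H$, for $H=\C_W(s_0)$ or $H=\Delta$, is closed under composition. The reason is that every such $\psi$ leaves $H$ invariant, which follows from the first bullet observation at the end of Section 2: elements of $\p_2$ preserve $\Delta$, and they fix $s_0$. Granting this, if $\psi'(s_k)=y_ks_ky_k^{-1}$, then $\psi\psi'(s_k)=\psi(y_k)x_ks_kx_k^{-1}\psi(y_k)^{-1}$, and $\psi(y_k)x_k\in H$. Hence the whole group $\p_2$ has the stated form. When $n_2=0$ there are no $\sigma_{i,j}$, so only $\Delta$-conjugators occur.

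\emph{Sufficiency.} This direction reproduces the first half of the proof of Lemma~\ref{structure of C}, which used only the form of $\psi$ and never its membership in $\p_2$.

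First suppose $n_2\geq1$. For each $i$ with $x_i\notin\Delta$, write $x_i=x_i's_0$ with $x_i'\in\Delta$ and compose with $\phi_{i,m_i-1}\in\p_3$. For $i\in I_2$, $s_0$ commutes with $s_i$, so no correction is needed there. After these corrections all conjugators lie in $\Delta$.

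Next, the odd-indexed conjugators $x_j'\in\Delta$ are removed by an element of $\p_2^o=\prod_j\p_2^o(j)$, using the isomorphism $\p_2^o(j)\cong\Delta$ given by $\Delta_i\mapsto\psi_{i,j}$.

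What remains fixes $s_0$ and every $s_j$ with $j\in I_\odd$, and conjugates each $s_i$ with $i\in I_\even$ by an element of $\Delta$. By Corollary~\ref{characterization of E}, it therefore lies in $\p_2^e$. Hence $\psi\in\p_3\p_2^o\p_2^e=\p_2$.

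If $n_2=0$, the conjugators already lie in $\Delta$, so the first correction step is skipped and the same argument applies.

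The main obstacle is the closure step in the necessity direction, namely verifying that $\psi(\C_W(s_0))=\C_W(s_0)$ for every generator of $\p_2$. Since $\psi$ fixes $s_0$ and is an automorphism, it maps the centralizer of $s_0$ onto itself, which settles this. Invariance of $\Delta$ is the bullet observation. Everything else is bookkeeping already done in Lemma~\ref{structure of C}.
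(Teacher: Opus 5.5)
Your proposal is correct and follows the paper's route. The paper derives this corollary from the reduction in the proof of Lemma~\ref{structure of C}: correct by $\p_3$, then by $\p_2^o$, then invoke Corollary~\ref{characterization of E}. It takes the forward direction from the observation that the generators of $\p_2$ conjugate by elements of $\C_W(s_0)$ (of $\Delta$ when $n_2=0$), and your argument does exactly this; your explicit treatment of indices $i\in I_2$, where $\phi_{i,m_i-1}$ is undefined but $s_0$ commutes with $s_i$, covers a small case the paper passes over.
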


The following result is immediate by Lemma $\ref{intersection of P and C}$.

\begin{lemma}\label{structure of P and C}
    The group of partial conjugations $\p:=\langle \p_1,\p_2\rangle$ decomposes as the amalgamated direct product of $\p_1$ and $\p_2$ over their intersection $\p_3$, i.e., $\p=\p_1\times_{\p_3} \p_2$.
\end{lemma}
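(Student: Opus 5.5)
The statement asks for two facts. First, $\p_1$ and $\p_2$ commute elementwise. Second, their intersection is $\p_3$, which is already Lemma \ref{intersection of P and C}. Given both, $\p = \p_1\p_2$ is the image of $\p_1 \times \p_2$ under multiplication, with kernel the antidiagonal copy $\{(x,x^{-1}) \mid x \in \p_3\}$. That is exactly the amalgamated direct product $\p_1\times_{\p_3}\p_2$. So the real work is to show $[\p_1,\p_2]=1$, and I will check this on generators.

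I will take a generator $\phi_{k,t}$ of $\p_1$, with $k\in I\setminus I_2$, and an arbitrary $\psi\in\p_2$. By Corollary \ref{lemma for the C-automorphisms}, $\psi(s_i)=x_is_ix_i^{-1}$ with $x_i\in \C_W(s_0)$ for every $i\in\bar I$, and we may take $x_0=1$. By Lemma \ref{Structure of P}(ii), $\phi_{k,t}$ fixes $\C_W(s_0)$ pointwise, so it fixes every $x_i$. I then compare $\phi_{k,t}\psi$ and $\psi\phi_{k,t}$ on each generator.
\begin{itemize}
\item For $j\neq k$, both composites send $s_j$ to $x_js_jx_j^{-1}$, since $\phi_{k,t}$ fixes $s_j$ and $x_j$.
\item On $s_k$, we get $\phi_{k,t}\psi(s_k)=x_k\,s_k(s_0s_k)^{t-1}\,x_k^{-1}$. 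On the other side, $\psi\phi_{k,t}(s_k)=\psi\bigl(s_k(s_0s_k)^{t-1}\bigr)$, which is the same word with $s_k$ replaced by $x_ks_kx_k^{-1}$ and $s_0$ fixed.
\end{itemize}

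The main obstacle is showing these two expressions for $s_k$ agree. The point is that $x_k$ commutes with $s_0$, so
\[
\psi\bigl(s_k(s_0s_k)^{t-1}\bigr)=x_ks_kx_k^{-1}\bigl(s_0x_ks_kx_k^{-1}\bigr)^{t-1}=x_k\,s_k(s_0s_k)^{t-1}\,x_k^{-1}.
\]
Here I use $s_0x_k=x_ks_0$ to pull $x_k$ through each factor. Hence the two composites agree on all of $S$, so $\phi_{k,t}\psi=\psi\phi_{k,t}$. Since this holds for every generator of $\p_1$, $\p_1$ centralizes $\p_2$.

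Finally, with $[\p_1,\p_2]=1$, the set $\p_1\p_2$ is a subgroup, hence equals $\langle\p_1,\p_2\rangle=\p$. The multiplication map $\p_1\times\p_2\to\p$ is a surjective homomorphism with kernel $\{(x,x^{-1})\mid x\in\p_1\cap\p_2\}$. By Lemma \ref{intersection of P and C}, $\p_1\cap\p_2=\p_3$, which gives the stated decomposition in both cases $n_2\ge1$ and $n_2=0$. In the second case $\p_3=1$, so the product is an honest direct product.
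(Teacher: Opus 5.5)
Your proof is correct and follows the paper's route. The paper simply declares the decomposition immediate from Lemma~\ref{intersection of P and C}; you additionally verify that $\p_1$ centralizes $\p_2$, which the amalgamated product $\p_1\times_{\p_3}\p_2$ genuinely needs and which the paper leaves implicit (it performs the same computation only for $\phi_{i,m_i-1}$, in the proof of Lemma~\ref{center of C is same as A}), and your extension to arbitrary $\phi_{k,t}$ via Lemma~\ref{Structure of P}(ii) and $s_0x_k=x_ks_0$ is sound.
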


\begin{lemma}\label{structure of T P C}
The subgroup generated by transvections and partial conjugations is given by
$$\langle\T, \p \rangle = \p \rtimes\T.$$
\end{lemma}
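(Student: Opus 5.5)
To prove $\langle \T,\p\rangle=\p\rtimes\T$, I will check three things: that $\T$ normalizes $\p$, that $\T\cap\p=1$, and that $\langle \T,\p\rangle=\p\T$. The last follows from the first, since if $\T$ normalizes $\p$ then $\p\T$ is a subgroup containing both. So the work is in normalization and trivial intersection.

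\emph{Trivial intersection.} Every element of $\p=\langle\p_1,\p_2\rangle$ sends each $s_k$ to a conjugate of $s_k$. For $\p_1$ the conjugator lies in $W_k$; for $\p_2$ it lies in $\C_W(s_0)$, by Corollary \ref{lemma for the C-automorphisms}. A nontrivial $\tau=\prod_{i\in J}\tau_i\in\T$ with $J\neq\emptyset$ instead sends $s_i$ to $s_0s_i$ for $i\in J$. This is not conjugate to $s_i$: for $i\in I_2$ it is the other involution $s_0s_i$ of $W_i\cong\mathbb{Z}_2\times\mathbb{Z}_2$, which lies in a different conjugacy class. One way to see this is to map $W\to W_{\{s_0,s_i\}}$ by killing all other generators; conjugacy classes are preserved by this map and $s_i\neq s_0s_i$ in the abelian image. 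Hence $\T\cap\p=1$.

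\emph{Normalization.} Since $\T$ is generated by the involutions $\tau_i$, it suffices to show $\tau_i\theta\tau_i\in\p$ for each generator $\theta$ of $\p_1$ and $\p_2$. I will compute on the generators $S$ case by case.

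\begin{itemize}
\item For $\phi_{j,t}\in\p_1$ we have $j\notin I_2$, so $\tau_i$ and $\phi_{j,t}$ commute by Lemma \ref{T and P commute}.
\item For $\psi_{k,j}$ or $\sigma_{k,j}$ with $j\neq i$ and $k\neq i$, the conjugator $\Delta_k$ or $s_k$ is fixed by $\tau_i$, as is $s_j$. So $\tau_i$ commutes with that generator.
\item If $k=i\in I_2$, then $\Delta_i=s_0s_i$ and $\tau_i$ swaps $s_i$ with $s_0s_i$. Hence $\tau_i\psi_{i,j}\tau_i=\sigma_{i,j}$ and $\tau_i\sigma_{i,j}\tau_i=\psi_{i,j}$, using that $\tau_i$ fixes $s_j$.
\item If $j=i\in I_2$, then $\tau_i\theta\tau_i(s_i)=\tau_i\theta(s_0s_i)=s_0\,x s_ix^{-1}$, where $x\in\C_W(s_0)$ is the conjugator of $\theta$. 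This equals $x s_0s_ix^{-1}$, and applying $\tau_i$ gives $x' s_i x'^{-1}$ with $x'=\tau_i(x)$. Here $x'\in\C_W(s_0)$ because $\tau_i$ preserves $\C_W(s_0)$ (Lemma \ref{Structure of T}(ii)).
\end{itemize}

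In every case the conjugate $\tau_i\theta\tau_i$ sends each generator to its conjugate by an element of $\C_W(s_0)$. Note that $n_2\ge1$ here, since otherwise $\T=1$ and there is nothing to prove. So by Corollary \ref{lemma for the C-automorphisms}, $\tau_i\theta\tau_i\in\p_2\subseteq\p$.

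The main obstacle is this last step: it relies on Corollary \ref{lemma for the C-automorphisms} being a genuine characterization, in particular that the resulting map is an automorphism lying in $\p_2$. Here it is automatic, since $\tau_i\theta\tau_i$ is already an automorphism. I also need to confirm that the representation by elements of $\C_W(s_0)$ survives conjugation by $\tau_i$, including the action on $\Delta_l$ for $l\neq i$, which $\tau_i$ fixes. With normalization and trivial intersection established, $\langle\T,\p\rangle=\p\rtimes\T$.
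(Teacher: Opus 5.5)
Your overall strategy is the paper's own: $\T$ commutes with $\p_1$ by Lemma~\ref{T and P commute}, conjugation by $\tau_i$ swaps $\psi_{i,j}$ and $\sigma_{i,j}$ and fixes every other generator of $\p_2$, and $\T\cap\p=1$. The normalization part is correct. In your last case the conjugator $x\in\{\Delta_k,s_k\}$ with $k\neq i$ is fixed by $\tau_i$, so $x'=x$ and $\tau_i\theta\tau_i=\theta$ outright. This is the paper's formula $\tau_i\psi_{j,k}\tau_i=\psi_{j,k}$ for $j\neq i$, so you do not need Corollary~\ref{lemma for the C-automorphisms} or the worry you attach to it.

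The step that fails as written is your proof that $s_0s_i$ is not conjugate to $s_i$. Killing every generator other than $s_0,s_i$ does not give a homomorphism onto $W_{\{s_0,s_i\}}$ when $n_o\ge 1$. For $m_j$ odd, the relation $(s_0s_j)^{m_j}=1$ becomes $s_0^{m_j}=s_0=1$. The quotient is then just $\langle s_i\rangle\cong\mathbb{Z}_2$, where $s_0s_i$ and $s_i$ have the same image, so the argument proves nothing. This case must be covered, since the paper only assumes $n_e\ge 1$.

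One fix is the sign character $\varepsilon\colon W\to\{\pm1\}$ with $\varepsilon(s)=-1$ for all $s\in S$. It is well defined because every relator of the presentation has even length. Every conjugate of $s_i$ has sign $-1$, while $\varepsilon(s_0s_i)=1$. Alternatively, send $s_j\mapsto s_0$ for $j\in I_{\odd}$ and $s_j\mapsto 1$ for $j\in I_{\even}\setminus\{i\}$; this gives a genuine retraction $W\to W_i$ onto an abelian group.

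With this repair, your intersection argument is more complete than the paper's. The paper asserts $\p_2\cap\T=1$ and combines it with $\T\cap\p_1=1$, but that does not formally give $\T\cap\p_1\p_2=1$. Your invariant closes this: every element of $\p$ sends each $s_k$ to a conjugate of $s_k$, and this property is closed under composition, so it covers all of $\p$ at once.
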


\begin{proof}
 A direct computation shows that, for $1\leq i,j,k\leq n$ such that $i\in I_2$ and $j\in I_\even$ with $j\neq k$,$$n\tau_i\psi_{j,k}\tau_i=\begin{cases}
 \sigma_{j,k},& \text{if } j=i,\\
\psi_{j,k},&\text{if } j\neq i.
\end{cases}
$$ This proves that $\T$ normalizes $\p_2$. Also, $\p_2\cap \T=1$. Then, due to Lemma $\ref{T and P commute}$ and Lemma $\ref{structure of P and C}$, $\p\cap \T=1$ and, hence $$\langle \T,\p\rangle=\p \rtimes\T, \quad \text{when} \quad n_2\geq 1.$$ Next, if $n_2=0$, then $\T=1$ and the result is immediate. This completes the proof.
\end{proof}

\section{Structure of Aut(W)}\label{Structure of Aut(W) and Short Exact Sequences} 
In this section, we show that $\Aut(W)$ is generated by inner automorphisms, partial conjugations, transvections, and diagram automorphisms.

Since the conjugacy classes of the generators are not preserved under the action of $\T$, the following lemma is immediate.

\begin{lemma}
    $\Inn(W)\cap \T=1$.
\end{lemma}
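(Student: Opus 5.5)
We must show that no nontrivial element of $\T$ is inner. Take $\tau=\prod_{i\in J}\tau_i\in\T$ with $J\subseteq I_2$ nonempty, and suppose $\tau=\widehat{w}$ for some $w\in W$. Since $\tau$ is a product of commuting involutions, its value on each generator is explicit: $\tau(s_i)=s_0s_i$ for $i\in J$, and $\tau(s_k)=s_k$ for all other $k\in\bar I$. For $i \in J$ this gives $w s_i w^{-1}=s_0s_i$, so $s_i$ would be conjugate to $s_0s_i$. The whole proof is to rule this out.

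To do so I will use a conjugacy invariant, namely a homomorphism $\varepsilon_i\colon W\to\mathbb{Z}_2$ defined on generators by $\varepsilon_i(s_i)=1$ and $\varepsilon_i(s_k)=0$ for $k\neq i$. We must check that the defining relations of \eqref{presentation of star graph} are respected. The relations $s_k^2=1$ are clearly fine. For $j\neq i$, the relation $(s_0s_j)^{m_j}=1$ maps to $0$. For $j=i$, it maps to $m_i\cdot 1=2\cdot 1=0$, because $m_i=2$ when $i\in I_2$. So $\varepsilon_i$ is well defined.

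Because $\mathbb{Z}_2$ is abelian, $\varepsilon_i$ is constant on conjugacy classes. Now compare the two elements: $\varepsilon_i(s_i)=1$, while $\varepsilon_i(s_0s_i)=\varepsilon_i(s_0)+\varepsilon_i(s_i)=0+1=1$. These agree, so this invariant does not separate them, and I need a finer one.

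I therefore switch to the homomorphism $\varepsilon_0$, which sends $s_0\mapsto 1$ and every $s_k$ with $k\ne 0$ to $0$. This is well defined only if every label $m_j$ is even. In general I will use a homomorphism that sends $s_0\mapsto 1$, sends every $s_j$ with $j\in I_\odd$ to $1$, and sends every $s_j$ with $j\in I_\even$ to $0$. I must check the relations again. For $j\in I_\odd$ the relation $(s_0s_j)^{m_j}=1$ maps to $m_j(1+1)=0$. For $j\in I_\even$ it maps to $m_j\cdot 1=0$, since $m_j$ is even. So this map is a well-defined homomorphism. On our two elements it gives $s_i\mapsto 0$ and $s_0s_i\mapsto 1$, because $i\in I_2\subseteq I_\even$. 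Hence $s_i$ and $s_0s_i$ are not conjugate, and $\tau$ cannot be inner.

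Alternatively, one can argue from Lemma \ref{conjugacy lemma through odd graphs} together with the fact that reflections are conjugate to generators: $s_0s_i$ is a reflection in $W_i$ conjugate to $s_0$, not to $s_i$. I regard the homomorphism argument as the cleaner route. The only delicate point is checking that this homomorphism is compatible with the odd labels, which was done above.
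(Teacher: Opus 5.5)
Your proof is correct and takes essentially the paper's approach. The paper's one-line justification is that a nontrivial $\tau\in\T$ sends some $s_i$ to $s_0s_i$, which is not conjugate to $s_i$, so $\tau$ cannot be inner. Your parity homomorphism $W\to\mathbb{Z}_2$ (sending $s_0$ and the $s_j$ with $j\in I_\odd$ to $1$, and the $s_j$ with $j\in I_\even$ to $0$) supplies the check of that non-conjugacy, which the paper leaves implicit.

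You should discard your closing ``alternative'' paragraph, because its premise is false. $s_0s_i$ is the longest element of $W_i\cong\mathbb{Z}_2\times\mathbb{Z}_2$, not a reflection. It is also not conjugate to $s_0$: your own $\varepsilon_i$ gives $\varepsilon_i(s_0)=0\neq 1=\varepsilon_i(s_0s_i)$. More generally, under the pair of your two homomorphisms $s_0s_i\mapsto(1,1)$, and no generator attains this value, so $s_0s_i$ is not conjugate to any element of $S$.
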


\begin{lemma}\label{inner intersection P}
The intersection of inner automorphisms and partial conjugations is given by $\Inn(W)\cap\p =\big\langle \widehat{w}~\big|~w\in \C_W(s_0)\big\rangle$. In particular, $\p_1$ and $\p_2$ intersect $\Inn(W)$ as follows:
\begin{itemize}
    \item $\Inn(W)\cap \p_1= \langle\widehat{s_0}\rangle$.
    \item $\Inn(W)\cap \p_2=\begin{cases}
     \big\langle \widehat{w}~\big|~w\in \C_W(s_0)\big\rangle,&\text{if $n_2\geq 1$,}\\
        \big\langle \widehat{\Delta_j}\mid j\in I_\even\big\rangle,&\text{if $n_2=0$.}
    \end{cases}$
\end{itemize}  
\end{lemma}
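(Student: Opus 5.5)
The plan is to prove both inclusions for $\Inn(W)\cap\p$ and then specialize to $\p_1$ and $\p_2$.

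\emph{Elements of the intersection are conjugations by $\C_W(s_0)$.} Every element of $\p=\langle\p_1,\p_2\rangle$ fixes $s_0$: the generators $\phi_{i,t}$, $\psi_{i,j}$, $\sigma_{i,j}$ all fix $s_0$. So if $\widehat{w}\in\p$, then $ws_0w^{-1}=s_0$, that is, $w\in\C_W(s_0)$. This gives $\Inn(W)\cap\p\le\langle\widehat w\mid w\in\C_W(s_0)\rangle$. By Lemma \ref{W has trivial center} the map $w\mapsto\widehat w$ is injective, so this subgroup is a copy of $\C_W(s_0)$.

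\emph{Conjugations by $\C_W(s_0)$ lie in $\p$.} It suffices to check the generators $s_0$ and $\Delta_i$ ($i\in I_\even$) of $\C_W(s_0)$ given by Lemma \ref{structure of the centralizer for s0}.
\begin{itemize}
\item For $\widehat{\Delta_i}$: it fixes $s_0$ and $s_i$, since $\Delta_i\in\Z(W_i)$. It sends every other $s_j$ to $\Delta_js_j\Delta_j$, which equals $\psi_{i,j}(s_j)$. Hence $\widehat{\Delta_i}=\prod_{j\ne i}\psi_{i,j}\in\p_{2,i}$; the factors commute because their supports are disjoint.
\item For $\widehat{s_0}$: it fixes $s_0$ and every $s_j$ with $j\in I_2$. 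For $j\in I\setminus I_2$ it sends $s_j$ to $s_0s_js_0=\phi_{j,m_j-1}(s_j)$, as computed in the proof of Lemma \ref{intersection of P and C}. Hence $\widehat{s_0}=\prod_{j\in I\setminus I_2}\phi_{j,m_j-1}\in\p_1$.
\end{itemize}
Together the two steps give the main equality.

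\emph{The refinements.}
\begin{itemize}
\item For $\p_1$: each element of $\p_1$ restricts to $W_j$ as an automorphism of $W_j$ and fixes $s_0$. If $\widehat w\in\p_1$, then $w\in\C_W(s_0)$, and $\widehat w$ must preserve each $W_j$.
\begin{itemize}
\item If $n_e\ge1$, write $w=s_0^\epsilon d$ with $d\in\Delta$. Then $\widehat w(s_j)=s_0^\epsilon ds_jd^{-1}s_0^\epsilon$ must lie in $W_j$ for every $j$.
\item Take $j\in I_\odd$ (or $j\in I_\even$ with $d$ not ending in $\Delta_j$). The normal-form structure of $\Delta\cong\U_{n_e}$, together with $\C_W(s_0)\cap W_j$ from the list after Lemma \ref{structure of the centralizer for s0}, should force $d\in\bigcap_j W_j=\langle s_0\rangle\cap\Delta=1$.
\end{itemize}
A cleaner route: $\widehat w(W_j)=W_j$ means $w$ normalizes $W_j$. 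The normalizer of this finite parabolic subgroup (with no adjacent generators other than $s_0$) is $W_j\cdot\C_W(W_j)$-type. Intersecting these normalizers over all $j$, and using $\bigcap_jW_j=\langle s_0\rangle$, gives $w\in\langle s_0\rangle$.
\item For $\p_2$: use Corollary \ref{lemma for the C-automorphisms}. If $n_2\ge1$, every $\widehat w$ with $w\in\C_W(s_0)$ is of the required form, so the whole group lies in $\p_2$. If $n_2=0$, elements of $\p_2$ conjugate each generator by elements of $\Delta$. Conversely, $\widehat w$ with $w=s_0d$ acts on $s_j$ for $j\in I_\odd$ (nonempty, or else on some $j\in I_\even\setminus I_2$) by conjugation by $s_0d$. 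This would have to agree with a conjugation by $\Delta$ modulo $\C_W(s_j)$. Since $s_0\notin\Delta\C_W(s_j)$, we get $\epsilon=0$, leaving $\langle\widehat{\Delta_j}\rangle$.
\end{itemize}

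The main obstacle is the $\p_1$ refinement, namely excluding a nontrivial $\Delta$-part of $w$. I would handle it by taking $j$ equal to the first letter index of the reduced word for $d$ in $\Delta$. I would then show that $ds_jd^{-1}\in s_0^\epsilon W_js_0^\epsilon=W_j$ contradicts Lemma \ref{maximal parabolic subgroups are not conjugate}(ii)-style parabolic uniqueness, or directly the centralizer computation $\C_W(s_0)\cap W_j$.
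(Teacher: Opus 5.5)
Your skeleton is the same as the paper's. The upper bound comes from every generator of $\p$ fixing $s_0$. The lower bound comes from $\widehat{s_0}=\prod_{j\in I\setminus I_2}\phi_{j,m_j-1}$ and $\widehat{\Delta_i}=\prod_{k\neq i}\psi_{i,k}$; note the typo: $\widehat{\Delta_i}(s_j)=\Delta_i s_j\Delta_i$, not $\Delta_j s_j\Delta_j$. The $\p_2$ refinement goes through Corollary~\ref{lemma for the C-automorphisms}. These parts are fine. Your coset argument for $n_2=0$ is more explicit than the paper's; just record that $s_0d\in\Delta\C_W(s_j)$ reduces to $s_0\in\Delta\C_W(s_j)$ because $s_0$ commutes with $d\in\Delta$.

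\textbf{The gap: the $\p_1$ refinement.} You flag this step but do not close it. What must be shown is: if $w\in\C_W(s_0)$ and $ws_jw^{-1}\in W_j$ for all $j\in I$, then $w\in\langle s_0\rangle$.

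Your normal-form sketch ends with ``$d\in\bigcap_jW_j$'', but the hypothesis only gives $dW_jd^{-1}=W_j$. Passing from this to $d\in W_j$ is exactly the assertion $N_W(W_j)=W_j$, which you have not established.

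The concrete plan also fails as stated. Choosing $j$ to be the first letter of $d$ gives no contradiction for $d=\Delta_j$, since then $ds_jd^{-1}=s_j\in W_j$. You would need $j$ different from the last letter, and then an actual proof that $ds_jd^{-1}\notin W_j$. Lemma~\ref{maximal parabolic subgroups are not conjugate}(ii) cannot supply this, because $dW_jd^{-1}=W_j$ involves no two distinct $W_i$'s.

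Your ``cleaner route'' is right in substance: here $N_W(W_j)=W_j$ does hold, by the Brink--Howlett description of normalizers of finite standard parabolic subgroups, since $m(s_j,s_k)=\infty$ for every $k\neq 0,j$. But that is an external theorem you would have to cite, and ``$W_j\cdot\C_W(W_j)$-type'' is not a correct general description of such normalizers.

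\textbf{A repair inside the paper.} Use Lemma~\ref{Structure of P}(ii): $\p_1$ fixes $\C_W(s_0)$ pointwise. So $\widehat w\in\p_1$ with $w\in\C_W(s_0)$ forces $w\in\Z(\C_W(s_0))=\langle s_0\rangle\times\Z(\Delta)$. This equals $\langle s_0\rangle$ when $n_e\geq 2$, since $\Delta\cong\U_{n_e}$ is then centerless.

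If $n_e=1$, say $I_\even=\{i\}$, it only remains to exclude $\widehat{\Delta_i}$. That is, you must check $\Delta_is_k\Delta_i\notin W_k$ for some $k\neq i$. This holds because the root $\Delta_i(\alpha_k)$ has nonzero $\alpha_i$-coefficient, while reflections lying in $W_k$ have roots in the span of $\alpha_0,\alpha_k$.

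For what it is worth, the paper itself settles this step by a bare appeal to Corollary~\ref{automorphisms that preserve the maximal standard parabolic subgroups}, which leaves the same fact implicit. Your write-up still needs one of the arguments above to be complete.
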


\begin{proof}
Let $\psi\in \Inn(W)\cap \p $. Then $\psi=\widehat{w}$ for some $w\in W$. Moreover, since $\psi$ preserves $s_0$, it follows that $w\in \C_W(s_0)$. Hence $\Inn(W)\cap \p\leq \big\langle \widehat{w}~\big|~w\in \C_W(s_0)\big\rangle$. Conversely, a straightforward check shows that 
\begin{equation}\label{Inn intersection P}
    \widehat{s_0}= \prod_{j\in I\setminus I_2} \phi_{j,{m_j-1}},
\end{equation}  
and, if there exists some $ j\in I_\even$, then \begin{equation}\label{Inn intersection C}
\widehat{\Delta_{j}}=\prod_{k\neq j}\psi_{j,k}.\end{equation}
Thus, by Lemma $\ref{structure of the centralizer for s0}$, we have $\big\langle \widehat{w}~\big|~w\in \C_W(s_0)\big\rangle\leq \Inn(W)\cap \p $. This shows the equality $\Inn(W)\cap\p =\big\langle \widehat{w}~\big|~w\in \C_W(s_0)\big\rangle$.

By Corollary $\ref{automorphisms that preserve the maximal standard parabolic subgroups}$, we have $\Inn(W)\cap \p_1\leq \langle \widehat{s_0}\rangle$ and by $\eqref{Inn intersection P}$ the equality follows.

When $n_2=0$, by Corollary $\ref{lemma for the C-automorphisms}$, $\Inn(W)\cap \p_2\leq \langle \widehat{\Delta_j}\mid j\in I_\even\rangle$. Further, the equality holds due to $\eqref{Inn intersection C}$. When $n_2\geq 1$, together with Corollary $\ref{lemma for the C-automorphisms}$, and the relations $\eqref{Inn intersection P}$, and $\eqref{Inn intersection C}$, we get $\Inn(W)\cap \p_2=\langle \widehat w \mid w\in \C_W(s_0)\rangle$.
\end{proof}

\begin{lemma}\label{Diagrams normalize the rest of the automorphisms}
The group of diagram automorphisms $\diag(W)$ normalize each of the subgroups $\T,\p_1$, and $\p_2$ of $\Aut(W)$.
\end{lemma}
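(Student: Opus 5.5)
The plan is to compute the conjugate of each generator of $\T$, $\p_1$ and $\p_2$ by an arbitrary diagram automorphism $\rho\in\diag(W)$. We will show that the result is again a generator of the same type. Recall that $\rho$ fixes $s_0$ and permutes the leaves $s_1,\dots,s_n$ via a permutation $\pi$ of $I$ with $m_{\pi(i)}=m_i$ for all $i$. In particular, $\pi$ preserves $I_2$, $I_\even$ and $I_\odd$. Since $\rho(s_0)=s_0$ and $\rho(s_i)=s_{\pi(i)}$, we get directly $\rho(\Delta_i)=\Delta_{\pi(i)}$ for every $i\in I_\even$, using $m_{\pi(i)}=m_i$ and the explicit formula in Lemma \ref{maximal parabolic subgroups are not conjugate}(iii).

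The key step is to evaluate $\rho\,\alpha\,\rho^{-1}$ on the generators $s_k$, for each generator $\alpha$. We use $\rho^{-1}(s_k)=s_{\pi^{-1}(k)}$, and $\rho^{-1}(s_0)=s_0$.

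For a transvection $\tau_i$ with $i\in I_2$, the conjugate $\rho\tau_i\rho^{-1}$ sends $s_{\pi(i)}$ to $\rho(s_0s_i)=s_0s_{\pi(i)}$ and fixes every other generator. Hence $\rho\tau_i\rho^{-1}=\tau_{\pi(i)}$, with $\pi(i)\in I_2$.

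For a type 1 partial conjugation $\phi_{i,t}$, the conjugate sends $s_{\pi(i)}$ to $s_{\pi(i)}(s_0s_{\pi(i)})^{t-1}$ and fixes the rest. Since $m_{\pi(i)}=m_i$, the conditions $1\le t<m_{\pi(i)}$ and $\gcd(t,m_{\pi(i)})=1$ still hold, so $\rho\phi_{i,t}\rho^{-1}=\phi_{\pi(i),t}$.

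For the type 2 generators the same computation gives $\rho\psi_{i,j}\rho^{-1}=\psi_{\pi(i),\pi(j)}$, because $\rho(\Delta_i s_j\Delta_i)=\Delta_{\pi(i)}s_{\pi(j)}\Delta_{\pi(i)}$. Likewise $\rho\sigma_{i,j}\rho^{-1}=\sigma_{\pi(i),\pi(j)}$. Here $\pi(i)\in I_\even$ (respectively $I_2$) and $\pi(j)\neq\pi(i)$, so these are again generators of $\p_2$.

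Since the generating set of each of $\T$, $\p_1$, $\p_2$ is mapped into itself by conjugation by $\rho$, each subgroup satisfies $\rho H\rho^{-1}\subseteq H$. Applying the same argument to $\rho^{-1}$ gives equality, so $\diag(W)$ normalizes $\T$, $\p_1$ and $\p_2$. The argument in fact shows that $\diag(W)$ permutes the local pieces $\p_{1,i}$ and $\p_{2,i}$ according to $\pi$.

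There is no real obstacle here. The only point needing care is the identity $\rho(\Delta_i)=\Delta_{\pi(i)}$, which rests on the label-preservation condition $m_{\pi(i)}=m_i$. That same condition is what keeps each index in its class ($I_2$, $I_\even\setminus I_2$, or $I_\odd$), so the conjugated generators have admissible parameters.
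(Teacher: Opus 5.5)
Your proof is correct and follows the same route as the paper: conjugate each generator $\tau_i$, $\phi_{i,t}$, $\psi_{i,j}$, $\sigma_{i,j}$ by a diagram automorphism, check the result on the generators of $W$, and identify it as the generator of the same type with permuted indices. The only differences are cosmetic. The paper conjugates as $\rho^{-1}\alpha\rho$ and obtains the index $\rho^{-1}(i)$, whereas you use $\rho\alpha\rho^{-1}$ and obtain $\pi(i)$; you also state explicitly that $\rho(\Delta_i)=\Delta_{\pi(i)}$ follows from $m_{\pi(i)}=m_i$, which the paper uses without comment.
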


\begin{proof}
 Let ${\rho}\in \diag(W)$. For some $i\in I_2$, evaluating ${ \rho}^{-1} \tau_i{\rho}$ on the generators of $W$, we obtain
\begin{equation}\label{diagrams normalize trasvections}
         { \rho}^{-1} \tau_i{\rho}(s_j)={ \rho}^{-1} \tau_i(s_{\rho(j)})=\begin{cases}
             s_i,&\text{if $\rho(j)\neq i$,}\\
             s_js_0,&\text{if $\rho(j)=i$.}
         \end{cases}
\end{equation} Thus, $ { \rho}^{-1} \tau_i{\rho}=\tau_{\rho^{-1}(i)}\in \T$. Let $\phi_{i,{t_i}}\in \p_{1,i}\leq \p_1$ where $1\leq k_i<m_i$ with $\gcd(k_i,m_i)=1$. Then \begin{equation}\label{graph maps normalizes phi maps}
         {\rho}^{-1}\phi_{i,{t_i}}{\rho}(s_j)={\rho}^{-1}\phi_{i,{t_i}}(s_{\rho(j)})=\begin{cases}
             s_j,&\text{if $\rho(j)\neq i$,}\\
             s_j(s_0s_j)^{{k}_i-1},&\text{if $\rho(j)=i$.}
         \end{cases}
\end{equation}
Thus, ${\rho}^{-1}\phi_{i^{t_i}}{\rho}=\phi_{{\rho^{-1}(i)},{t_i}}\in \p_{1,\rho^{-1}(i)}\leq \p_1$. Next, suppose $\psi_{i,j}\in \p_{2,i}\leq\p_2$ for some $1\leq j\leq n$ with $j\neq i$. Then \begin{equation}\label{graph maps normalizes psi maps}
         {\rho}^{-1}\psi_{i,j}{\rho}(s_k)={\rho}^{-1}\psi_{i,j}(s_{\rho(k)})=\begin{cases}
            s_k ,&\text{if $\rho(k)\neq j$,}\\
            \Delta_{\rho^{-1}(i)}s_k \Delta_{\rho^{-1}(i)},&\text{if $\rho(k)=j$.}
         \end{cases}
\end{equation}
Thus, ${\rho}^{-1}\psi_{i,j}{\rho}=\psi_{\rho^{-1}(i),\rho^{-1}(j)}\in \p_{2,\rho^{-1}(i)}\leq \p_2$. Similarly, when $i\in I_2$, for $\sigma_{i,j}\in\p_{2,i}\leq \p_2$, where $j\in I\setminus I_2$, \begin{equation}\label{graph maps normalizes sigma maps}
{\rho}^{-1}\sigma_{i,j}{\rho}=\sigma_{\rho^{-1}(i),\rho^{-1}(j)}\leq \p_{2, \rho^{-1}(i)}\leq \p_2.
\end{equation} This completes the proof.
\end{proof}

We are now ready to delve into the automorphism group of $W$.

\begin{theorem}\label{Automorphism group of star graph}
 The automorphism group $\Aut(W)$ of $W$ is generated by inner automorphisms, partial conjugations of type $1$ and type $2$, transvections, and diagram automorphisms. In particular, $\Aut(W)$ decompose as $$\Aut(W) = (( \Inn(W) \p) \rtimes \T)\rtimes \diag(W).$$
\end{theorem}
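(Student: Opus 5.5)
Take an arbitrary $\varphi\in\Aut(W)$ and compose it successively with diagram, inner, transvection and partial-conjugation automorphisms until it becomes the identity; then establish the semidirect product structure from normality and trivial-intersection statements.

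\textbf{Generation.} By Lemma~\ref{maximal parabolic subgroups goes to its conjugate}, $\varphi$ permutes the conjugacy classes of the $W_i$ according to a label-preserving permutation of $I$. Composing with the inverse diagram automorphism $\rho$, we may assume $\varphi(W_i)=g_iW_ig_i^{-1}$ for some $g_i\in W$ and every $i\in I$. By Lemma~\ref{s0 maps to s0 under automorphism of W}, after an inner automorphism we may further assume $\varphi(s_0)=s_0$. Then $s_0\in g_iW_ig_i^{-1}$, so $g_i^{-1}s_0g_i\in W_i$ is a reflection of $W_i$ conjugate in $W$ to $s_0$.

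The key step is to show that $g_i$ can be replaced by an element of $\C_W(s_0)$, with the correction absorbed into $W_i$; that is, $g_i\in \C_W(s_0)W_i$. I would argue this by cases. If $i\in I_\even$, the reflections of $W_i$ conjugate to $s_0$ are the $W_i$-conjugates of $s_0$ (this uses Lemma~\ref{conjugacy lemma through odd graphs}, since $s_i\not\sim s_0$). If $i\in I_\odd$, all reflections of $W_i$ are $W_i$-conjugate to $s_0$. In either case, adjust $g_i$ on the right by an element $u\in W_i$ so that $(g_iu)^{-1}s_0(g_iu)=s_0$. Now write $g_i=x_iy_i$ with $x_i\in\C_W(s_0)$ and $y_i\in W_i$, so that $\varphi(W_i)=x_iW_ix_i^{-1}$.

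By Corollary~\ref{lemma for the C-automorphisms} (with $x_i\in\Delta$, or $\C_W(s_0)$ when $n_2\ge1$), there is $\psi\in\p_2$ with $\psi(s_i)=x_is_ix_i^{-1}$, and hence $\psi(W_i)=x_iW_ix_i^{-1}$. When $n_2=0$ we need $x_i\in\Delta$. If instead $x_i=s_0x_i'$, then $x_iW_ix_i^{-1}=x_i'W_ix_i'^{-1}$ because $s_0\in W_i$, so we may drop the $s_0$. Then $\psi^{-1}\varphi$ fixes $s_0$ and stabilizes every $W_i$, so by Corollary~\ref{automorphisms that preserve the maximal standard parabolic subgroups} it lies in $\T\times\p_1$. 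Altogether this gives $\Aut(W)=\Inn(W)\,\p\,\T\,\diag(W)$.

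\textbf{Structure.} First, $\Inn(W)$ is normal in $\Aut(W)$, and Lemma~\ref{structure of T P C} shows $\T$ normalizes $\p$; hence $\Inn(W)\p$ is a subgroup normalized by $\T$. Second, $\diag(W)$ normalizes $\Inn(W)$, $\T$, $\p_1$ and $\p_2$ by Lemma~\ref{Diagrams normalize the rest of the automorphisms}. This leaves two intersections to check.
\begin{itemize}
\item[(a)] $(\Inn(W)\p)\cap\T=1$. An element $\widehat w\,p=\tau$ preserves conjugacy classes of generators on the left-hand side. A nontrivial $\tau$ sends some $s_i$ to $s_0s_i$, which is not conjugate to $s_i$ (by parity of the abelianization). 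Hence $\tau=1$.
\item[(b)] $(\Inn(W)\p\T)\cap\diag(W)=1$. Every element of $\Inn(W)\p\T$ maps each $W_i$ to a conjugate of $W_i$ itself, whereas a nontrivial $\rho$ sends $W_i$ to $W_{\rho(i)}$ with $\rho(i)\neq i$. By Lemma~\ref{maximal parabolic subgroups are not conjugate}(ii) these are not conjugate.
\end{itemize}

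\textbf{Main obstacle.} The hard part is the key step in the generation argument: showing that the conjugator $g_i$ can be taken in $\C_W(s_0)$ modulo $W_i$. This requires identifying the reflections of $W_i$ that are conjugate to $s_0$ and controlling the resulting double cosets. I would try the argument via the $W_i$-conjugacy classes of reflections described above first, and fall back on the centralizer description in Lemma~\ref{structure of the centralizer for s0} if needed.
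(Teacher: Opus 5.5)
\textbf{Gap: the automorphism $\psi$ is asserted but not established.} Most of your outline matches the paper. You reduce by $\diag(W)$ and $\Inn(W)$ to $\varphi(s_0)=s_0$. You move each conjugator into $\C_W(s_0)W_i$; your case analysis via Lemma~\ref{conjugacy lemma through odd graphs} is correct and spells out what the paper asserts in one line. Your intersection arguments (a) and (b) are also sound. The problem is the sentence ``By Corollary~\ref{lemma for the C-automorphisms} \dots\ there is $\psi\in\p_2$ with $\psi(s_i)=x_is_ix_i^{-1}$.'' That corollary only says which \emph{automorphisms} lie in $\p_2$. It does not say that an arbitrary tuple $(x_i)$ with $x_i\in\C_W(s_0)$ (or $\Delta$) defines an automorphism, and in general it does not.

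For example, take $n=2$, $m_1=m_2=4$, $x_1=\Delta_2$, $x_2=\Delta_1$. Then $s_0\mapsto s_0$, $s_i\mapsto x_is_ix_i^{-1}$ is a well-defined endomorphism, and it sends $\Delta_1\mapsto\Delta_2\Delta_1\Delta_2$ and $\Delta_2\mapsto\Delta_1\Delta_2\Delta_1$. An automorphism fixing $s_0$ must map $\C_W(s_0)=\langle s_0\rangle\times\Delta$ onto itself, hence map $\Delta$ onto $\Delta$. But these two reflections generate a subgroup of index $3$ in $\Delta\cong\U_2$, since their product is $(\Delta_2\Delta_1)^3$. You then form $\psi^{-1}\varphi$, so invertibility of $\psi$ is exactly what you need and have not shown.

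\textbf{How to close it.} In your situation the claim is true, and the repair is short. Since each $x_i$ centralizes $s_0$, $\psi|_{W_i}$ is conjugation by $x_i$. Hence $\psi(W_i)=x_iW_ix_i^{-1}=\varphi(W_i)$ for every $i\in I$, and since these subgroups generate $W$, $\psi$ is surjective. A finitely generated linear group is Hopfian, so $\psi\in\Aut(W)$, and only at this point does Corollary~\ref{lemma for the C-automorphisms} put it in $\p_2$.

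Alternatively, follow the paper's order. Set $y_i:=x_i^{-1}\varphi(s_i)x_i\in W_i$, which satisfies $\langle s_0,y_i\rangle=W_i$. Let $\widetilde{\varphi}\in\T\times\p_1$ be the automorphism $s_0\mapsto s_0$, $s_i\mapsto y_i$. Then $\varphi=\psi\circ\widetilde{\varphi}$, so $\psi=\varphi\widetilde{\varphi}^{-1}$ is an automorphism. The paper peels off $\T\times\p_1$ first precisely so that the corollary is only ever applied to a genuine automorphism.
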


\begin{proof}
Let $\varphi\in \Aut(W)$. Due to Lemma $\ref{s0 maps to s0 under automorphism of W}$ and Lemma $\ref{maximal parabolic subgroups goes to its conjugate}$, we may assume, up to the action of $\Inn(W)$ and $\diag(W)$, that $\varphi$ satisfies the following conditions:
\begin{itemize}
\item[(i)] $\varphi(s_0) = s_0$.
\item[(ii)] For each $i \in I$, there exists $x_i \in W$ such that $\varphi(W_i) = W_i^{x_i}$.
\end{itemize}

Since $s_0 \in \varphi(W_i) = W_i^{x_i}$, there exists some $w_i \in W_i$ such that $s_0 = x_i w_i s_0 w_i^{-1} x_i^{-1}$. This implies that the product $x_i w_i$ lies in the centralizer $\C_W(s_0)$. By replacing each $x_i$ with the representative $\overline{x_i} = x_i w_i$, we can assume without loss of generality that $x_i \in \C_W(s_0)$ for all $i \in I$, noting that $W_i^{x_i} = W_i^{\overline{x_i}}$ remains invariant under this substitution. Consequently, for each $i \in I$, the image of the generator $s_i$ under $\varphi$ can be expressed as $\varphi(s_i) = x_i y_i x_i^{-1}$, where $y_i \in W_i$ is a reflection such that $\langle s_0, y_i \rangle = W_i$. Consider the map $\widetilde{\varphi}: S \to W$ defined by: $$ \widetilde{\varphi}(s_0) = s_0 \quad \text{and} \quad \widetilde{\varphi}(s_i) = y_i \text{ for each } i \in I.$$ This map naturally extends to an automorphism of $W$. Note that $\widetilde{\varphi}\in \T\times\p_1$ (due to Corollary \ref{automorphisms that preserve the maximal standard parabolic subgroups}) and leaves the centraliser $\C_{W}(s_0)$ of $s_0$ invariant (by Lemma \ref{Structure of P}, $\widetilde{\varphi}\in \T\times\p_1$). By replacing $\varphi$ with the composition $\widetilde{\varphi}^{-1}\varphi$, we may henceforth assume that $$\varphi(s_i) = z_is_iz_i^{-1}\; \text{for some}\; z_i \in \C_W(s_0) = \langle s_0 \rangle \times \Delta.$$ Next, we shift each $z_i$ into $\Delta$. Let $i$ be an index such that $z_i \notin \Delta$. Then $z_i = s_0 \overline{z_i}$ for some $\overline{z_i} \in \Delta$. Consider the partial conjugation $\phi_{i,{m_i-1}}$. By definition $\phi_{i,{m_i-1}}  \varphi (s_i) = \overline{z_i}s_i\overline{z_i}^{-1},\; \text{and}\; \phi_{i,{m_i-1}}  \varphi (s_j) = \varphi (s_j),$ for all other generators $s_j$. Repeating this substitution for each such $i$ (for which $z_i \notin \Delta$), we ultimately obtain: $$\varphi(s_i) = z_is_iz_i^{-1}\; \text{for some}\; z_i \in  \Delta.$$ Finally, due to Corollary $\ref{lemma for the C-automorphisms}$, we have $\varphi \in \p_2$, proving that $\Aut(W)$ is generated by $\Inn(W)$, $\p_1$, $\p_2$, $\T$, and $\diag(W)$.

To establish the structural decomposition of $\Aut (W)$, we first note that by Lemma \ref{structure of P and C}, the subgroup generated by $\Inn(W)$ and $\p$ is given by the product $\Inn(W) \p$. By Lemma $\ref{inner intersection P}$, $\T$ normalizes this product and intersects it trivially. Therefore, we obtain the semi-direct product structure $N = \langle \Inn(W) \p, ~ \T \rangle = ( \Inn(W)\p ) \rtimes\T$. Furthermore, $N$ is a normal subgroup of $\Aut (W)$ as it is normalized by $\diag(W)$ by the Lemma $\ref{Diagrams normalize the rest of the automorphisms}$. Finally, we observe that the intersection $N \cap \diag(W)$ is trivial. Indeed, elements of $\diag(W)$ act by permuting the set of maximal standard parabolic subgroups $\{W_i\}_{i \in I}$, whereas every element of $N$ maps each $W_i$ to a conjugate subgroup $W_i^{g_i}$ for some $g_i \in W$. Since no two distinct maximal standard parabolic subgroups are conjugate (Lemma \ref{maximal parabolic subgroups are not conjugate}), an automorphism that both permutes these subgroups and maps them to conjugates must necessarily fix each $W_i$. The only diagram automorphism satisfying this condition is the identity. Thus, $N \cap \diag(W) = \{1\}$, establishing the desired semi-direct product $\Aut(W) = ((\Inn(W) \p) \rtimes \T)\rtimes \diag(W)$.
\end{proof}

As a corollary, we determine the special automorphisms group $\Spe(W)$ of $W$.

\begin{corollary}
The special automorphism group $\Spe(W)$ of $W$ decomposes as $$\Spe(W) = (\Inn(W)\p)\rtimes \diag_o.$$ Consequently, the full automorphism group decomposes as $$\Aut(W)=\Spe(W)\rtimes (\T\rtimes \diag_e).$$ In particular, $\Aut(W) = \Spe(W)$ if and only if $n_2=0$ and all the even labels have multiplicity one.
\end{corollary}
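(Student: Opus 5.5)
The plan is to identify $\Spe(W)$ inside the decomposition $\Aut(W) = ((\Inn(W)\p)\rtimes\T)\rtimes\diag(W)$ of Theorem \ref{Automorphism group of star graph}. Here $\Spe(W)$ is the group of automorphisms preserving the conjugacy class of every involution; as is standard, it suffices to check conjugacy classes of reflections, i.e.\ that each $s\in S$ is mapped to a conjugate of itself.

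\emph{Step 1: the generators lying in $\Spe(W)$.} Inner automorphisms are special by definition. Every element of $\p_1$ and $\p_2$ sends each $s_i$ to a conjugate of $s_i$: for $\p_1$ the conjugation takes place inside $W_i$, and for $\p_2$ it takes place by elements of $\C_W(s_0)$ (Corollary \ref{lemma for the C-automorphisms}). Now take $\rho\in\diag_o$. It permutes only leaves $s_i$ with $i\in I_\odd$, and all of these are conjugate to $s_0$ by Lemma \ref{conjugacy lemma through odd graphs}. Hence $\rho(s_i)$ is conjugate to $s_i$ for every $i$, and $\diag_o\le\Spe(W)$.

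\emph{Step 2: the reverse inclusion.} Write $\varphi\in\Spe(W)$ in the decomposition of Theorem \ref{Automorphism group of star graph} as $\varphi=\alpha\tau\rho_o\rho_e$, with $\alpha\in\Inn(W)\p$, $\tau\in\T$, $\rho_o\in\diag_o$ and $\rho_e\in\diag_e$ (Lemma \ref{structure of diagonal automorphisms}). By Step 1, the element $\alpha^{-1}\varphi\rho_o^{-1}=\tau\,(\rho_o\rho_e\rho_o^{-1})$ is special. Since $\diag_o$ and $\diag_e$ commute, this element equals $\tau\rho_e$.

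If $\rho_e\neq 1$, pick $j\in I_\even$ with $\rho_e(j)=k\neq j$. Note that $\tau$ sends $s_k$ to $s_k$ or $s_0s_k$, and $s_0s_k$ is conjugate to neither $s_j$ nor $s_k$, since the conjugacy classes of $s_j$ and $s_k$ are distinct singletons-classes by Lemma \ref{conjugacy lemma through odd graphs}. In either case $\tau\rho_e(s_j)$ is not conjugate to $s_j$, a contradiction; so $\rho_e=1$.

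If $\tau=\prod\tau_i^{\epsilon_i}$ is nontrivial, then for some $i$ the element $\tau(s_i)=s_0s_i$ is not conjugate to $s_i$. This holds because $s_0s_i$ is not conjugate to any element of $S$: the conjugacy class of $s_0s_i$ maps to a nontrivial element in $W/\langle\langle s_i\rangle\rangle_W$, which is a simple abelianization check. So $\tau=1$ as well.

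This gives $\Spe(W)=(\Inn(W)\p)\diag_o$. The product is semidirect because $\diag_o$ normalizes $\Inn(W)$ and $\p$ (Lemma \ref{Diagrams normalize the rest of the automorphisms}), and $\diag_o\cap(\Inn(W)\p)=1$ by the same argument as in Theorem \ref{Automorphism group of star graph}.

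\emph{Step 3: the full decomposition.} $\Spe(W)$ is normal in $\Aut(W)$, since the condition ``preserves conjugacy classes of reflections'' is invariant under conjugation by automorphisms. It meets $\T\rtimes\diag_e$ trivially by Step 2. Together with Theorem \ref{Automorphism group of star graph}, this gives $\Aut(W)=\Spe(W)\rtimes(\T\rtimes\diag_e)$. Here $\T\rtimes\diag_e$ is a subgroup, since $\diag_e$ normalizes $\T$ by the formula $\rho^{-1}\tau_i\rho=\tau_{\rho^{-1}(i)}$.

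\emph{Step 4: when $\Aut(W)=\Spe(W)$.} From Step 3, $\Aut(W)=\Spe(W)$ exactly when $\T=1$ and $\diag_e=1$. We have $\T=1$ iff $n_2=0$, and $\diag_e\cong\prod_l \s_{\beta_l}$ is trivial iff every $\beta_l=1$.

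\emph{Main obstacle.} The delicate point is showing that $\tau\rho_e\neq1$ really fails to be special. This needs a clean non-conjugacy statement, namely that $s_0s_i$ is not conjugate to $s_i$ for $i\in I_2$. The cleanest route is to map to the abelianization $W^{ab}$, where $s_0s_i\mapsto \bar s_0+\bar s_i$ while $s_i\mapsto\bar s_i$. For $i\in I_2$, $s_0$ is not conjugate to $s_i$, so $\bar s_0\ne0$ and these images differ. This argument handles the $\rho_e$ and $\tau$ cases simultaneously.
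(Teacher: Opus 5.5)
Your proposal is correct and follows essentially the same route as the paper: you place $\Inn(W)$, $\p$ and $\diag_o$ inside $\Spe(W)$, then read off $\Spe(W)$ and the complement from the decomposition in Theorem~\ref{Automorphism group of star graph}. Your version is more careful than the paper's in two places: you exclude special elements of the combined form $\tau\rho_e$ using the abelianization $W^{ab}\cong(\mathbb{Z}_2)^{1+n_e}$, whereas the paper only asserts $\T\cap\Spe(W)=1$; and you correctly obtain $\T\rtimes\diag_e$, where the paper's proof writes $\T\times\diag_e$.
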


\begin{proof}
From the definitions of the inner automorphisms, the transvections, and the partial conjugations, we have $\T\cap \Spe(W)=1$ and $(\Inn(W))\p\leq \Spe(W)$. By Lemma $\ref{conjugacy lemma through odd graphs}$, the generators $s_i$ for all $i\in I_\odd$ belong to a single conjugacy class. Consequently, the diagram automorphisms of $\diag_o$ preserve the conjugacy class of the involutions. Then, by Theorem $\ref{Automorphism group of star graph}$, we get $\Spe(W)=(\Inn(W)\p)\rtimes \diag_o$. Moreover, $\diag_o$ commutes with both $\diag_e$ and $\T$ as its action is restricted to the vertices corresponding to the odd labels. Thus, we obtain the following splitting $\Aut(W) = \Spe(W)\rtimes (\T\times \diag_e)$.
\end{proof}

\begin{remark}
The complete structural characterization of $\Aut (W)$ is established by the descriptions of its constituent factors: the center-less property of $W$ yields  $\Inn (W) \cong W$ (Lemma \ref{W has trivial center}); the group $\p = \p_1 \times_{\p3} \p_2$ is determined by Lemmas \ref{Structure of P}, \ref{structure of C}, \ref{structure of P and C}; $\T \cong (\mathbb{Z}_2)^{n_2}$ follows from Lemma \ref{Structure of T}); and $\diag (W)$ is determined by the symmetries of the Coxeter diagram (Lemma \ref{structure of diagonal automorphisms}).
\end{remark}

Next, we discuss the natural short exact sequence $1 \to \Inn (W) \to \Aut (W) \to \Out (W) \to 1$. We begin with the following elementary number-theoretic observation.
 
\begin{lemma}\label{lemma for the splitting of unit modulo groups}
Let $m_1,m_2,\ldots, m_l > 1$ be some integers. Then the short exact sequence $1\to \mathbb{Z}_2\to \prod_{i=1}^l \mathbb{Z}^\times _{m_i}\to \Big(\prod_{i=1}^{l}\mathbb{Z}^\times _{m_i}\Big)/\mathbb{Z}_2\to 1$ splits if and only if there exists some index $1\leq i\leq l$ such that $m_i$ is divisible by either $4$ or a prime $p\equiv 3~(\modulo~4)$.
\end{lemma}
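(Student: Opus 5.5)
The plan is to read the statement as being about the diagonal copy of $\mathbb{Z}_2$. This is the copy relevant in the paper, since $\widehat{s_0}=\prod_{j\in I\setminus I_2}\phi_{j,m_j-1}$ corresponds under Lemma \ref{Structure of P} to $z:=(-1,-1,\ldots,-1)\in A:=\prod_{i=1}^l\mathbb{Z}_{m_i}^\times$. I will assume $m_i\ge 3$ for all $i$, so that $-1\neq 1$ in every factor; this holds for $i\in I\setminus I_2$. The argument is a general criterion for when an order-two subgroup of a finite abelian group is a direct summand, followed by an elementary number-theoretic computation.

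\textbf{Step 1 (group-theoretic reduction).} I will show that the sequence splits if and only if $z$ is not a square in $A$.

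For necessity, suppose $A=\langle z\rangle\times B$ and $z=a^2$. Write $a=z^\epsilon b$ with $b\in B$. Then $a^2=b^2\in B$, which forces $z\in B$, a contradiction.

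For sufficiency, suppose $z\notin A^2$. I will verify that $\langle z\rangle$ is a pure subgroup, meaning $\langle z\rangle\cap A^k=\langle z\rangle^k$ for every $k$.
\begin{itemize}
\item For $k$ odd this is automatic, since $z=z^k\in A^k$.
\item For $k$ even we have $A^k\subseteq A^2$, so $z\notin A^k$. Hence both sides equal $\{1\}$.
\end{itemize}
A pure bounded subgroup of an abelian group is a direct summand (Kulikov/Prüfer). Alternatively, one can argue directly on the $2$-primary component. There, $z$ has height $0$, so $\langle z\rangle$ extends to a cyclic summand of order $2$ in the standard way. This yields the splitting.

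\textbf{Step 2 (reduction to components).} Squares in a direct product are taken componentwise. Therefore $z$ is a square in $A$ if and only if $-1$ is a square in $\mathbb{Z}_{m_i}^\times$ for every $i$. Combined with Step 1, the sequence splits if and only if some $\mathbb{Z}_{m_i}^\times$ has $-1$ as a non-square.

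\textbf{Step 3 (number theory).} It remains to show that $-1$ is a square modulo $m$ if and only if $4\nmid m$ and no prime $p\equiv 3 \pmod 4$ divides $m$.
\begin{itemize}
\item By the Chinese Remainder Theorem it suffices to check prime powers dividing $m$.
\item Modulo $2$, $-1$ is trivially a square.
\item Modulo $2^a$ with $a\ge 2$, $-1$ is not a square, since squares of odd numbers are $\equiv 1\pmod 4$.
\item For an odd prime $p$, $-1$ is a square modulo $p^a$ if and only if it is a square modulo $p$. This follows from Hensel's lemma, or from the cyclicity of $\mathbb{Z}_{p^a}^\times$. Modulo $p$, $-1$ is a square if and only if $p\equiv 1\pmod 4$, by Euler's criterion.
\end{itemize}
Hence $-1$ fails to be a square modulo $m_i$ exactly when $m_i$ is divisible by $4$ or by a prime $p\equiv 3\pmod 4$, which gives the stated criterion.

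The only step needing care is the sufficiency in Step 1, namely the purity argument. The remaining steps are routine.
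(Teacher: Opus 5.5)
Your proof is correct. The paper gives no proof of this lemma; it calls it an ``elementary number-theoretic observation.'' So your argument fills a gap rather than paralleling an existing proof.

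You are right to make explicit what the statement leaves implicit:
\begin{itemize}
\item the embedded $\mathbb{Z}_2$ is the diagonal $\langle(-1,\ldots,-1)\rangle$, which equals $\langle\widehat{s_0}\rangle$ because $\phi_{j,t}$ acts on $W_j$ by $s_0s_j\mapsto(s_0s_j)^t$;
\item some $m_i\geq 3$ is needed for this element to be nontrivial.
\end{itemize}

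The only trace of the authors' intended argument is in the proof of Lemma~\ref{theorem-inner splits with non-trivial automorphisms}, which proves sufficiency by hand. If $m_1$ satisfies the divisibility condition, then $\langle -1\rangle$ is a direct factor of $\mathbb{Z}_{m_1}^\times$ with some complement $\Q_{1,1}$. The subgroup $\Q_{1,1}\times\prod_{j\neq 1}\mathbb{Z}_{m_j}^\times$ is then a complement of the diagonal.

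Your reduction to ``$z$ is not a square in $A$'' is more uniform. It gives necessity in two lines, and necessity is the direction the paper actually cites this lemma for. The price is invoking the theorem that bounded pure subgroups are direct summands, and that citation can be avoided. Since $A/A^2$ is an $\mathbb{F}_2$-vector space in which $z$ has nonzero image, some homomorphism $\chi\colon A\to\{\pm 1\}$ satisfies $\chi(z)=-1$. Then $\ker\chi$ is an index-two complement of $\langle z\rangle$.

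Concretely, take $\chi$ to be the projection onto $\mathbb{Z}_{m_i}^\times$ followed by either of these characters:
\begin{itemize}
\item the Legendre symbol modulo a prime $p\equiv 3\pmod{4}$ dividing $m_i$;
\item reduction modulo $4$, when $4\mid m_i$.
\end{itemize}
With this choice, sufficiency needs only the existence of these characters, not the full square criterion of your Step~3.
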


\begin{lemma}\label{theorem-inner splits with non-trivial automorphisms}
The short exact sequence \begin{equation}\label{inner splits with the non-trivial automorphisms}
        1\to \Inn(W)\to \Inn(W)\p\to \Inn(W)\p/\Inn(W)\to 1
    \end{equation} splits if and only if there exists some index $i\in I$ such that $m_i$ is divisible by either $4$ or a prime $p\equiv 3$ $(\modulo~4)$.
\end{lemma}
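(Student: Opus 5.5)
Since $\Inn(W)\cong W$ is centerless (Lemma \ref{W has trivial center}), the plan is to reduce the splitting question for the sequence \eqref{inner splits with the non-trivial automorphisms} to the splitting question in Lemma \ref{lemma for the splitting of unit modulo groups}. First I identify the quotient. We have $\Inn(W)\p/\Inn(W)\cong \p/(\Inn(W)\cap\p)$, and by Lemma \ref{inner intersection P}, $\Inn(W)\cap\p=\langle \widehat{w}\mid w\in \C_W(s_0)\rangle\cong \C_W(s_0)=\langle s_0\rangle\times\Delta$. Thus the sequence splits if and only if some subgroup $Q\le \p$ satisfies $Q\cap \Inn(W)=1$ and $Q(\Inn(W)\cap\p)=\p$, i.e.\ $Q$ is a complement to $\widehat{\C_W(s_0)}$ in $\p$. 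The natural candidate for the $\Delta$-part is $\p_2^o\rtimes \p_2^e$ with the inner pieces $\widehat{\Delta_j}=\prod_{k\ne j}\psi_{j,k}$ removed. The obstruction should sit entirely in the $\langle\widehat{s_0}\rangle$ factor, where $\widehat{s_0}=\prod_{j\in I\setminus I_2}\phi_{j,m_j-1}$ lies in $\p_1\cong\prod_{j\in I\setminus I_2}\mathbb{Z}_{m_j}^\times$ as the element $(-1,\dots,-1)$.

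Next I construct a complement under the number-theoretic hypothesis. By Lemma \ref{lemma for the splitting of unit modulo groups}, if some $m_i$ is divisible by $4$ or by a prime $p\equiv 3 \pmod 4$, there is a complement $C_1$ to $\langle\widehat{s_0}\rangle$ in $\p_1$. For $i\in I_2$, $m_i=2$ is not divisible by $4$ and is not such a prime, so this index lies in $I\setminus I_2$ and the lemma applies directly. I then need a complement to $\langle\widehat{\Delta_j}\rangle$ inside $\p_2$ compatible with $C_1$. In $\p_2^o\rtimes\p_2^e$, I would take the subgroup of automorphisms fixing one chosen generator: for example, if $n_o\ge1$, those fixing a fixed $s_{j_0}$ with $j_0\in I_\odd$. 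The index $j_0$ sees the whole $\Delta$ freely, since $\p_2^o(j_0)\cong\Delta$. If $n_o=0$, one can fix $s_{j_0}$ with $j_0\in I_\even$ and handle the leftover $\langle\Delta_{j_0}\rangle$ directly. Equivalently, I normalize $\varphi\in\p$ by composing with an inner automorphism in $\widehat{\C_W(s_0)}$ so that $\varphi(s_{j_0})$ is in a prescribed normal form; the set of such normalized automorphisms is a subgroup. Combined with $C_1$ and $\p_3$ handled through the amalgam $\p=\p_1\times_{\p_3}\p_2$ (Lemma \ref{structure of P and C}), this should give the required complement $Q$.

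For the converse, I assume a complement $Q$ exists and push it down to the $\p_1$-level to get a complement to $(-1,\dots,-1)$ in $\prod\mathbb{Z}^\times_{m_j}$. The tool is a homomorphism $\pi:\p\to\p_1/\p_3$-type data: record for each $i\in I\setminus I_2$ the ``angle'' $t$ with $\varphi(s_i)$ conjugate, by an element of $\C_W(s_0)$, to $s_i(s_0s_i)^{t-1}$. I would define $\pi$ via the induced action on $W_i$ modulo $\C_W(s_0)\cap W_i$-conjugation, obtaining $\p\to\prod_{i}\mathbb{Z}_{m_i}^\times/\langle\pm1\rangle$, or better to $\prod\mathbb{Z}_{m_i}^\times$ itself, which sends $\widehat{\Delta}$ trivially and $\widehat{s_0}$ to $(-1,\dots,-1)$. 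The image of $Q$ would then complement $\langle -1\rangle$, and Lemma \ref{lemma for the splitting of unit modulo groups} gives the condition.

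The main obstacle is making $\pi$ well defined. Conjugation by $s_0$ within $W_i$ sends $t$ to $-t$ when $i\in I_\even$, where $\Delta_i$ is central in $W_i$. So $\p_3$ elements, namely $\phi_{j,m_j-1}=\psi_{i,j}\sigma_{i,j}$, which lie in $\p_2$ when $n_2\ge1$, muddle the sign. I would handle this by splitting into the cases $n_2=0$ and $n_2\ge1$. In the second case, I would verify that the sign ambiguity in individual coordinates is still compatible with the diagonal $-1$ obstruction, possibly by tracking the products over all $j\in I\setminus I_2$.
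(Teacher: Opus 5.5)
\textbf{Verdict: the ``if'' direction is fine, but the ``only if'' direction has a genuine gap.}

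\textbf{Sufficiency.} This half can be completed and matches the spirit of the paper: take a complement to $\langle\widehat{s_0}\rangle$ in $\p_1$, and a complement to $\widehat{\Delta}$ obtained by pinning one leaf generator. One checks directly that every $\phi_{k,t}$ commutes with every $\psi_{i,j}$; the key fact is $\phi_{i,t}(\Delta_i)=\Delta_i^{t}=\Delta_i$ for $t$ odd. Together with Lemmas \ref{intersection of P and C} and \ref{structure of C} this gives $\p=\p_1\times(\p_2^o\rtimes\p_2^e)$, so the product of your two complements works. When $n_o=0$, the stabilizer of $s_{j_0}$ still contains $\widehat{\Delta_{j_0}}$. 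You can cut it out with the homomorphism recording the $\Delta_{j_0}$-parity of the conjugator of a second generator $s_{j_1}$.

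\textbf{Necessity: two problems.} First, your reformulation ``the sequence splits iff some $Q\le\p$ complements $\Inn(W)\cap\p$ in $\p$'' is only valid from right to left. A complement to $\Inn(W)$ in $\Inn(W)\p$ consists of elements $\widehat{w}\varphi$ and need not lie in $\p$. So a map $\pi$ defined only on $\p$ cannot be applied to it.

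Second, $\pi$ is never actually constructed. Recording the angle of $\varphi(s_i)$ modulo conjugation by $\C_W(s_0)$ fixes it only up to sign in \emph{every} coordinate. This is because $s_0(s_0s_i)s_0=(s_0s_i)^{-1}$ in each $W_i$, not only for $i\in I_\even$. In $\prod_i\mathbb{Z}_{m_i}^\times/\{\pm1\}$ the element $\widehat{s_0}$ becomes trivial, so the obstruction you want to detect vanishes. Your last paragraph names this problem but does not solve it.

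\textbf{The missing idea.} Avoid $\p_2$ entirely and let a character of $W$ absorb the inner part. The paper's own proof simply asserts that splitting of \eqref{inner splits with the non-trivial automorphisms} forces $1\to\langle\widehat{s_0}\rangle\to\p_1\to\p_1/\langle\widehat{s_0}\rangle\to1$ to split; the argument below supplies exactly that step.

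Let $Q$ complement $\Inn(W)$ in $\Inn(W)\p$, and set $Q_1:=Q\cap\Inn(W)\p_1$. Then $Q_1$ complements $\Inn(W)$ in $\Inn(W)\p_1$: if $x=iq$ with $i\in\Inn(W)$ and $q\in Q$, then $q=i^{-1}x\in\Inn(W)\p_1$.

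Let $U=\prod_{i\in I\setminus I_2}\mathbb{Z}_{m_i}^\times$, and let $\mathrm{ang}\colon\p_1\to U$ be the isomorphism of Lemma \ref{Structure of P}, so $\mathrm{ang}(\widehat{s_0})=-\mathbf 1$. Let $\chi\colon W\to\{\pm\mathbf 1\}\le U$ be the character with $\chi(s_i)=-\mathbf 1$ for all $i\in\bar I$. Define $F(\widehat{w}\phi)=\chi(w)\,\mathrm{ang}(\phi)$ for $w\in W$ and $\phi\in\p_1$.

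\textbf{$F$ is well defined.} Since $\Z(W)=1$ and $\Inn(W)\cap\p_1=\langle\widehat{s_0}\rangle$, the only ambiguity is $(w,\phi)\leftrightarrow(ws_0,\widehat{s_0}\phi)$. This is harmless because $\chi(s_0)\,\mathrm{ang}(\widehat{s_0})=\mathbf 1$.

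\textbf{$F$ is a homomorphism.} We have $\phi\widehat{w}\phi^{-1}=\widehat{\phi(w)}$, $U$ is abelian, and $\chi\circ\phi=\chi$, since each $\phi\in\p_1$ sends every $s_i$ to a reflection.

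\textbf{$F(Q_1)$ complements $\langle-\mathbf 1\rangle$ in $U$.} We have $F(\Inn(W))=\langle-\mathbf 1\rangle$ and $F(\p_1)=U$, hence $F(Q_1)\langle-\mathbf 1\rangle=U$. If $F(\widehat{w}\phi)\in\langle-\mathbf 1\rangle$ with $\widehat{w}\phi\in Q_1$, then $\mathrm{ang}(\phi)=\pm\mathbf 1$, so $\phi\in\langle\widehat{s_0}\rangle$. Hence $\widehat{w}\phi\in Q_1\cap\Inn(W)=1$.

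Lemma \ref{lemma for the splitting of unit modulo groups} now gives the divisibility condition.
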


\begin{proof}
Suppose the short exact sequence \eqref{inner splits with the non-trivial automorphisms} splits. Since $\p_1\cong \prod_{i\in I\setminus I_2} \mathbb{Z}_{m_i}^\times$ and $\Inn(W)\cap \p_1 = \langle \widehat{s_0}\rangle \cong \mathbb{Z}_2$, it follows that the sequence $1 \to \langle \widehat{s_0}\rangle \to \p_1 \to \p_1/\langle \widehat{s_0}\rangle \to 1$ also splits. Then, by the Lemma $\ref{lemma for the splitting of unit modulo groups}$, there exists some index $i\in I$ such that $m_i$ satisfies the given divisibility condition.

Conversely, assume there exists $i\in I$ such that $m_i$ is divisible by $4$ or a prime $p\equiv 3~(\modulo~4)$. Without loss of generality, let $i=1$. We construct a complement $\Q$ of $\Inn (W)$ in $\Inn (W) \p$ by analysing $\p_1$ and $\p_2$ separately.

Since $m_1$ is divisible by either $4$ or a prime $p\equiv 3$ $(\modulo~4)$, the abelian group $\p_{1,1} = \{ \phi_{1,t} \mid 1\leq t \leq m_1 - 1, \gcd (t, m_1) = 1\}  \cong \mathbb{Z}_{m_1}^\times$ contains $\langle \phi_{1,{m_1-1}} \rangle \cong \mathbb{Z}_2$ as a direct factor. Let $\Q_{1,1}$ be a complement of $\langle \phi_{1,{m_1-1}} \rangle$ in $\p_{1,1}$. Define $\Q_1 := \Q_{1,1}\times \prod_{j\neq 1}\p_{1,j}$. We then have $\p_1= \langle \phi_{1,{m_1-1}}\rangle \times \Q_1.$ Since $\phi_{1,{m_1-1}} =\widehat{s_0}\prod_{j\neq 1}\phi_{j,{m_j-1}}$, it follows that $\phi_{1,{m_1-1}} \in \Inn(W)\Q_1$. This inclusion implies the equality $\Inn(W)\p_1=\Inn(W)\Q_1$.  

Similarly, for each $j \in I_\even$, we decompose $\p_{2,j}$ by defining the subgroups $\R_{2,j}$ and their complements $\Q_{2,j}$ as follows:
\begin{itemize}
\item If $j=1 \in I_\even$, let $\R_{2,1} = \langle \psi_{1,2} \rangle$ and $\Q_{2,1} = \langle \psi_{1,k} \mid k \neq 1,2 \rangle$.
\item If $j \in I_\even \setminus (I_2 \cup \{1\})$, let $\R_{2,j} = \langle \psi_{j,1} \rangle$ and $\Q_{2,j} = \langle \psi_{j,k} \mid k \neq 1,j \rangle$.
\item If $j \in I_2$, let $\R_{2,j} = \langle \psi_{j,1}, \sigma_{j,1} \rangle$ and $\Q_{2,j} = \langle \psi_{j,k}, \sigma_{j,k} \mid k \neq 1,j \rangle$.
\end{itemize}
In each case, we have $\p_{2,j} = \R_{2,j} \times \Q_{2,j}$ and $\R_{2,j} \leq \Inn (W)\Q_{2,j}$. Let $\Q_2 = \langle \Q_{2,j} \mid j \in I_\even \rangle$. Then $\Inn (W)\p_2 = \Inn (W)\Q_2$.

Let $\Q$ be the subgroup generated by $\Q_1$ and $\Q_2$. By construction, $\Inn (W) \p = \Inn (W) \Q$. To establish the splitting, we show that $\Inn (W) \cap \Q = 1$. Any automorphism $\varphi \in \Inn(W) \cap \Q$ is of the form $\widehat{w}$ for some $w \in \C_W(s_0)$. Since $\Q_2$ fixes $s_0$, and by the construction of $\Q_1$, we have $$\varphi (s_1) = s_1^{w},\; \text{where}\; w \in W_1 \cap \C_W(s_0)=\begin{cases}
    \langle s_0\rangle, &\text{if}~ 1\in I_\odd,\\
    \langle s_0\rangle \times \langle \Delta_1\rangle, & \text{if}~1\in I_\even.
\end{cases}$$

If $w = s_0$, then the relation $\phi_{1, m_1-1} =\widehat{s_0}\prod_{i\neq 1}\phi_{i, m_i-1}$ would imply $\phi_{1, m_1-1}\in \Q$, which contradicts the fact that $\Q_{1,1}$ is a complement to $\langle \phi_{1, m_1-1} \rangle$. Similarly, if $1\in I_\even$, and $w\in \{\Delta_{1}, s_0\Delta_1\}$, then we can write $\phi_{1, m_1-1}$ as
\begin{equation*}
    \phi_{1, m_1-1}=\begin{cases}
        \widehat{s_0}\prod_{i\neq 1}\psi_{1,i}, & \text{if}~w=\Delta_1,\\
        \widehat{s_0}\prod_{i\neq 1}\phi_i^{m_i-1}\psi_{1,i}, & \text{if}~w=s_0\Delta_1.
    \end{cases}
\end{equation*}
In both cases, this would imply $\phi_{1, m_1-1} \in \Q$, which is again a contradiction. Thus, we must have $w=1$, which implies $\Inn(W) \cap \Q = 1$. Hence, $\Inn(W)\p = \Inn(W)\rtimes \Q$, and we conclude that the sequence \eqref{inner splits with the non-trivial automorphisms}.
\end{proof}

\begin{theorem}\label{Splitting-of-AutW}
    The short exact sequence
  \begin{equation}\label{inner splits with outer}
        1\to \Inn(W)\to \Aut(W)\to \Out(W)\to 1
    \end{equation} splits if there exists some index $i\in I$ such that $m_i$ is divisible by either $4$ or a prime $p\equiv 3$ $(\modulo~4)$ and multiplicity of the label $m_i$ is $1$. In particular, $$\Out(W) = (\Q\rtimes \T )\rtimes\diag(W).$$
\end{theorem}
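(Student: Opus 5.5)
We build an explicit complement to $\Inn(W)$ in $\Aut(W)$ by extending the complement $\Q$ of Lemma \ref{theorem-inner splits with non-trivial automorphisms}. By Theorem \ref{Automorphism group of star graph}, $\Aut(W) = ((\Inn(W)\p)\rtimes\T)\rtimes\diag(W)$, and by Lemma \ref{theorem-inner splits with non-trivial automorphisms} we have $\Inn(W)\p = \Inn(W)\rtimes\Q$ once some $m_i$ satisfies the divisibility hypothesis. The candidate complement is therefore $\widetilde{\Q} := \langle \Q, \T, \diag(W)\rangle$. Splitting of \eqref{inner splits with outer} amounts to two facts: $\widetilde{\Q}\,\Inn(W) = \Aut(W)$, and $\widetilde{\Q}\cap\Inn(W)=1$. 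The first is immediate from the decomposition of $\Aut(W)$ and $\Inn(W)\p=\Inn(W)\Q$. The whole difficulty is in the second, and it requires $\T$ and $\diag(W)$ to normalize $\Q$, so that $\widetilde{\Q}=(\Q\rtimes\T)\rtimes\diag(W)$ as a set product.

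First we fix the choices. Take the index $i=1$ to be the one whose label $m_1$ satisfies the divisibility condition and has multiplicity one. Multiplicity one means every diagram automorphism fixes $s_1$; this is the reason for the hypothesis, since $\Q$ was built by singling out index $1$ (the complement $\Q_{1,1}$, and the $\R_{2,j}$ built from $\psi_{j,1}$, $\sigma_{j,1}$, $\psi_{1,2}$). Using the conjugation formulas in the proof of Lemma \ref{Diagrams normalize the rest of the automorphisms}, a diagram automorphism $\rho$ sends $\phi_{j,t}\mapsto\phi_{\rho^{-1}(j),t}$, $\psi_{j,k}\mapsto\psi_{\rho^{-1}(j),\rho^{-1}(k)}$ and $\sigma_{j,k}\mapsto\sigma_{\rho^{-1}(j),\rho^{-1}(k)}$, while fixing index $1$. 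Hence $\rho$ preserves $\Q_{1,1}$, permutes the $\p_{1,j}$ for $j\neq1$, and permutes the generating sets $\{\psi_{j,k},\sigma_{j,k}: k\neq 1,j\}$. The subtle point is $\Q_{2,1}$, which was defined by excluding $\psi_{1,2}$. I would redefine it by excluding $\psi_{1,k_0}$ with $k_0$ chosen $\diag$-invariantly. If no canonical choice exists, I would instead replace the construction for $j=1\in I_\even$ by excluding a $\diag$-stable combination, e.g.\ using $\widehat{\Delta_1}=\prod_{k\neq1}\psi_{1,k}$ and the product relation. This step is the one I expect to be the main obstacle, and I would first try to show that the proof of Lemma \ref{theorem-inner splits with non-trivial automorphisms} goes through for any choice of excluded $k$.

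For $\T$, the computation in Lemma \ref{structure of T P C} gives $\tau_i\psi_{i,k}\tau_i=\sigma_{i,k}$, and $\T$ commutes with $\p_1$ (Lemma \ref{T and P commute}). Hence $\T$ preserves each $\Q_{2,j}$, since $\Q_{2,j}$ contains $\psi_{j,k}$ and $\sigma_{j,k}$ together for $j\in I_2$, and $\T$ preserves $\Q_1$. So $\T$ normalizes $\Q$, and $\diag(W)$ normalizes $\T$.

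Finally, for trivial intersection, suppose $\widehat{w}=q\tau\rho$ with $q\in\Q$, $\tau\in\T$, $\rho\in\diag(W)$. Elements of $\Inn(W)\p$ send each $W_i$ to a conjugate, whereas a nontrivial $\rho$ permutes the non-conjugate $W_i$ (Lemma \ref{maximal parabolic subgroups are not conjugate}); this forces $\rho=1$. Next, $\T\cap\Spe(W)=1$ together with $\Inn(W)\Q\le\Spe(W)$ forces $\tau=1$. We are left with $\widehat{w}\in\Q\cap\Inn(W)=1$. Thus $\Out(W)\cong\widetilde{\Q}=(\Q\rtimes\T)\rtimes\diag(W)$.
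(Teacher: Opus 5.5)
Your plan is the paper's own: you take the complement $H=\langle \Q,\T,\diag(W)\rangle$, check that $\T$ normalizes $\Q$, and reduce splitting to $H\cap\Inn(W)=1$. Your trivial-intersection argument is cleaner than the paper's evaluation at $s_1$: you remove $\rho$ using $N\cap\diag(W)=1$, then $\tau$ using $\T\cap\Spe(W)=1$, and finish with $\Q\cap\Inn(W)=1$. One small correction: $\rho=\tau^{-1}q^{-1}\widehat{w}$ lies in $\Inn(W)\p\T$, not in $\Inn(W)\p$. This is harmless, since $\T$ also preserves each $W_i$.

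The gap is the step you flag yourself, and it is genuine. When $1\in I_\even$ you never produce a $\diag(W)$-stable $\Q$. The paper also skips this: it asserts that $\diag(W)$ normalizes $\Q\rtimes\T$ by citing Lemma \ref{Diagrams normalize the rest of the automorphisms}, which only covers $\T,\p_1,\p_2$. You are right that $\Q_{2,1}=\langle\psi_{1,k}\mid k\neq 1,2\rangle$ is moved by any $\rho$ with $\rho(2)\neq 2$.

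Neither of your repairs works in general.
\begin{itemize}
\item A $\diag(W)$-fixed $k_0$ exists only if some label other than $m_1$ has multiplicity one.
\item $\p_{2,1}$ is the permutation module $\mathbb{F}_2^{I\setminus\{1\}}$, and $\widehat{\Delta_1}$ is its all-ones vector. So a $\diag(W)$-stable complement to $\langle\widehat{\Delta_1}\rangle$ in $\p_{2,1}$ exists only if some $\diag(W)$-orbit on $I\setminus\{1\}$ has odd size.
\end{itemize}

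Worse, the strategy can fail outright. Take $m_1=4$ and $m_2=m_3=3$, with $\rho=(2\,3)$; this satisfies the theorem's hypotheses. Then $\p\cong(\mathbb{Z}_2)^5$ is abelian, and $\Inn(W)\cap\p=\langle\widehat{s_0},\widehat{\Delta_1}\rangle$ is centralized by $\rho$. Any $\Q'\le\p$ with $\Inn(W)\Q'=\Inn(W)\p$ contains some $q=v\psi_{1,2}$ with $v\in\Inn(W)\cap\p$. If $\Q'$ is $\rho$-stable, it then contains $\rho q\rho^{-1}q=\psi_{1,3}\psi_{1,2}=\widehat{\Delta_1}$.

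So in this case no complement of the form $(\Q'\rtimes\T)\rtimes\diag(W)$ with $\Q'\le\p$ exists. The paper's own $H$ here contains $\widehat{\Delta_1}$, so it is not a complement either.

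The missing idea is that the lifts of diagram automorphisms and of some $\psi_{1,k}$ must be twisted by inner automorphisms. In this example, $\langle\phi_{2,2},\ \psi_{1,2}\widehat{s_1},\ \rho\,\widehat{s_0}\rangle$ is a complement. One checks directly on generators that these three automorphisms satisfy the defining relations of $\mathbb{Z}_2\times D_8\cong\Out(W)$, so the group they generate maps isomorphically onto $\Out(W)$. Without a construction of this kind that works for arbitrary label multiplicities, your proposal does not establish the splitting, nor the stated form of $\Out(W)$ as an internal product.
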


\begin{proof} 
Suppose that there exists some $i\in I$ such that the label $m_i$ satisfies the given hypothesis. Without loss of generality, let $i=1$. 
By the Lemma $\ref{theorem-inner splits with non-trivial automorphisms}$, the short exact sequence \eqref{inner splits with the non-trivial automorphisms} splits, thus we have $\Inn(W)\p=\Inn(W)\rtimes \Q$. Furthermore Theorem \ref{Automorphism group of star graph}, implies that the automorphism group admits the decomposition $\Aut(W)=((\Inn(W)\rtimes \Q)\rtimes \T)\rtimes \diag(W)$. We next show that the subgroup $H := \langle \Q, \T, \diag(W) \rangle$ is a complement to $\Inn(W)$ in $\Aut (W)$.

From the proof of the Lemma $\ref{structure of T P C}$, it can be easily verified that $\T$ normalizes $\Q$ with $\Q\cap \T=1$, yielding the semi-direct product $\langle \Q, \T\rangle =\Q\rtimes \T$. Additionally, Lemma $\ref{Diagrams normalize the rest of the automorphisms}$ ensures that $\diag(W)$ normalizes $\Q\rtimes \T$. Since $\Q\rtimes \T\leq \p\rtimes \T = 1$ by Theorem $\ref{Automorphism group of star graph}$, it follows that $H =(\Q\rtimes \T)\rtimes \diag(W)$.

To prove that the sequence \eqref{inner splits with outer} splits, it remains to show that $\Inn (W) \cap H = 1$. Assume that $\widehat{w}\in \Inn(W)\cap H$. Then $\widehat{w}=\phi \tau \rho$ for some $\phi\in \Q$, $\tau\in \T$, and $\rho\in \diag(W)$. Since every element of $H$ fixes the central vertex $s_0$, we have $\widehat{w}(s_0)=ws_0w^{-1}=s_0$, i.e., $w\in \C_W(s_0)$. Since the index $1\notin I_2$ and the label $m_1$ occurs with multiplicity $1$, $\phi \tau \rho(s_1)=\phi(s_1)=ws_1w^{-1}$. Since $\Q_2$ also fixes generator $s_1$, we have $w\in W_1$. Thus, $w\in \C_W(s_0)\cap W_1$.

As shown in the proof of Lemma \ref{theorem-inner splits with non-trivial automorphisms}, the condition $w \in \C_W(s_0)\cap W_1$ forces $w=1$ under the given divisibility constraints on $m_1$. Thus, $\Inn(W)\cap H=1.$ Consequently, $\Aut(W)=\langle\Inn(W), \Q, \T, \diag(W)\rangle= \langle \Inn(W), H\rangle=\Inn(W)\rtimes H$, i.e., the short exact sequence $1\to \Inn(W)\to \Aut(W)\to \Out(W)\to 1$. In particular, $\Out(W)=H=(\Q\rtimes \T)\rtimes \diag(W)$.
\end{proof}

The findings in Theorem $\ref{Automorphism group of star graph}$ and Theorem $\ref{Splitting-of-AutW}$ recover, as a special case, the known description of the automorphism group of odd Coxeter groups whose finite diagram is a tree \cite[Theorem 3.6]{NS-21}, along with the corresponding splitting criteria \cite[Theorem 4.5]{NS-21}. We conclude this section with the $R_\infty$ property for groups $W \in \sw$.

\begin{theorem}\label{R-infinity Property}
All Coxeter groups $W\in \sw$ satisfy the $R_\infty$-property.
\end{theorem}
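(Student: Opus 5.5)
The plan is to use the standard route for $R_\infty$ in hyperbolic groups. Every non-elementary Gromov hyperbolic group has the $R_\infty$-property; this is the theorem of Levitt--Lustig, as extended by Fel'shtyn. So the main task is to show that each $W \in \sw$ is hyperbolic and non-elementary. The rank $3$ case with small labels needs separate care, and I would handle it directly.

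\emph{Hyperbolicity.} By Moussong's criterion, a Coxeter group $W$ is Gromov hyperbolic if and only if two conditions hold: it contains no $\mathbb{Z}\times\mathbb{Z}$, which is detected by the absence of special subsets $T\subseteq S$ such that $W_T$ is a Euclidean (affine) reflection group of rank $\ge 3$, and there are no pairs of commuting disjoint subsets $T_1,T_2$ with $W_{T_1},W_{T_2}$ both infinite.

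For the star-shaped diagram these conditions can be checked directly. Any subset $T\subseteq S$ containing two leaves $s_i,s_j$ ($i\ne j$) has $m(s_i,s_j)=\infty$. Hence the only spherical or affine candidates of rank $\ge 3$ must have at most one leaf, which gives rank at most $2$. So no affine subgroup of rank $\ge 3$ occurs.

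For the commuting-pair condition, suppose $T_1,T_2$ are disjoint and commute elementwise. Two leaves never commute, so if both $T_i$ contained a leaf we would need those leaves to commute, which is impossible. Thus one of them, say $T_2$, contains no leaf, so $T_2\subseteq\{s_0\}$ and $W_{T_2}$ is finite. Hence Moussong's criterion applies and $W$ is hyperbolic.

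\emph{Non-elementarity.} With $n\ge 2$ leaves, $W$ contains the parabolic subgroup $\langle s_1,s_2\rangle\cong \U_2=D_\infty$. In rank $\ge 3$ we in fact have $\langle s_1,s_2,s_3\rangle\cong\U_3$ whenever $n\ge 3$, which contains a free group of rank $2$. If $n=2$ (rank $3$), $W$ is a triangle-type group $\langle s_0,s_1,s_2\rangle$ with one infinite label. It is virtually free non-abelian unless it is virtually cyclic, and virtual cyclicity fails because $W$ surjects onto $\langle s_1,s_2\rangle\cong D_\infty$ modulo $\langle\langle s_0\rangle\rangle$, so care is needed. The cleanest approach is to note that $W=W_1 *_{\langle s_0\rangle} W_2$ is an amalgam of finite groups with index $\ge 2$ on both sides. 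If both indices equal $2$, that is $m_1=m_2=2$, then $W\cong (\mathbb{Z}_2\times\mathbb{Z}_2)*_{\mathbb{Z}_2}(\mathbb{Z}_2\times\mathbb{Z}_2)\cong \mathbb{Z}_2\times D_\infty$, which is elementary.

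\emph{Main obstacle: the elementary case.} The main obstacle is exactly this elementary case $n=2$, $m_1=m_2=2$. That group is reducible, so it lies outside the irreducible setting implicitly assumed. Still, its $R_\infty$-property must be checked by hand, since $\mathbb{Z}_2\times D_\infty$ is not hyperbolic non-elementary.

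For this case I would use the explicit description of $\Aut(W)$ from Theorem~\ref{Automorphism group of star graph}. Alternatively, one can argue directly: $\Z(W)=\langle s_0\rangle$ is characteristic, so every automorphism induces an automorphism of $W/\langle s_0\rangle\cong D_\infty$. Since $D_\infty$ has $R_\infty$, and a surjection onto a characteristic quotient with infinitely many twisted classes forces infinitely many twisted classes upstairs, the result follows. The same quotient argument (surject onto $D_\infty$ or $\U_{n_e}$ via a characteristic quotient) could serve as a backup for all cases. However, the hyperbolic route is the intended one.

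Outside this case, whenever the amalgam $W_1*_{\langle s_0\rangle}W_2$ has some index $>2$, or whenever $n\ge 3$, $W$ is virtually non-abelian free. Hence it is non-elementary hyperbolic, and the Levitt--Lustig theorem finishes the proof.
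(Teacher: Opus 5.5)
Your argument is correct and follows the paper's route. Moussong's criterion gives hyperbolicity, since distinct leaves always have label $\infty$, which rules out both affine subsystems of rank $\ge 3$ and commuting pairs of infinite parabolics; a rank-$2$ free subgroup gives non-elementarity; and Fel'shtyn's theorem on non-elementary hyperbolic groups finishes the proof. Your separate treatment of $n=2$, $m_1=m_2=2$ is more careful than the paper: there $W\cong\mathbb{Z}_2\times D_\infty$ is elementary, and you correctly pass to the characteristic quotient $W/\Z(W)\cong D_\infty$. The paper's free-subgroup witness requires some $i\in I\setminus I_2$ and so silently misses this case. In that case, rely on your direct quotient argument rather than Theorem~\ref{Automorphism group of star graph}, which presupposes $\Z(W)=1$.
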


\begin{proof}
Let $W\in \sw$ be a Coxeter group with the generating set $S$. Note that it does not contain any parabolic subgroups which is an affine Coxeter group. Furthermore, for any disjoint subsets $T_1, T_2 \subseteq S$ such that the corresponding parabolic subgroups $W_{T_1}$ and $W_{T_2}$ are infinite, there necessarily exist vertices $s \in T_1$ and $s' \in T_2$ with $m(s, s') = \infty$. This ensures that $W_{T_1}$ and $W_{T_2}$ never commute. Consequently, by Moussong's characterization \cite[Theorem 17.1]{Mo-88}, $W$ is a hyperbolic group. Since $W$ additionally contains a free subgroup of rank 2 (e.g., the subgroup generated by $s_is_j$ and $s_0s_is_js_0$ for $i\in I\setminus I_2$), it is a non-elementary hyperbolic group. Applying the result of Fel'shtyn \cite[Theorem 3]{Fe-04}, which states that the Reidemeister number of any automorphism of a non-elementary Gromov hyperbolic group is infinite, we conclude that $W$ possesses the $R_\infty$-property.
\end{proof}

\section{Rigidity}\label{Rigidity}
Brady, M\"uhlherr, McCammond, and Neumann introduced diagram twisting in \cite{BMMN-02}. 

\begin{definition}
    Let $(W, S)$ be a Coxeter system and $I, J\subseteq S$. The pair $(I,J)$ is called {\it $S$-admissible} if \begin{enumerate}
    \item[1.] $W_I$ is a finite standard parabolic subgroup, and $(I\cup I^{\perp})\cap J=\emptyset$, where $I^{\perp}:=\{s\in S \mid m(s,s_i)=2,~\text{for all~}s_i\in I\}$.
    \item[2.] For all $s_j\in J$ and $s_k\in K_{I,J}:=S\setminus( I\cup I^{\perp}\cup J)$, we have $m(s_j,s_k)=\infty$.
\end{enumerate} Then $\T_{I,J}(S):=  I\cup I^{\perp}\cup J\cup \{\Delta_{I}s_k\Delta_{I} \mid s_k\in K_{I,J}\}$
is a Coxeter generating set of $W$ which is contained in $S^W$ and it is called a twist of $S$ in $W_I$. 
\end{definition}

A twist is called {\it trivial} if either $J$ or $K_{I, J}$ is empty. The finite diagram $\mathcal{V}_{(W, S)}$ is isomorphic to $\mathcal{V}_{(W,\T_{I,J}(S))}$ if $\Delta_I\in \Z(W_I)$.

For instance, consider the following example illustrating the diagram twist (see \cite{Mu-00}).
\begin{figure}[ht]
    \centering
  \begin{tikzpicture}
	\begin{pgfonlayer}{nodelayer}
		\node [style=new style 0, color=black, scale=0.5] (0) at (1, 1.25) {};
		\node [style=new style 0, color=black, scale=0.5] (1) at (1, 0.25) {};
		\node [style=new style 0, color=black, scale=0.5] (2) at (0, -0.25) {};
		\node [style=new style 0, color=black, scale=0.5] (3) at (2, -0.25) {};
		\node [style=none] (4) at (1, 0) {$s_1$};
		\node [style=none] (5) at (0, 0) {$s_4$};
		\node [style=none] (6) at (2, 0) {$s_2$};
		\node [style=none] (7) at (1.3, 1.25) {$s_3$};
		\node [style=none] (8) at (2.5, 0.75) {};
		\node [style=none] (9) at (3.75, 0.75) {};
		\node [style=none] (10) at (3.1, 1) {\small{twist}};
		\node [style=none] (11) at (0.5, -0.25) {};
		\node [style=none] (12) at (1.5, -0.5) {};
		\node [style=new style 0, color=black, scale=0.5] (13) at (5, 1.25) {};
		\node [style=new style 0, color=black, scale=0.5] (14) at (5, 0.25) {};
		\node [style=new style 0, color=black, scale=0.5] (15) at (4, -0.5) {};
		\node [style=new style 0, color=black, scale=0.5] (16) at (6, -0.25) {};
		\node [style=new style 0, color=black, scale=0.5] (17) at (4.75, -0.25) {};
		\node [style=none] (18) at (4.75, -0.5) {};
		\node [style=none] (19) at (5.75, -0.5) {};
		\node [style=none] (20) at (5.3, 1.25) {$s_3$};
		\node [style=none] (21) at (5.25, 0.5) {$s_1$};
		\node [style=none] (22) at (6, 0) {$s_2$};
		\node [style=none] (23) at (4, -0.25) {$s_4^{s_2s_1s_2}$};
		\node [style=none] (24) at (6.75, 0.75) {};
		\node [style=none] (25) at (8, 0.75) {};
		\node [style=none] (26) at (7.4, 1) {\small{twist}};
		\node [style=new style 0, color=black, scale=0.5] (27) at (8.75, 0.75) {};
		\node [style=new style 0, color=black, scale=0.5] (28) at (9.75, 0.75) {};
		\node [style=new style 0, color=black, scale=0.5] (29) at (10.75, 0.75) {};
		\node [style=new style 0, color=black, scale=0.5] (30) at (11.75, 0.75) {};
		\node [style=none] (31) at (8.75, 0.5) {$s_3$};
		\node [style=none] (32) at (9.75, 0.5) {$s_1$};
		\node [style=none] (33) at (10.75, 0.5) {$s_2$};
		\node [style=none] (34) at (12.2, 0.5) {$s_4^{s_2s_1s_2}$};
	\end{pgfonlayer}
	\begin{pgfonlayer}{edgelayer}
		\draw (0) to (1);
		\draw (1) to (2);
		\draw (1) to (3);
		\draw [style=new edge style 1] (8.center) to (9.center);
		\draw[dotted] [style=new edge style 1, bend right] (11.center) to (12.center);
		\draw (13) to (14);
		\draw (14) to (16);
		\draw (15) to (17);
		\draw[dotted] [style=new edge style 1, bend right=15, looseness=1.25] (18.center) to (19.center);
		\draw [style=new edge style 1] (24.center) to (25.center);
		\draw (27) to (28);
		\draw (28) to (29);
		\draw (29) to (30);
	\end{pgfonlayer}
\end{tikzpicture}
\end{figure}

Let us consider the following two Coxeter systems, given as $W_1=\big\langle s_1,s_2,s_3,s_4 \mid s_i^2=1=(s_1s_j)^3,~1\leq i\leq 4,~2\leq j\leq 4\big\rangle$, and $W_2=\big\langle t_1,t_2,t_3,t_4 \mid t_i^2=1=(t_1t_j)^{3}=(t_2t_4)^3,~1\leq i\leq 4,~2\leq j\leq3\big\rangle$.
Denote $S=\{s_i\mid 1\leq i\leq 4\}$ and $T=\{t_i\mid 1\leq i\leq 4\}$. Take $I=\{s_1,s_2\}$ and $J=\{s_3\}$. Then $W_I$ is finite. Further, $I^{\perp}=\emptyset$ and $K_{I,J}=S\setminus \{s_1,s_2,s_3\}=\{s_4\}$, where $m(s_3,s_4)=\infty$. Therefore, $(I,J)$ is an $S$-admissible pair and hence $\T_{I,J}(S)=\{s_1,s_2,s_3,s_2s_1s_2s_4s_2s_1s_2\}$ is a twist of $S$ in the dihedral group $W_I\cong D_6$. Moreover, the map $\varphi: \T_{I,J}(S)\to T$, defined by $\varphi(s_i)=t_i~\text{for $1\leq i\leq 3$,}~\text{and}~\varphi(s_2s_1s_2s_4s_2s_1s_2)=t_4,$ is an isomorphism from $W_1$ onto $W_2$.
\medskip

The concept of the pseudo-transpositions was introduced by Mihalik in \cite{Mi-07}. 

\begin{definition}
Let $(W, S)$ be a Coxeter system. An element $\tau\in S$ is a pseudo-transposition if
\begin{enumerate}
\item there exists a unique $t\in S$ with $m(\tau,t)=2\pmod 4$;
\item for each $s\in S\setminus\{\tau,t\}$, one has $m(s,\tau)\in\{2,\infty\}$, and if $m(s,\tau)=2$ then $m(t,s)=2$.
\end{enumerate}
If $\tau\in S$ is a pseudo-transposition, then $(S\setminus\{\tau\})\cup\{\tau t\tau, \triangle_{\tau t}\}$ is also a Coxeter generating set of $W$.     
\end{definition}

This process of tweaking the Coxeter generating set by replacing a pseudo-transposition is referred to as {\it blowing up}. Note that the rank increases after each blowing up. We refer to the reverse process as {\it triangle elimination}.

\begin{figure}[ht]
\centering
\begin{tikzpicture}
	\begin{pgfonlayer}{nodelayer}
		\node [style=new style 0, scale=0.5, color=black] (0) at (2, 0) {};
		\node [style=new style 0, scale=0.5, color=black] (1) at (3.5, 0) {};
		\node [style=new style 0, scale=0.5, color=black] (2) at (1.75, -0.75) {};
		\node [style=new style 0, scale=0.5, color=black] (3) at (3.75, -0.75) {};
		\node [style=none] (4) at (4.25, 0.75) {};
		\node [style=none] (5) at (4.5, 0.5) {};
		\node [style=none] (6) at (4.5, 0) {};
		\node [style=none] (7) at (2.7, 0.25) {\tiny{$4k+2$}};
		\node [style=none] (8) at (5, 0) {};
		\node [style=none] (9) at (7, 0) {};
		\node [style=none] (10) at (6, 0.3) {\small{blowing up}};
		\node [style=none] (11) at (5, -0.5) {};
		\node [style=none] (12) at (7, -0.5) {};
		\node [style=none] (13) at (6, -0.85) {\small{triangle elimination}};
		\node [style=new style 0, scale=0.5, color=black] (14) at (8.5, 0) {};
		\node [style=new style 0, scale=0.5, color=black] (15) at (10, 0) {};
		\node [style=new style 0, scale=0.5, color=black] (16) at (8.25, -0.75) {};
		\node [style=new style 0, scale=0.5, color=black] (17) at (10.25, -0.75) {};
		\node [style=none] (18) at (10.75, 0.5) {};
		\node [style=none] (19) at (11, 0.25) {};
		\node [style=none] (20) at (11, -0.25) {};
		\node [style=none] (21) at (9.25, 0.17) {\tiny{$2k+1$}};
		\node [style=new style 0, scale=0.5, color=black] (22) at (9.25, 0.75) {};
		\node [style=none] (23) at (1.75, -0.25) {\tiny{$2$}};
		\node [style=none] (24) at (2.45, -0.65) {\tiny{$2$}};
		\node [style=none] (25) at (3.1, -0.65) {\tiny{$2$}};
		\node [style=none] (26) at (3.75, -0.25) {\tiny{$2$}};
		\node [style=none] (27) at (8.25, -0.25) {\tiny{$2$}};
		\node [style=none] (28) at (8.95, -0.65) {\tiny{$2$}};
		\node [style=none] (29) at (9.6, -0.65) {\tiny{$2$}};
		\node [style=none] (30) at (10.25, -0.25) {\tiny{$2$}};
		\node [style=none] (31) at (8.75, 0.5) {\tiny{$2$}};
		\node [style=none] (32) at (9.75, 0.5) {\tiny{$2$}};
	\end{pgfonlayer}
	\begin{pgfonlayer}{edgelayer}
		\draw (0) to (1);
		\draw (0) to (2);
		\draw (1) to (2);
		\draw (0) to (3);
		\draw (1) to (3);
		\draw [dotted] (1) to (6.center);
		\draw [dotted] (1) to (5.center);
		\draw [dotted] (1) to (4.center);
		\draw [style=new edge style 1] (8.center) to (9.center);
		\draw [style=new edge style 1] (12.center) to (11.center);
		\draw (14) to (15);
		\draw (14) to (16);
		\draw (15) to (16);
		\draw (14) to (17);
		\draw (15) to (17);
		\draw [dotted] (15) to (20.center);
		\draw [dotted] (15) to (19.center);
		\draw [dotted] (15) to (18.center);
		\draw (14) to (22);
		\draw (15) to (22);
	\end{pgfonlayer}
\end{tikzpicture}
\end{figure}

Now we recall the matching theorems for finitely generated Coxeter groups that provide intrinsic structural information about the maximal finite parabolic subgroups of the Coxeter groups, in the sense that they are almost invariant under the choice of the Coxeter generators for the Coxeter groups. Mihalik, Ratcliffe, and Tschantz have discussed the matching theorems in detail in \cite{MRT-07}.

\begin{definition}
A subset $C\subseteq S$ is a {\it simplex} of $(W, S)$ if for all $s,t\in C$ one has $m(s,t)<\infty$.
A simplex is \emph{spherical} if $\langle C\rangle$ is finite. Let $(W, S)$ be a Coxeter system.
\end{definition}  

\begin{definition}
A subset $B\subseteq S$ is called {\it basic} if it is irreducible, maximal, and $\langle B\rangle$ is a non-cyclic finite subgroup. If $B$ is basic, then $\langle B\rangle$ is a {\it basic subgroup} of $W$.
\end{definition}

\begin{lemma}\cite[Proposition 4.13]{MRT-07}\label{4.13[MRT07]}
Let $W$ be a finitely generated Coxeter group with two sets of Coxeter generators $S$ and $S'$, and let $M$ be a maximal spherical simplex of $(W, S)$. Then there is a unique maximal spherical simplex $M'$ of $(W, S')$ such that $\langle M\rangle $ and $\langle M'\rangle $ are conjugate in $W$.
\end{lemma}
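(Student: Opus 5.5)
This is \cite[Proposition 4.13]{MRT-07}, the matching theorem. The proof I would give works through the intrinsic characterization of maximal finite subgroups in Lemma \ref{finite subgroup is contained in finite parabolic}, Lemma \ref{maximals are not conjugate} and Lemma \ref{intersection of maximal finite subgroup is again parabolic}. These characterize maximal finite subgroups of $W$ without reference to any particular generating set.

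\textbf{Step 1.} Take a maximal spherical simplex $M$ of $(W,S)$. I first show that $\langle M\rangle$ is a maximal finite subgroup of $W$. Every finite subgroup containing $\langle M\rangle$ lies in a finite parabolic subgroup $w\langle C\rangle w^{-1}$ with $C\subseteq S$ spherical, by Lemma \ref{finite subgroup is contained in finite parabolic}. Enlarge $C$ to a maximal spherical simplex $C'$ of $(W,S)$. Then $\langle M\rangle \le w\langle C'\rangle w^{-1}$.

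Lemma \ref{maximals are not conjugate} says a maximal finite standard parabolic subgroup is not conjugate into a different finite standard parabolic subgroup. Hence $M=C'$ and $\langle M\rangle = w\langle C'\rangle w^{-1}$. So $\langle M\rangle$ is a maximal finite subgroup.

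The finite case needs a separate remark, since those lemmas assume $W$ infinite. If $W$ is finite, then $S$ itself is the unique maximal spherical simplex for every generating set, and the statement is trivial.

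\textbf{Step 2.} Now use the generating set $S'$. Since $\langle M\rangle$ is finite, Lemma \ref{finite subgroup is contained in finite parabolic} applied to $(W,S')$ gives $\langle M\rangle \le u\langle D\rangle u^{-1}$ for some spherical $D\subseteq S'$. Take $M'\supseteq D$ to be a maximal spherical simplex of $(W,S')$.

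By Step 1 applied to $(W,S')$, the subgroup $u\langle M'\rangle u^{-1}$ is finite. By Step 1 applied to $(W,S)$, $\langle M\rangle$ is maximal finite. Therefore $\langle M\rangle = u\langle M'\rangle u^{-1}$, which proves existence.

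\textbf{Step 3 (uniqueness).} Suppose $M''$ is another maximal spherical simplex of $(W,S')$ with $\langle M''\rangle$ conjugate to $\langle M\rangle$. Then $\langle M''\rangle$ is conjugate to $\langle M'\rangle$. Both are maximal finite standard parabolic subgroups of $(W,S')$, so Lemma \ref{maximals are not conjugate} forces $M''=M'$.

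\textbf{Main obstacle.} The delicate point is Step 1: the containment $\langle M\rangle\le w\langle C'\rangle w^{-1}$ must be turned into equality of generating sets, $M=C'$. This is exactly what the non-conjugacy in Lemma \ref{maximals are not conjugate} supplies, in the form of \cite[Lemma 11]{FH-03}. In practice that relies on the standard fact that a parabolic subgroup conjugate into a standard parabolic $\langle C'\rangle$ is already conjugate into it by an element of $\langle C'\rangle$, combined with rank considerations. If I had to argue it directly, I would compare ranks of parabolic closures to force $|M|=|C'|$, and then use maximality of $M$.
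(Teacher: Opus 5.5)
Your proof is correct. The paper gives no argument of its own for this statement (it only cites \cite[Proposition 4.13]{MRT-07}), and your route is the natural proof built from exactly the lemmas the paper quotes: you show $\langle M\rangle$ is a maximal finite subgroup via Lemmas \ref{finite subgroup is contained in finite parabolic} and \ref{maximals are not conjugate}, trap it inside a conjugate of some $\langle M'\rangle$ to force equality, use non-conjugacy of distinct maximal spherical simplices of $(W,S')$ for uniqueness, and handle the finite case separately.

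The one soft spot is your closing aside, which the proof does not rely on. Comparing ranks only yields $|M|\le|C'|$. A self-contained proof of the needed non-conjugacy would require the classical elementary-move description of conjugate standard parabolic subgroups (a maximal spherical subset admits no such moves), and that is precisely what Lemma \ref{maximals are not conjugate} supplies.
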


Note that, for the Coxeter systems $(W, S)$ where $\mathcal{V}_{(W, S)}$ is a star diagram, the subgroups generated by the maximal spherical simplices are precisely the maximal finite standard parabolic subgroups. Apart from the maximal simplices of type $\mathbb{Z}_2\times \mathbb{Z}_2$, the rest of the maximal simplices are precisely basic subsets of $S$.

The following matching results will be helpful to figure out the correspondence between the maximal simplices for any two Coxeter generators of $W\in \sw$.

\begin{lemma} \cite[Theorem 4.18]{MRT-07} \label{Basic Matching Theorem}
Let $W$ be a finitely generated Coxeter group with Coxeter generating sets $S$ and $S'$, and let $B$ be a basic subset of $S$. Then there exists a unique irreducible subset $B'$ of $S'$ such that
$\langle B\rangle'$ is conjugate to $\langle B'\rangle'$ in $W$.
Moreover
\begin{enumerate}
\item the set $B'$ is a basic subset of $(W, S')$, and we say that $B$ and $B'$ match;
\item if $|\langle B\rangle|=|\langle B'\rangle|$, then $B$ and $B'$ have the same type and there is an isomorphism $\phi:\langle B\rangle \rightarrow \langle B'\rangle$ that restricts to conjugation on $\langle B\rangle'$ by an element of $W$ and  we say that $B$ and $B'$ match isomorphically; 
\item if $|\langle B\rangle|<|\langle B'\rangle|$ and $\langle B\rangle$ has type $D_{2(2q+1)}$, then $\langle B'\rangle$ has type $D_{2(4q+2)}$. Moreover, there is a monomorphism $\phi:\langle B\rangle \rightarrow \langle B'\rangle$ that restricts to conjugation on $\langle B\rangle'$ by an element of $W$.
\end{enumerate}
\end{lemma}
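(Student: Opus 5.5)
This is the Basic Matching Theorem of Mihalik, Ratcliffe and Tschantz, and I would follow their strategy. Throughout, $\langle B\rangle'$ denotes the commutator subgroup of $\langle B\rangle$.

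The first step is to replace $\langle B\rangle$ by a maximal finite subgroup. Since $B$ is basic, it is a maximal irreducible spherical subset. By Lemma \ref{finite subgroup is contained in finite parabolic}, $\langle B\rangle$ lies in a maximal spherical simplex $M\supseteq B$ of $(W,S)$. Lemma \ref{4.13[MRT07]} then gives a unique maximal spherical simplex $M'$ of $(W,S')$ and $w\in W$ with $w\langle M\rangle w^{-1}=\langle M'\rangle$.

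The second step works inside the finite Coxeter group $\langle M'\rangle$. Decompose $M=B\sqcup M_0$ into irreducible components, so that $\langle M\rangle=\langle B\rangle\times\langle M_0\rangle$, and similarly decompose $M'$ into its irreducible components $C_1,\dots,C_r$. The commutator subgroup $\langle B\rangle'$ is a perfect-ish characteristic piece of an irreducible factor. More precisely, it is non-abelian, or index-bounded in the dihedral case. Since $B$ is non-cyclic irreducible, the image $w\langle B\rangle' w^{-1}$ must project nontrivially to exactly one component $C_j$; call this component $B'$. To show the projection hits only one factor, I would use the classification of finite irreducible Coxeter groups: the commutator subgroup of an irreducible finite Coxeter group is indecomposable as a direct product, and it is generated by elements whose conjugacy structure forbids a diagonal embedding into two factors. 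This would give $w\langle B\rangle'w^{-1}=\langle B'\rangle'$.

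The third step handles uniqueness and basicness. Uniqueness follows from the uniqueness of $M'$ together with the fact that distinct components have commuting, non-conjugate commutator subgroups. For basicness, reverse the roles of $S$ and $S'$: the set $B'$ matches some irreducible $B''\subseteq S$ with commutator subgroups conjugate. Maximality of $B$ then forces $B''=B$, which gives (1).

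For (2) and (3), compare orders. The subgroup $\langle B\rangle$ embeds, up to conjugation, in $\langle B'\rangle\times(\text{rest})$, with the commutator parts identified. If the orders are equal, the classification shows that a finite irreducible Coxeter group is determined by its order together with its commutator subgroup, up to the relevant isomorphism. One then extends conjugation on $\langle B\rangle'$ to an isomorphism $\phi$. If the orders differ, the only possibility allowed by that classification is $D_{2(2q+1)}$ sitting in $D_{2(4q+2)}\cong D_{2(2q+1)}\times\mathbb Z_2$, which is exactly the pseudo-transposition phenomenon. Here $\phi$ is the obvious monomorphism.

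\textbf{Main obstacle.} The hardest step is the second: showing that the commutator subgroup of the irreducible factor lands inside a single component of $M'$. This requires a case-by-case check over the finite irreducible types, with particular attention to the dihedral types with $m\equiv2\pmod4$, where the groups decompose.
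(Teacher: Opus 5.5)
The paper gives no proof of this statement; it simply quotes Theorem~4.18 of \cite{MRT-07}, so your outline has to stand on its own. Passing to conjugate maximal finite subgroups via Lemma~\ref{4.13[MRT07]} is fine. Your second step, however, carries all the weight, and it is not established: the reasons you give for it are false.

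\textbf{The second step.} The commutator subgroup of a finite irreducible Coxeter group need not be directly indecomposable. Type $\mathrm{I}_2(15)$, i.e.\ $D_{30}$, has commutator subgroup $\mathbb{Z}_{15}\cong\mathbb{Z}_3\times\mathbb{Z}_5$. Indecomposability would not rule out a diagonal embedding anyway. In fact no property of $\langle B\rangle'$ by itself can force $w\langle B\rangle'w^{-1}$ into one component of $M'$. In $D_8\times D_8$ (two components of type $\mathrm{B}_2$ with central longest elements $z_1,z_2$), the subgroup $\langle z_1z_2\rangle\cong\mathbb{Z}_2\cong\langle B\rangle'$ is normal and projects nontrivially to both factors.

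What excludes this is that $w\langle B\rangle w^{-1}$ is a \emph{direct factor} of $\langle M'\rangle$. The missing tool is the Krull--Remak--Schmidt theorem. Refine $w\langle B\rangle w^{-1}\times w\langle M_0\rangle w^{-1}$ and $\prod_j\langle C_j\rangle$ into directly indecomposable factors. A non-cyclic irreducible finite Coxeter group is either indecomposable or $\mathbb{Z}_2\times X$ with $X$ non-abelian and indecomposable (types $\mathrm{B}_{2q+1}$, $\mathrm{I}_2(4q+2)$, $\mathrm{H}_3$, $\mathrm{E}_7$). So each non-cyclic component contributes exactly one non-abelian indecomposable factor, with the same commutator subgroup as the component. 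The two refinements differ by a central automorphism, which fixes every commutator. Hence $w\langle B\rangle'w^{-1}=\langle C_j\rangle'$ for exactly one $j$. This replaces your case-by-case check by the short list above. Note also that ``projects nontrivially to only one factor'' would give only $w\langle B\rangle'w^{-1}\le\langle C_j\rangle'$, not the equality you state.

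\textbf{Parts (2) and (3).} The same tool is needed here, and your substitute fails. A finite irreducible Coxeter group is not determined by its order and its commutator subgroup: $W(\mathrm{H}_3)\cong\mathbb{Z}_2\times A_5$ and $W(\mathrm{A}_4)\cong S_5$ both have order $120$ and commutator subgroup $A_5$. It is the central automorphism that identifies the non-abelian factors and supplies a $\phi$ restricting to conjugation on $\langle B\rangle'$; ``extending the conjugation'' is not automatic. Also, unequal orders occur for $W(\mathrm{D}_{2q+1})$ against $W(\mathrm{B}_{2q+1})$, so ``the only possibility'' is false in general, though harmless under the dihedral hypothesis of (3).

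\textbf{The third step.} This is incomplete in two ways.
\begin{itemize}
\item Uniqueness must hold among \emph{all} irreducible subsets of $S'$, not only components of $M'$.
\item A component of a maximal spherical simplex need not be basic. With $m(a,b)=m(a,d)=3$, $m(c,d)=\infty$ and all other labels $2$, the set $\{a,b\}$ is a component of the maximal simplex $\{a,b,c\}$ but lies in $\{a,b,d\}$ of type $\mathrm{A}_3$. ``Reversing the roles'' presupposes that $B'$ is basic, which is exactly (1).
\end{itemize}

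You need two further ingredients.
\begin{itemize}
\item For non-cyclic irreducible spherical $C$, $\langle C\rangle'$ fixes no nonzero vector, so its parabolic closure in the same Coxeter system is $\langle C\rangle$. Hence conjugacy, or inclusion up to conjugacy, of such commutator subgroups forces the same for the parabolic subgroups.
\item The parabolic subgroup of a basic subset is not conjugate into that of any other irreducible spherical subset, since every Deodhar elementary move fixes a basic subset.
\end{itemize}
Apply these to a basic $D'\supseteq B'$ matched back into $S$. This gives $D'=B'$, and then uniqueness follows.
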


Note that the preceding lemma is a part of a general result called the Basic Matching Theorem. 

\begin{theorem} \cite[Simplex Matching Theorem]{MRT-07} \label{Simplex Matching Theorem}
If $W$ is a finitely generated Coxeter group with two sets of Coxeter generators $S$ and $S'$ whose basic subsets match isomorphically, then there is a bijection between the simplices of $(W, S)$ and the simplices $(W, S')$ so that the corresponding simplices generate isomorphic Coxeter systems. In particular, $|S| = |S'|$, and the presentation diagrams of $(W, S)$ and $(W, S')$ have the same multiset of edge labels.
\end{theorem}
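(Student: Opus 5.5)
The plan is to build the correspondence from the top down. We start from maximal spherical simplices, descend to their subsets, and obtain the general (possibly non-spherical) simplices last. By Lemma \ref{4.13[MRT07]}, every maximal spherical simplex $M$ of $(W,S)$ has a unique maximal spherical simplex $M'$ of $(W,S')$ with $\langle M\rangle$ conjugate to $\langle M'\rangle$. Running the lemma in both directions shows that $M\mapsto M'$ is a bijection on maximal spherical simplices.

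The first step is to upgrade ``conjugate subgroups'' to ``isomorphic Coxeter systems''. Decompose $M=B_1\sqcup\cdots\sqcup B_r\sqcup C$, where the $B_k$ are the irreducible components of order $>2$ and $C$ consists of the isolated vertices (the $\mathbb{Z}_2$ factors). Each $B_k$ is contained in a basic subset, and the basic subsets are matched isomorphically by hypothesis. Lemma \ref{Basic Matching Theorem}(2) then gives a conjugating element carrying $\langle B\rangle'$ onto $\langle B'\rangle'$ with equal types. Inside the finite group $\langle M'\rangle$, the irreducible components of $M'$ are recovered from the commutator subgroup together with the finite Coxeter classification. This yields a type-preserving matching $B_k\leftrightarrow B_k'$. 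The number of $\mathbb{Z}_2$ factors is then forced by comparing $|\langle M\rangle|=|\langle M'\rangle|$.

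The main obstacle is this step. The only failure of uniqueness of Coxeter type for a finite group is the phenomenon $D_{2(2q+1)}\times\mathbb{Z}_2\cong D_{4(2q+1)}$ of Lemma \ref{Basic Matching Theorem}(3). The isomorphic-matching hypothesis is exactly what excludes it. I would make this explicit by checking that a mismatch would force some basic $B$ to match a strictly larger $B'$, contradicting the hypothesis.

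Next, I would pass to arbitrary simplices. A simplex is determined by its pairs, and every pair $\{s,t\}$ with $m(s,t)<\infty$ is a spherical simplex, hence lies in some maximal spherical simplex. The type-preserving identification of $M$ with $M'$, restricted to subsets, gives a label-preserving bijection on spherical simplices. To see that it is independent of the chosen $M$, I would compare through conjugacy of the parabolic subgroups $\langle\{s,t\}\rangle$, using that the intersections of maximal finite subgroups are parabolic (Lemma \ref{intersection of maximal finite subgroup is again parabolic}).

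To count vertices, the rank-one parabolics correspond as well. Thus $|S|=|S'|$ follows by counting the vertices lying in maximal spherical simplices, each vertex being counted via its conjugacy class of reflections. A general simplex is then a vertex set all of whose pairs correspond, so the bijection extends, and the corresponding simplices carry the same labels and hence give isomorphic Coxeter systems. The multiset of edge labels follows immediately from the edge-level bijection.
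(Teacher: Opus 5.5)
The paper gives no proof of this statement; it quotes it from Mihalik--Ratcliffe--Tschantz. Your argument therefore has to stand on its own, and it has a genuine gap at the gluing step.

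\textbf{The gluing step fails.} You want the type-preserving identifications $f_M\colon M\to M'$ to agree on overlaps. You then extend to arbitrary simplices as ``vertex sets all of whose pairs correspond''. A compatible family of this kind amounts to a bijection $S\to S'$ preserving every finite label, i.e.\ an isomorphism $\mathcal{V}_{(W,S)}\cong\mathcal{V}_{(W,S')}$. The hypothesis does not give you that, because diagram twisting preserves isomorphic matching of basic subsets while changing the diagram.

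The twist example in Section~\ref{Rigidity} shows that ``independence of the chosen $M$'' is false, not merely unproved. There $S=\{s_1,s_2,s_3,s_4\}$ is a star centred at $s_1$ with all labels $3$. Set $\Delta=s_2s_1s_2$; then $S'=\T_{I,J}(S)=\{s_1,s_2,s_3,\Delta s_4\Delta\}$ has the path diagram $s_3 - s_1 - s_2 - \Delta s_4\Delta$. All basic subsets are of type $A_2$ and match isomorphically. The forced conjugacy correspondence of maximal spherical simplices is:
\begin{itemize}
\item[] $\{s_1,s_2\}\mapsto\{s_1,s_2\}$ and $\{s_1,s_3\}\mapsto\{s_1,s_3\}$;
\item[] $\{s_1,s_4\}\mapsto\{s_2,\Delta s_4\Delta\}$, since $\Delta\langle s_1,s_4\rangle\Delta=\langle s_2,\Delta s_4\Delta\rangle$.
\end{itemize}
Any choice of the $f_M$ sends $s_1$ into $\{s_1,s_3\}$ via $M=\{s_1,s_3\}$, and into $\{s_2,\Delta s_4\Delta\}$ via $M=\{s_1,s_4\}$. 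These sets are disjoint. Conjugacy of rank-one parabolics cannot break the tie, since all generators here are conjugate.

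So the bijection asserted by the theorem is, in general, not induced by any map of vertices. What is missing is a global matching of simplices of each Coxeter type, for instance showing that the two systems have the same number of simplices of every type. Such an argument must cope with the fact that $M\mapsto M'$ does not respect intersections: $\{s_1,s_3\}\cap\{s_1,s_4\}=\{s_1\}$, whereas their partners are disjoint.

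\textbf{Vertex count.} Counting vertices ``via conjugacy classes of reflections'' does not give $|S|=|S'|$. Many vertices can share one class, as above. Moreover $S^W$ and $S'^W$ need not coincide. For the transvection $\tau_i$ of Section~\ref{Structural Foundations}, $S'=\tau_i(S)$ contains $s_0s_i\notin S^W$, and $\langle s_i\rangle$ is conjugate to no $\langle s'\rangle$ with $s'\in S'$, although $S$ and $S'$ have identical basic subsets. In any case this detour is unnecessary: once a type-preserving bijection of simplices exists, $|S|=|S'|$ and the edge-label multiset follow immediately from the simplices of rank $1$ and $2$.

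\textbf{Type matching of $M$ and $M'$.} The dihedral identity is not the only ambiguity among finite Coxeter groups: $W(B_{2q+1})\cong W(D_{2q+1})\times\mathbb{Z}_2$, e.g.\ $B_3$ versus $A_3\times A_1$. Irreducible components are also not simply read off from the commutator subgroup. Further, a component $B_k$ of $M$ need not be basic. It may lie in a larger irreducible spherical subset not contained in $M$, so matching that basic subset says nothing directly about $B_k$. This step can be repaired by combining the classification of abstract isomorphisms between finite Coxeter groups with the uniqueness clause of the Basic Matching Theorem, applied in both directions. Even then it only yields $M\cong M'$, which the example above shows is not enough.
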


Based upon the results described above, we show that the Coxeter groups in $\sw$ are rigid up to diagram twisting and blowing up (or triangle elimination).

\begin{lemma}\label{structure of tree diagrams for groups in W(T)}
Let $W\in \sw$ with generating set $S$, and $S'$ be another Coxeter generating set for $W$. If the finite diagram $\mathcal{V}_{(W, S')}$ is a tree where each even labelled edge is incident to a vertex of degree one, then $\mathcal{V}_{(W, S')}\cong \mathcal{V}_{(W, S)}$, up to diagram twisting.
\end{lemma}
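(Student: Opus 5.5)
Our plan is to compare the maximal spherical simplices of $(W,S)$ and $(W,S')$ and then show that the tree $\mathcal{V}_{(W,S')}$ must itself be a star, possibly after twists.

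\textbf{Step 1: the simplices of $(W,S')$.} Since $\mathcal{V}_{(W,S')}$ is a tree, it has no triangles. Hence every simplex of $(W,S')$ has at most two vertices, and every maximal spherical simplex is an edge of $\mathcal{V}_{(W,S')}$, or an isolated vertex. By Lemma \ref{4.13[MRT07]}, there is a bijection $W_i\mapsto \langle M'_i\rangle$ between the maximal spherical simplices of $(W,S)$, namely the $n$ edges $\{s_0,s_i\}$, and those of $(W,S')$, with $\langle M'_i\rangle$ conjugate to $W_i$. In particular $\langle M'_i\rangle\cong W_i$. This rules out isolated vertices, since they would give order-$2$ maximal spherical subgroups. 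So $\mathcal{V}_{(W,S')}$ has exactly $n$ edges and $n+1$ vertices, and the multiset of edge labels agrees with $\{m_1,\dots,m_n\}$. The same label multiset follows from the Basic Matching Theorem (Lemma \ref{Basic Matching Theorem}), since conjugate subgroups have equal order and so every basic subset matches isomorphically. The edges labelled $2$ are handled by the conjugacy of the $\mathbb{Z}_2\times\mathbb{Z}_2$ simplices.

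\textbf{Step 2: locating $s_0$ in $S'$.} Each reflection in $S'$ lies in $S^W$ up to the usual considerations. We use the intersection argument of Lemma \ref{s0 maps to s0 under automorphism of W}: the $W_i$ pairwise intersect in $\langle s_0\rangle$, and since the $\langle M'_i\rangle$ are conjugates of the $W_i$, their pattern of shared vertices is constrained by Lemma \ref{intersection of maximal finite subgroup is again parabolic}. We will use the hypothesis that each even-labelled edge of $\mathcal{V}_{(W,S')}$ has a leaf endpoint. Counting conjugacy classes of reflections via odd paths (Lemma \ref{conjugacy lemma through odd graphs} and its analogue for $S'$) then identifies one vertex $t_0\in S'$ as the unique non-leaf endpoint of all even edges. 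The odd edges form a forest, and its components must collapse to a single conjugacy class. Comparing with $S$, where the odd edges form one star component containing $s_0$ and every even leaf is a singleton class, shows that the odd-edge forest of $S'$ is a tree containing $t_0$, with each even edge hanging off $t_0$ or off a vertex of that tree.

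\textbf{Step 3: untwisting (main obstacle).} The hard step is to show that if an edge $\{u,v\}$ of $\mathcal{V}_{(W,S')}$ is not incident to the chosen centre, a twist moves it there. Take an edge $\{a,b\}$ with $I=\{a,b\}$ spherical. Removing this edge splits the tree into two subtrees. Let $J$ be the vertex set of the subtree away from the centre, with $a,b$ excluded, and let $K_{I,J}$ be the rest. Because the diagram is a tree, every pair $s\in J$, $s'\in K_{I,J}$ has $m(s,s')=\infty$, and $I^{\perp}$ consists only of vertices joined to both $a$ and $b$ by label-$2$ edges, so $I^\perp=\emptyset$. Hence $(I,J)$ is $S'$-admissible. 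Twisting by $\Delta_I$ reattaches the $K$-branch at the other endpoint of the edge, in the manner of the example from \cite{Mu-00}. We would iterate this, inducting on the sum of distances of edges from a chosen central vertex, until every edge shares one vertex; this uses the conjugacy of the odd-path class in Step 2 to know that the reattached vertex is still a reflection in the correct class. Once $\mathcal{V}_{(W,S')}$ is a star with label multiset $\{m_i\}$, it is isomorphic to $\mathcal{V}_{(W,S)}$. The delicate points are checking admissibility when the edge $\{a,b\}$ is even, which is where the hypothesis that even edges meet a leaf is used, and ensuring the induction terminates.
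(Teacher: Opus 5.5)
Your overall plan, iterated twists along edges of the tree toward a chosen centre, is the paper's. There are two differences. You move a whole branch per twist, whereas the paper moves a single leaf $t$ (taking $K_{I,J}=\{t\}$) one edge at a time along the odd path to the root. Also, your Step 1 supplies, via Lemma \ref{4.13[MRT07]}, the comparison of label multisets that the paper leaves implicit.

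However, Step 3 misdescribes what makes a twist move anything. Take $I=\{a,b\}$ and a vertex $k\in K_{I,J}$ adjacent to $a$. After conjugation by $\Delta_I$ it becomes adjacent to $b$ with the same label only because $\Delta_I a\Delta_I=b$, and this holds exactly when $m(a,b)$ is odd. If $m(a,b)$ is even, then $\Delta_I\in\Z(W_I)$ commutes with $a$ and $b$, so $m(a,\Delta_I k\Delta_I)=m(a,k)$. The twisted diagram is then isomorphic to the original, as the paper remarks right after the definition of a twist. So ``twisting by $\Delta_I$ reattaches the $K$-branch at the other endpoint'' is false for even edges, yet your procedure starts from an arbitrary spherical edge. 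Correspondingly, you put the hypothesis in the wrong place. Admissibility holds for every edge of a tree regardless of parity: a vertex of $I^{\perp}$ would be a common neighbour of $a$ and $b$, giving a triangle, and the two sides of the deleted edge are not joined.

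The hypothesis is actually needed to ensure that every edge you must twist along is odd. Since the rank is at least $3$, each even edge has exactly one leaf endpoint, so every edge joining two non-leaf vertices is odd. The repair is as follows.
\begin{enumerate}
\item Choose the centre to be a non-leaf vertex.
\item Twist only along internal edges $\{a,b\}$, with $b$ farther from the centre and $K_{I,J}$ the branch beyond $b$.
\item Each such twist makes $b$ a leaf while every other vertex keeps its leaf/non-leaf status; the diagram stays a tree and the hypothesis persists.
\item Induction on the number of non-leaf vertices then ends in a star, whose labels match those of $\mathcal{V}_{(W,S)}$ by your Step 1.
\end{enumerate}

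This makes Step 2 unnecessary, and as written it contains a false claim: there need not be a single vertex $t_0$ that is the non-leaf endpoint of every even edge. For example, take the star with $m_1=3$, $m_2=m_3=4$ and twist using $I=\{s_0,s_1\}$, $J=\{s_2\}$, $K_{I,J}=\{s_3\}$. This yields the path $s_2,s_0,s_1,\Delta_I s_3\Delta_I$ with labels $4,3,4$. It satisfies the hypothesis, but its even edges hang off $s_0$ and $s_1$ respectively. Likewise, no argument about reflections staying ``in the correct class'' is needed. The twist construction itself guarantees that $\T_{I,J}(S')$ is a Coxeter generating set with the described diagram.
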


\begin{proof}
Choose a vertex $s_0$ which is incident to an odd edge, and fix it as the root of the tree. Consider an edge $\{s,t\}\in \mathcal{V}_{(W, S')}$ with $\degree(t)=1$ and $\degree(s)>1$. We may assume $s\neq s_0$. Then, by assumption, there exists a unique odd path in $\mathcal{V}_{(W, S)}$ from $s_0$ to $s$.
 
Denote this path by $P^\odd_{s_0,s}:=\{s_0,s_{i_1},\ldots,s_{i_k}=s\}$ for some integer $k\geq 1$. Set $I=\{s_{i_{k-1}},s\}$ and $J=S\setminus\{t, s, s_{i_{k-1}}\}$ and $K_{I,J}=\{t\}$. If $k=1$ then, we interpret $s_{i_0}$ as $s_0$. It follows that $(I, J)$ is an $S'$-admissible pair. Let $S_1'=\T_{I,J}(S')$. Then, in $\mathcal{V}_{(W, S'_1)}$, the vertex $t^{\Delta_{k-1,l}}$ becomes incident to $s_{i_{k-1}}$ instead of $s_k$ with the same edge label $m(s,t)$, where $\Delta_{j-1,j}=s_{j}(s_{j-1}s_j)^{\frac{m(s_j,s_{j-1})-1}{2}}$ for $1\leq j\leq k$. 
\begin{figure}[ht]
    \centering
\begin{tikzpicture}
	\begin{pgfonlayer}{nodelayer}
		\node [style=new style 0, scale=0.5, color=black] (0) at (11, 1.5) {};
		\node [style=new style 0, scale=0.5, color=black] (1) at (11, 0.5) {};
		\node [style=new style 0, scale=0.5, color=black] (2) at (10, 0.75) {};
		\node [style=new style 0, scale=0.5, color=black] (3) at (12, 0.5) {};
		\node [style=new style 0, scale=0.5, color=black] (4) at (12.75, 1.25) {};
		\node [style=new style 0, scale=0.5, color=black] (5) at (13.75, 1.25) {};
		\node [style=new style 0, scale=0.5, color=black] (6) at (14.5, 0.5) {};
		\node [style=none] (7) at (9.25, 0.75) {};
		\node [style=none] (8) at (9.5, 0.25) {};
		\node [style=none] (9) at (12, 1.5) {};
		\node [style=none] (10) at (15, 1.25) {};
		\node [style=none] (11) at (15, 0.75) {};
		\node [style=none] (12) at (15, 0.25) {};
		\node [style=none] (13) at (11.45, 1.85) {$t^{\Delta_{k-1,l}}$};
		\node [style=none] (14) at (11, 0.2) {$s$};
		\node [style=none] (15) at (12.25, 0.15) {$s_{i_{k-1}}$};
		\node [style=none] (16) at (12.75, 1.55) {$s_{i_{k-2}}$};
		\node [style=none] (17) at (13.75, 1.55) {$s_{i_1}$};
		\node [style=none] (18) at (14.15, 0.5) {$s_0$};
		\node [style=none] (19) at (11.5, 1.25) {$e$};
		\node [style=none] (20) at (11.25, 0.7) {$o$};
		\node [style=none] (21) at (12.25, 1) {$o$};
		\node [style=none] (22) at (14.25, 1.05) {$o$};
		\node [style=new style 0, scale=0.5, color=black] (23) at (1, 1.5) {};
		\node [style=new style 0, scale=0.5, color=black] (24) at (1.75, 0.5) {};
		\node [style=new style 0, scale=0.5, color=black] (25) at (0.75, 0.75) {};
		\node [style=new style 0, scale=0.5, color=black] (26) at (3, 0.5) {};
		\node [style=new style 0, scale=0.5, color=black] (27) at (3.75, 1.25) {};
		\node [style=new style 0, scale=0.5, color=black] (28) at (4.75, 1.25) {};
		\node [style=new style 0, scale=0.5, color=black] (29) at (5.5, 0.5) {};
		\node [style=none] (30) at (0, 0.75) {};
		\node [style=none] (31) at (0.25, 0.25) {};
		\node [style=none] (32) at (2.75, 1.75) {};
		\node [style=none] (33) at (6.25, 1.25) {};
		\node [style=none] (34) at (6.25, 0.75) {};
		\node [style=none] (35) at (6, 0.25) {};
		\node [style=none] (36) at (0.95, 1.85) {$t$};
		\node [style=none] (37) at (1.75, 0.2) {$s$};
		\node [style=none] (38) at (3.25, 0.15) {$s_{i_{k-1}}$};
		\node [style=none] (39) at (3.75, 1.55) {$s_{i_{k-2}}$};
		\node [style=none] (40) at (4.75, 1.55) {$s_{i_1}$};
		\node [style=none] (41) at (5.15, 0.5) {$s_0$};
		\node [style=none] (42) at (1.5, 1.1) {$e$};
		\node [style=none] (43) at (2.25, 0.7) {$o$};
		\node [style=none] (44) at (3.25, 1) {$o$};
		\node [style=none] (45) at (5.25, 1.05) {$o$};
		\node [style=none] (46) at (7.25, 1) {};
		\node [style=none] (47) at (8.75, 1) {};
		\node [style=none] (48) at (8, 1.25) {\small{twist}};
	\end{pgfonlayer}
	\begin{pgfonlayer}{edgelayer}
		\draw (2) to (1);
		\draw (1) to (3);
		\draw (3) to (4);
		\draw (5) to (6);
		\draw [dotted] (7.center) to (2);
		\draw [dotted] (8.center) to (2);
		\draw [dotted] (9.center) to (4);
		\draw [dotted] (10.center) to (6);
		\draw [dotted] (11.center) to (6);
		\draw [dotted] (12.center) to (6);
		\draw [dotted] (4) to (5);
		\draw (0) to (3);
		\draw (25) to (24);
		\draw (24) to (26);
		\draw (26) to (27);
		\draw (28) to (29);
		\draw [dotted] (30.center) to (25);
		\draw [dotted] (31.center) to (25);
		\draw [dotted] (32.center) to (27);
		\draw [dotted] (33.center) to (29);
		\draw [dotted] (34.center) to (29);
		\draw [dotted] (35.center) to (29);
		\draw [dotted] (27) to (28);
		\draw [style=new edge style 1] (46.center) to (47.center);
		\draw (23) to (24);
	\end{pgfonlayer}
\end{tikzpicture}
\end{figure}

  Repeating the above step $k$ times, we obtain a Coxeter system $S'_k$ through the sequence $$S'\to S_1'\to \ldots\to S_k',$$ where $S'_j$ is a twist of $S_{j-1}'$ for $1\leq j\leq k$. Consequently, in $\mathcal{V}_{(W, S'_k)}$, the vertex $t^{\Delta_{0,1}\cdots\Delta_{k-1,k}}$ is incident to $s_0$ with the same edge label $m(s,t)$. Note that during the twisting process from $S'$ to $S'_k$, all vertices and edge incidences remain unchanged except for those involving $t$. By applying the same procedure for every edge, we can attach each edge to the vertex $s_0$. Hence, by repeated application of diagram twisting, we obtain that $\mathcal{V}_{(W, S'_k)}$ is a star diagram. Note that the above-described procedure is independent of the choice of $s_0$ since any two Coxeter generating sets for $W$, whose corresponding Coxeter diagrams are the same, are automorphic. This completes the proof.
\end{proof}

\begin{lemma}\label{Basic subsets match up to blowing up}
Let $W\in \sw$ with generating set $S$, and $S'$ be another Coxeter generating set for $W$. Then, up to blowing up, the basic subsets of $S$ and $S'$ match isomorphically.
\end{lemma}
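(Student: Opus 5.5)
We will compare the basic subsets of $S$ and $S'$ using the Basic Matching Theorem (Lemma \ref{Basic Matching Theorem}), and then remove every non-isomorphic matching by blowing up pseudo-transpositions in $S$.

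First we list the basic subsets of $S$. By Lemma \ref{maximal parabolic subgroups are not conjugate} and the remark after Lemma \ref{4.13[MRT07]}, they are exactly the pairs $B_i=\{s_0,s_i\}$ with $i\in I\setminus I_2$. Each generates $W_i\cong D_{2m_i}$ with $m_i\ge 3$. For each such $B_i$, Lemma \ref{Basic Matching Theorem} gives a unique basic subset $B_i'\subseteq S'$. Either they match isomorphically, or $m_i=2q+1$ is odd and $\langle B_i'\rangle$ has type $D_{2(4q+2)}$.

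Next we rule out mismatches in the opposite direction. Apply the Basic Matching Theorem with the roles of $S$ and $S'$ exchanged. A basic subset $B'\subseteq S'$ with $|\langle B'\rangle|<|\langle B\rangle|$ for its matching $B\subseteq S$ would force $\langle B\rangle$ to have type $D_{2(4q+2)}$ and $\langle B'\rangle$ to have type $D_{2(2q+1)}$. We handle this case in one of two ways.
\begin{enumerate}
\item[(a)] Check directly whether such an index $i$ (with $m_i\equiv 2 \pmod 4$) can occur. When it does, $s_i$ is a leaf with $m(s_0,s_i)=m_i\equiv 2\pmod 4$, and $m(s_i,s_k)=\infty$ for all other $k$. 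So $s_i$ is a pseudo-transposition of $S$: $t=s_0$ is the unique neighbour, and condition (2) holds vacuously because there are no labels $2$ at $s_i$.
\item[(b)] Blow up $s_i$, replacing it by $s_is_0s_i$ and $\Delta_i$. This changes the basic subset $\{s_0,s_i\}$ of type $D_{2m_i}$ into a basic subset $\{s_0, s_is_0s_i\}$ of type $D_{m_i}$ with $m_i/2$ odd, together with the commuting generator $\Delta_i$.
\end{enumerate}
After performing all such blow-ups on $S$, and symmetrically on $S'$ for mismatches in the other direction, the orders of matched basic groups agree. Clause (2) of Lemma \ref{Basic Matching Theorem} then gives isomorphic matching.

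The main obstacle is the symmetric case, where the pseudo-transposition lives in $S'$, about which we know little a priori. Here we would argue as follows. Suppose $B'=\{a,b\}\subseteq S'$ generates $D_{2(4q+2)}$ and matches $B_i$ with $m_i=2q+1$. Then $\langle B_i\rangle'$ is conjugate to $\langle B'\rangle'$, and since $\langle B'\rangle$ is a maximal finite subgroup, $\langle B_i\rangle$ is conjugate to an index-$2$ subgroup of $\langle B'\rangle$. This contradicts Lemma \ref{maximals are not conjugate}, because $W_i$ is itself a maximal finite subgroup of $W$. So this direction of mismatch is impossible.

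Consequently, mismatches arise only from labels $m_i\equiv 2\pmod 4$ in $S$, where $\langle B\rangle$ is the larger group. These are removed by blowing up the pseudo-transpositions $s_i$ as in (b). We also have to verify that the blown-up set has basic subsets matching those of $S'$ bijectively. This follows from the uniqueness statement in Lemma \ref{Basic Matching Theorem}, applied to the new generating set.
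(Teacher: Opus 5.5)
Your outline matches the paper's: blow up in $S$ exactly the leaves $s_i$ with $m_i\equiv 2\pmod 4$ whose partner in $S'$ is smaller, check that such an $s_i$ is a pseudo-transposition, and then invoke uniqueness in Lemma \ref{Basic Matching Theorem}. The difference is that the paper runs the comparison through Lemma \ref{4.13[MRT07]}, which says maximal spherical simplices of $S$ and $S'$ generate conjugate, hence isomorphic, subgroups. You instead compare orders of matched basic groups, and that substitution is where your argument has a hole.

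\textbf{The gap.} It is in the paragraph excluding $|\langle B_i\rangle|<|\langle B_i'\rangle|$ for odd $m_i$.
\begin{enumerate}
\item Lemma \ref{Basic Matching Theorem} only tells you that $\langle B_i\rangle'$ and $\langle B'\rangle'$ are conjugate.
\item The monomorphism $\langle B_i\rangle\to\langle B'\rangle$ it provides is conjugation only on $\langle B_i\rangle'$, not on the reflections $s_0,s_i$.
\item You assert, without proof, that $\langle B'\rangle$ is maximal finite. This is not automatic: $\langle s_0,s_is_0s_i\rangle$ after a blow-up is basic but not maximal finite.
\item Even granted, that maximality gives no reason why $W_i$ should be conjugate into $\langle B'\rangle$. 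Getting that would require controlling the normalizer of $\langle B'\rangle'$, which you do not do.
\end{enumerate}
So the inference ``$W_i$ is conjugate to an index-$2$ subgroup of $\langle B'\rangle$'' is unsupported as written.

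\textbf{The repair.} Argue as the paper does.
\begin{enumerate}
\item $B_i$ is a maximal spherical simplex, so by Lemma \ref{4.13[MRT07]}, $W_i$ is conjugate to $\langle M'\rangle$ for some maximal spherical simplex $M'$ of $S'$.
\item Since $D_{2m_i}$ with $m_i$ odd has abelianization $\mathbb{Z}_2$, $M'$ cannot be reducible. A reducible system would contribute at least two $\mathbb{Z}_2$ factors.
\item Hence $M'$ is a basic subset of type $D_{2m_i}$, and $\langle M'\rangle'$ is conjugate to $W_i'$.
\item Uniqueness in Lemma \ref{Basic Matching Theorem} forces $M'=B_i'$, so $|\langle B_i'\rangle|=|W_i|$ and this matching is already isomorphic.
\end{enumerate}

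\textbf{A smaller point.} In your last step, state explicitly why uniqueness transfers the partner to the blown-up generating set:
\[
\langle s_0,s_is_0s_i\rangle'=\langle (s_0s_i)^2\rangle=W_i'.
\]
So $\{s_0,s_is_0s_i\}$ has the same partner $B_i'$ as $B_i$ did, and both now have order $m_i$. Clause (2) of Lemma \ref{Basic Matching Theorem} then applies.

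Your items (a) and (b) are consecutive steps, not alternatives. Also drop the announced ``symmetric'' blow-ups in $S'$, since you then show that direction never occurs. With these corrections the argument is complete.
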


\begin{proof}
In the Coxeter system $(W, S)$, the subgroups generated by the maximal spherical simplices are precisely the maximal finite standard parabolic subgroups. Then, by Lemma \ref{4.13[MRT07]}, there is a one-to-one correspondence between the subgroups generated by the maximal spherical simplices in $(W, S)$ and $(W, S')$, as no two maximal spherical simplices of $(W, S)$ are conjugate in $W$. Suppose $M$ is a maximal spherical simplex of $(W, S)$, such that $\langle M \rangle \cong D_{2(4k+2)}$ for some $k\geq1$ and $|M|=2$, whereas the corresponding maximal spherical simplex $M'$ has the cardinality $3$. Then, apply blowing up for all such $M$ in $(W, S)$ and obtain a new Coxeter generating set $\widetilde{S}$ from $S$ for $W$ such that the size of the maximal spherical simplices is also preserved under the one-to-one correspondence of the maximal spherical simplices between $(W,\widetilde{S})$ and $(W, S')$.
    
\begin{figure}[ht]
        \centering
\begin{tikzpicture}
	\begin{pgfonlayer}{nodelayer}
		\node [style=new style 0, scale=0.5, color=black] (0) at (1.5, 0) {};
		\node [style=new style 0, scale=0.5, color=black] (1) at (0.75, 0) {};
		\node [style=new style 0, scale=0.5, color=black] (2) at (1.5, 0.75) {};
		\node [style=new style 0, scale=0.5, color=black] (3) at (1.5, -0.75) {};
		\node [style=new style 0, scale=0.5, color=black] (4) at (2, -0.5) {};
		\node [style=new style 0, scale=0.5, color=black] (5) at (2.25, 0) {};
		\node [style=new style 0, scale=0.5, color=black] (6) at (2, 0.5) {};
		\node [style=new style 0, scale=0.5, color=black] (7) at (7.25, 0) {};
		\node [style=new style 0, scale=0.5, color=black] (8) at (5.75, 0) {};
		\node [style=new style 0, scale=0.5, color=black] (9) at (7.25, 0.75) {};
		\node [style=new style 0, scale=0.5, color=black] (10) at (7.25, -0.75) {};
		\node [style=new style 0, scale=0.5, color=black] (11) at (7.75, -0.5) {};
		\node [style=new style 0, scale=0.5, color=black] (12) at (8, 0) {};
		\node [style=new style 0, scale=0.5, color=black] (13) at (7.75, 0.5) {};
		\node [style=none] (14) at (3, 0) {};
		\node [style=none] (15) at (4.75, 0) {};
		\node [style=none] (16) at (3.85, 0.25) {\small{blowing up}};
		\node [style=new style 0, scale=0.5, color=black] (17) at (6.5, 0.75) {};
		\node [style=none] (18) at (1, 0.25) {\tiny{$4k+2$}};
		\node [style=none] (19) at (6.5, 0.2) {\tiny{$2k+1$}};
		\node [style=none] (20) at (6, 0.5) {\tiny{$2$}};
		\node [style=none] (21) at (7, 0.5) {\tiny{$2$}};
	\end{pgfonlayer}
	\begin{pgfonlayer}{edgelayer}
		\draw [style=new edge style 1] (14.center) to (15.center);
		\draw (0) to (5);
		\draw (0) to (6);
		\draw (0) to (2);
		\draw (0) to (3);
		\draw (0) to (4);
		\draw (7) to (10);
		\draw (7) to (11);
		\draw (7) to (12);
		\draw (7) to (13);
		\draw (7) to (9);
		\draw (1) to (0);
		\draw (8) to (7);
		\draw (17) to (7);
		\draw (17) to (8);
	\end{pgfonlayer}
\end{tikzpicture}
   \end{figure}
    
Except for the maximal spherical simplices of type $D_{2(2k+1)}\times \mathbb{Z}_2$, the remaining maximal spherical simplices are indeed basic subsets for both the generating set $\widetilde{S}$ and $S'$. The rest of the basic subsets of $\widetilde S$ and $S'$ have type $D_{2(2k+1)}$, containing inside the maximal spherical simplices of type $D_{2(2k+1)}\times \mathbb{Z}_2$. This provides bijections between the maximal spherical simplices and the basic subsets of $\widetilde{S}$ and $S'$. Therefore, by the Lemma $\ref{Basic Matching Theorem}$, we get that the basic subsets of $\widetilde{S}$ and $S'$ match isomorphically.
\end{proof}

\begin{theorem}\label{rigidity theorem for W(T)}
Let $W \in \sw$ and $S'$ be a Coxeter generating set of $W$. Then, up to diagram twisting and triangle elimination, $\mathcal{V}_{(W, S')}$ is a star diagram.
\end{theorem}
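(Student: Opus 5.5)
The plan is to reduce the statement to Lemma \ref{structure of tree diagrams for groups in W(T)} by showing that, after triangle elimination, $\mathcal{V}_{(W,S')}$ is a tree in which every even-labelled edge is incident to a leaf.

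\emph{Step 1: match basic subsets.} By Lemma \ref{Basic subsets match up to blowing up}, after blowing up some vertices of $S$ to $\widetilde S$, the basic subsets of $\widetilde S$ and $S'$ match isomorphically. Blowing up $S$ is the same as performing triangle elimination on the $S'$ side. Concretely, each maximal simplex of $S'$ of type $D_{2(2k+1)}\times\mathbb{Z}_2$ corresponding to a $D_{2(4k+2)}$ simplex of $S$ is a triangle with a pseudo-transposition. Eliminating all such triangles in $S'$ gives a generating set $S''$ whose maximal spherical simplices correspond, via Lemma \ref{4.13[MRT07]}, bijectively and isomorphically to those of $S$.

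\emph{Step 2: read off the diagram of $S''$.} With basic subsets matching isomorphically, the Simplex Matching Theorem \ref{Simplex Matching Theorem} gives a bijection of simplices preserving type, so $|S''|=|S|=n+1$. The maximal simplices of $(W,S'')$ are then exactly $n$ edges with the same multiset of labels $\{m_i\}$. In particular, there are no spherical or non-spherical simplices of size $\ge3$, because the simplex bijection preserves all simplices, not just spherical ones, and $S$ has no triangles. Hence $\mathcal{V}_{(W,S'')}$ is a graph with $n+1$ vertices and $n$ edges.

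\emph{Step 3: connectedness.} $W$ is not a free product of proper parabolic subgroups; for instance, every element of the generating set lies in a finite subgroup meeting the $s_0$-conjugacy structure, or one can use that the number of ends and the free splitting are detected by the finite diagram. So $\mathcal{V}_{(W,S'')}$ is connected, and therefore a tree.

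\emph{Step 4: even edges lie at leaves.} Let $\{s,t\}$ be an edge of $S''$ with even label. I would use conjugacy-class counting via Lemma \ref{conjugacy lemma through odd graphs}, which also holds for $S''$ by \cite[Lemma 3.6]{BMMN-02}. In $S$ there are exactly $n_e+1$ conjugacy classes of reflections generating, namely one odd class and $n_e$ singletons. The set of reflections $S^W=S''^W$ is intrinsic after triangle elimination, so the number of odd-connected components of $\mathcal{V}_{(W,S'')}$ must be $n_e+1$. The odd edges of a tree with $n$ edges, $n_e$ of them even, give exactly $n_e+1$ odd components. This holds automatically, so the count alone does not suffice.

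Instead, I would use the quotient argument of Corollary \ref{quotient by generators} and Lemma \ref{s0 maps to s0 under automorphism of W}. Killing the big odd class yields $\U_{n_e}$, so in $S''$, deleting the largest odd component must leave only isolated vertices with pairwise infinite labels. Each even edge must therefore join the big odd component to a singleton class. The singleton class is then a vertex whose only edges are even. If such a vertex had two even edges, to $u$ and $v$, the quotient by the big component would not remove both, unless both lie in it. Comparing with $W/\langle\langle s_j\rangle\rangle$ for a singleton $s_j$, whose quotient is $W_{S\setminus\{s_j\}}$ with $n-1$ finite edges, forces each singleton vertex to have degree one.

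This step is the main obstacle: showing that every even edge has a leaf endpoint in $S''$. I expect to need the quotient invariants together with the centralizer structure (Lemma \ref{structure of the centralizer for s0}), rather than pure counting.

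\emph{Step 5: conclude.} Finally, apply Lemma \ref{structure of tree diagrams for groups in W(T)} to twist $\mathcal{V}_{(W,S'')}$ into a star diagram.
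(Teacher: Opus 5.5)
\textbf{There is a genuine gap at Step 4.} Your outline matches the paper's: match up to blowing up, apply Simplex Matching, get a tree, put even edges at leaves, apply Lemma \ref{structure of tree diagrams for groups in W(T)}. But Step 4, which is where the star shape of $\mathcal{V}_{(W,S)}$ must actually be used, is not proved, and the tools you reach for rest on a false premise.

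The identity $S^W=S''^W$ fails in general. For $i\in I_2$, $\tau_i(S)$ is a Coxeter generating set (even with a star diagram) containing $s_0s_i$, which has even length and so is not an $S$-reflection. Without that identity, nothing identifies an odd component of $S''$ with the class of $s_0$, or a singleton of $S''$ with some $s_j$, $j\in I_\even$. So Corollary \ref{quotient by generators} cannot be transported to $S''$. If you could produce an odd component $C$ of $S''$ with $W/\langle\langle C\rangle\rangle\cong\U_{n_e}$, your count would finish: $S''\setminus C$ would be $n_e$ isolated vertices sharing the $n_e$ even edges. But producing such a $C$ is exactly the open point.

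The missing idea is an invariant read off from maximal finite subgroups. This is what the paper's appeal to Lemma \ref{4.13[MRT07]} and to ``the unique maximal parabolic subgroup containing $v$'' amounts to. For $i\in I_\even$, $s_i$ lies in exactly one maximal finite subgroup of $W$, namely $W_i$; this follows from Lemma \ref{conjugacy lemma through odd graphs} and $\C_W(s_i)\le W_i$. For $i\in I_2$, the same holds for $s_0s_i=\tau_i(s_i)$. Now suppose an even edge $\{u,v\}$ of the tree $\mathcal{V}_{(W,S'')}$ had both endpoints of degree at least $2$.
\begin{itemize}
\item Then $u$, $v$ and all their conjugates lie in two distinct maximal finite subgroups.
\item Also $\langle u,v\rangle=gW_ig^{-1}$ with $m_i=m(u,v)$ even.
\item Every involution of $\langle u,v\rangle$ other than $\Delta_{uv}$ is conjugate in $\langle u,v\rangle$ to $u$ or $v$, so $gs_ig^{-1}=\Delta_{uv}$.
\item If $m_i\ge 4$, this contradicts the non-centrality of $s_i$ in $W_i$. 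If $m_i=2$, it forces $gs_ig^{-1}=uv=gs_0s_ig^{-1}$, which is impossible.
\end{itemize}

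\textbf{Step 1 also needs repair.} You cannot eliminate the $D_{2(2k+1)}\times\mathbb{Z}_2$ triangles of $S'$ directly. Consider a blow-up of a pseudo-transposition $\tau$ with partner $t$. In the present setting no outside vertex commutes with the whole triangle, since there are no $4$-simplices, so every outside $s$ has $m(s,\tau)=\infty$. A root computation then gives $m(s,\tau t\tau)=\infty$. Hence only one vertex of the odd edge of the triangle meets the rest of the diagram. However, for $n\ge 3$, twisting the blown-up star in the odd dihedral subgroup $\langle s_0,s_is_0s_i\rangle$ moves some leaves onto $s_is_0s_i$. This gives an admissible $S'$ whose triangle is not the blow-up of any pseudo-transposition. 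The paper handles this in three stages: it first shows that the vertex carrying both $2$-labelled edges has no other neighbours; it then twists so that a single triangle vertex meets the rest of the diagram; only then does it eliminate.

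\textbf{Step 3 is not yet an argument, but is easily fixed.} Connectedness follows from free indecomposability of $W$. Suppose $W=A*B$. By Kurosh, each $W_i$ lies in a conjugate of a factor. All $W_i$ contain $s_0$, while distinct conjugates of free factors meet trivially. So all the $W_i$ lie in a single conjugate of one factor, which is then all of $W$.
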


\begin{proof}
Let $\widetilde S$ be as described in Lemma $\ref{Basic subsets match up to blowing up}$. By by Theorem $\ref{Simplex Matching Theorem}$ and Lemma $\ref{Basic subsets match up to blowing up}$, we get that, up to blowing up, $|\widetilde S|=|S'|$ and edge-label multiset of the diagrams $\mathcal{V}_{(W,\widetilde{S})}$ and $\mathcal{V}_{(W, S')}$ agree. Moreover, there is a bijective correspondence between the finite standard parabolic subgroups of the same isomorphism type for the systems $(W,\widetilde S)$ and $(W, S')$. So the only loops in $(W, S')$ are the ones corresponding to the maximal spherical simplex of type $D_{2(2k+1)}\times \mathbb{Z}_2$. If an edge $e'$ in $\mathcal{V}_{(W, S')}$ is evenly labelled, then one of its incident vertices, say $v'$, has degree one. This follows because the maximal spherical simplex $M'$ corresponding to $e'$ is conjugate to a maximal spherical simplex $M$ of $(W, S)$ associated with an edge $e$ where $v'$ corresponds to a vertex $v$ and $\langle M \rangle$ is the unique maximal parabolic subgroup containing $v$.

Consider a maximal spherical simplex $M'$ of type $D_{2(2k+1)}\times \mathbb{Z}_2$. Then the vertex incident to both the edges of labelled $2$ is not adjacent to any other vertices in $\mathcal{V}_{(W, S')}$ since that vertex corresponds to the unique maximal element of the a dihedral system of type $D_{2(4k+2)}$ which has no non-trivial relations with the remaining Coxeter generating elements in $S'$ due to the correspondence of maximal spherical simplices. of $\widetilde{S}$ and $S'$. Hence, up to diagram twisting, we may assume that there is exactly one vertex of degree greater than or equal to $3$ in $M'$.
\begin{figure}[ht]
        \centering
\begin{tikzpicture}
	\begin{pgfonlayer}{nodelayer}
		\node [style=new style 0, scale=0.5, color=black] (0) at (0, 0) {};
		\node [style=new style 0, scale=0.5, color=black] (1) at (0.75, 0) {};
		\node [style=new style 0, scale=0.5, color=black] (2) at (2.25, 0) {};
		\node [style=new style 0, scale=0.5, color=black] (3) at (3, 0) {};
		\node [style=new style 0, scale=0.5, color=black] (4) at (1.5, 0.75) {};
		\node [style=none] (5) at (0.5, -0.75) {};
		\node [style=none] (6) at (0.5, 0.75) {};
		\node [style=none] (7) at (2.5, -0.75) {};
		\node [style=none] (8) at (2.5, 0.75) {};
		\node [style=none] (9) at (-0.5, -0.5) {};
		\node [style=none] (10) at (-0.5, 0.5) {};
		\node [style=none] (11) at (3.5, -0.5) {};
		\node [style=none] (12) at (3.5, 0.5) {};
		\node [style=none] (13) at (-0.75, 0) {};
		\node [style=none] (14) at (3.75, 0) {};
		\node [style=none] (15) at (1.5, 0.18) {\tiny{$2k+1$}};
		\node [style=none] (16) at (1, 0.5) {\tiny{$2$}};
		\node [style=none] (17) at (2, 0.5) {\tiny{$2$}};
		\node [style=new style 0, scale=0.5, color=black] (18) at (7, 0) {};
		\node [style=new style 0, scale=0.5, color=black] (19) at (7.75, 0) {};
		\node [style=new style 0, scale=0.5, color=black] (20) at (9.25, 0) {};
		\node [style=new style 0, scale=0.5, color=black] (22) at (8.5, 0.75) {};
		\node [style=none] (23) at (7.5, -0.75) {};
		\node [style=none] (24) at (7.5, 0.75) {};
		\node [style=none] (25) at (7.75, -1) {};
		\node [style=none] (26) at (7.75, 1) {};
		\node [style=none] (27) at (6.5, -0.5) {};
		\node [style=none] (28) at (6.5, 0.5) {};
		\node [style=none] (29) at (8.5, -1) {};
		\node [style=none] (30) at (9, -0.75) {};
		\node [style=none] (31) at (6.25, 0) {};
		\node [style=none] (32) at (8.75, -1) {};
		\node [style=none] (33) at (8.5, 0.18) {\tiny{$2k+1$}};
		\node [style=none] (34) at (8, 0.5) {\tiny{$2$}};
		\node [style=none] (35) at (9, 0.5) {\tiny{$2$}};
		\node [style=none] (36) at (4.25, 0) {};
		\node [style=none] (37) at (5.75, 0) {};
		\node [style=none] (38) at (5, 0.25) {\small{twist}};
		\node [style=new style 0, scale=0.5, color=black] (39) at (8.35, -0.5) {};
	\end{pgfonlayer}
	\begin{pgfonlayer}{edgelayer}
		\draw (0) to (1);
		\draw (1) to (2);
		\draw (2) to (3);
		\draw (1) to (4);
		\draw (4) to (2);
		\draw (18) to (19);
		\draw (19) to (20);
		\draw (19) to (22);
		\draw (22) to (20);
		\draw [style=new edge style 1] (36.center) to (37.center);
		\draw (19) to (39);
		\draw [dotted] (13.center) to (0);
		\draw [dotted] (10.center) to (0);
		\draw [dotted] (9.center) to (0);
		\draw [dotted] (1) to (5.center);
		\draw [dotted] (6.center) to (1);
		\draw [dotted] (8.center) to (2);
		\draw [dotted] (7.center) to (2);
		\draw [dotted] (11.center) to (3);
		\draw [dotted] (3) to (14.center);
		\draw [dotted] (12.center) to (3);
		\draw [dotted] (31.center) to (18);
		\draw [dotted] (27.center) to (18);
		\draw [dotted] (18) to (28.center);
		\draw [dotted] (23.center) to (19);
		\draw [dotted] (24.center) to (19);
		\draw [dotted] (29.center) to (39);
		\draw [dotted] (32.center) to (39);
		\draw [dotted] (30.center) to (39);
		\draw [dotted] (26.center) to (19);
		\draw [dotted] (25.center) to (19);
	\end{pgfonlayer}
\end{tikzpicture}
\end{figure}

By using triangle elimination, from $S'$, we obtain a Coxeter generating set $\widetilde{S'}$ whose corresponding finite diagram is a tree such that all the even-labelled edges are incident to some vertex of degree one in $\mathcal{V}_{(W,\widetilde{S'})}$. Hence, Lemma \ref{structure of tree diagrams for groups in W(T)} can be applied to $\widetilde{S'}$, which gives us a star diagram $\widetilde{\mathcal{V}}$ from $\mathcal{V}_{(W,\widetilde{S}')}$ using diagram twisting. So, $\widetilde{\mathcal{V}} \cong \mathcal{V}_{(W, S)}$ and there is an automorphism $\varphi \in \Aut(W)$ which takes $S$ to the Coxeter generating set corresponding to the vertex set of $\widetilde{\mathcal{V}}$.
\end{proof}

\begin{corollary}
If $W\in \sw$, then $W$ is rigid up to diagram twisting and blowing up.
\end{corollary}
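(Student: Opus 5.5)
The corollary should follow directly from Theorem \ref{rigidity theorem for W(T)}, together with the fact that diagram twisting and blowing up (triangle elimination) are operations on Coxeter generating sets of the same group $W$. The plan is to take an arbitrary Coxeter generating set $S'$ of $W$ and show that its finite diagram $\mathcal{V}_{(W,S')}$ is carried to $\mathcal{V}_{(W,S)}$ by a finite sequence of twists and blow-ups or triangle eliminations. Since all these operations are reversible, this is exactly rigidity modulo these moves.

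First, I apply Theorem \ref{rigidity theorem for W(T)}. It produces generating sets
\[
S' \to \widetilde{S'} \to \cdots \to S''
\]
where each step is a triangle elimination or a twist, and $\mathcal{V}_{(W,S'')}$ is a star diagram.

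Second, I identify this star with $\mathcal{V}_{(W,S)}$. By Lemma \ref{Basic subsets match up to blowing up} and Theorem \ref{Simplex Matching Theorem}, applied to $S$ and $S''$ after blowing up where needed, the two systems have the same number of generators and the same multiset of edge labels. Both diagrams are stars, so a star is determined up to isomorphism by its number of leaves and its multiset of labels. Hence $\mathcal{V}_{(W,S'')}\cong\mathcal{V}_{(W,S)}$.

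Third, I reverse the triangle eliminations, which are blow-ups, and note that twists are reversible. This gives a chain of twists and blow-ups relating $\mathcal{V}_{(W,S)}$ and $\mathcal{V}_{(W,S')}$. That is the statement of the corollary.

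The main obstacle is the bookkeeping in the second step. The label multisets of $S$ and $S''$ agree only after the blow-ups that turn $D_{2(4k+2)}$ edges into triangles, so I must check that these blow-ups are compatible on both sides. Here is how I would handle it. Triangle elimination in Theorem \ref{rigidity theorem for W(T)} removes exactly the triangles of type $D_{2(2k+1)}\times\mathbb{Z}_2$ and replaces each with a $(4k+2)$-edge. The correspondence of maximal spherical simplices from Lemma \ref{4.13[MRT07]} then matches each maximal finite subgroup $W_i$ with a conjugate of the same order. This forces the star $\mathcal{V}_{(W,S'')}$ to have the same labels $m_i$ as $\mathcal{V}_{(W,S)}$, with no leftover blow-ups. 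If this comparison fails, I would instead conclude the isomorphism using the orders and isomorphism types of the maximal finite subgroups listed in Lemma \ref{maximal parabolic subgroups are not conjugate}, which are group invariants.
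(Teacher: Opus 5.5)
Your proposal is correct and follows the paper's route: the corollary is an immediate consequence of Theorem \ref{rigidity theorem for W(T)}, whose proof already ends by identifying the resulting star with $\mathcal{V}_{(W,S)}$, together with the fact that twists and triangle eliminations can be inverted. Your justification of that identification via Lemma \ref{4.13[MRT07]} is sound and makes explicit what the paper only asserts: conjugate maximal finite subgroups have the same order $2m_i$, so two star presentations of $W$ have the same number of leaves and the same label multiset. Note that no blow-ups are needed for this comparison, since both diagrams are already stars.
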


\begin{corollary}
Let $W\in \sw$. Then $W$ is rigid if and only if $W$ is strongly even.    
\end{corollary}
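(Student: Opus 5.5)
The plan is to prove the two implications separately. The ``if'' direction needs no new work. The ``only if'' direction is proved by contraposition: whenever some finite label is neither $2$ nor a multiple of $4$, I will exhibit a second Coxeter generating set of $W$ whose Coxeter graph is not isomorphic to that of $(W,S)$. Since every finite label of a star-shaped system is some $m_i$, failure of strong evenness means that some $m_i$ is odd or some $m_i\equiv 2 \pmod 4$ with $m_i\ge 6$.

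\emph{Sufficiency.} If $W$ is strongly even, i.e.\ every $m_i$ is $2$ or a multiple of $4$, then $W$ is rigid by Radcliffe's theorem \cite{Ra-01}. Alternatively, it follows from Theorem~\ref{rigidity theorem for W(T)}. Blowing up never applies, because a pseudo-transposition requires a label $\equiv 2 \pmod 4$ other than $2$. Twisting in a finite $W_I$ preserves the diagram, because $\Delta_I$ is central in $W_I$ for even dihedral $W_I$ and for $\mathbb{Z}_2\times\mathbb{Z}_2$.

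\emph{Necessity, case $m_j\equiv 2\pmod 4$ with $m_j\geq 6$.} I check that $\tau=s_j$ is a pseudo-transposition. The unique $t$ with $m(\tau,t)\equiv 2\pmod 4$ is $t=s_0$, and every other generator $s_k$ satisfies $m(s_k,s_j)=\infty$, so the second condition holds vacuously. Blowing up then yields the Coxeter generating set $(S\setminus\{s_j\})\cup\{s_js_0s_j,\Delta_j\}$ of rank $n+2$. Its diagram contains a triangle with labels $m_j/2,2,2$, so it is not isomorphic to a star. Hence $W$ is not rigid.

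\emph{Necessity, case $m_i$ odd.} Here I use diagram twisting in $W_I$ with $I=\{s_0,s_i\}$. First, $I^{\perp}=\emptyset$, since a leaf $s_k$ with $k\neq i$ has $m(s_k,s_i)=\infty$. Suppose $n\ge 3$, and choose a leaf $s_j$ with $j\neq i$. Put $J=\{s_j\}$ and $K_{I,J}=S\setminus\{s_0,s_i,s_j\}\neq\emptyset$. Distinct leaves have label $\infty$, so $(I,J)$ is $S$-admissible. Because $m_i$ is odd, $\Delta_i s_0\Delta_i=s_i$. Therefore each twisted generator $\Delta_i s_k\Delta_i$ with $s_k\in K_{I,J}$ has finite label $m_k$ only with $s_i$. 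The resulting finite diagram is a tree in which both $s_0$ and $s_i$ have degree $\ge 2$, so it is not a star. Since all other labels are $\infty$, the Coxeter graphs are then non-isomorphic.

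\emph{Expected obstacle.} The main difficulty is the residual case $n=2$, that is rank $3$, with $m_1$ odd and $m_2$ divisible by $4$. Here twisting is trivial and blowing up is unavailable. Theorem~\ref{rigidity theorem for W(T)} suggests every Coxeter generating set then has a star (path) diagram, which would make $W$ rigid. I would first check this carefully against Franzsen's rank $3$ classification \cite{Fr-01}. If $W$ is indeed rigid, the statement needs the hypothesis $n\ge 3$, or else a refined notion of ``strongly even'' that allows this configuration. Otherwise I would look for an explicit non-star generating set among the reflections of $W_1\cup W_2$.
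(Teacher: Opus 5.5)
Your proof is correct for every case in which the statement is actually true, and your suspicion about rank $3$ is also correct: there the corollary is false. The paper gives no separate proof. It reads the corollary off Radcliffe's theorem and Theorem~\ref{rigidity theorem for W(T)} (rigidity up to twisting and blowing up), and your case analysis is the argument behind that reading. Both of your constructions check out. The blow-up at $s_j$ when $m_j\equiv 2\pmod 4$, $m_j\ge 6$, changes the rank from $n+1$ to $n+2$. The twist in $W_{\{s_0,s_i\}}$ with $J=\{s_j\}$ for odd $m_i$ works exactly when $n\ge 3$, since $K_{I,J}\neq\emptyset$ needs a third leaf. For sufficiency, use Radcliffe rather than the alternative route through Theorem~\ref{rigidity theorem for W(T)}. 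The proof of Lemma~\ref{structure of tree diagrams for groups in W(T)} roots the tree at a vertex on an odd edge, and no such vertex exists when $W$ is strongly even.

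You do not need Franzsen to settle the residual case. Take $n=2$, $m_1$ odd and $m_2\in\{2\}\cup 4\mathbb{Z}$, for instance $(m_1,m_2)=(3,4)$, or $(3,2)$, which gives $PGL_2(\mathbb{Z})$. Let $S'$ be any Coxeter generating set.

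\begin{enumerate}
\item[(a)] Applying Lemma~\ref{4.13[MRT07]} in both directions, $(W,S')$ has exactly two maximal spherical simplices $M_1',M_2'$, with $\langle M_k'\rangle$ conjugate to $W_k\cong D_{2m_k}$.
\item[(b)] A dihedral group $D_{2m}$ has a Coxeter system of rank $\ge 3$ only if $m\equiv 2\pmod 4$ and $m\ge 6$; such a system must be of type $A_1\times I_2(m/2)$. Hence $|M_1'|=|M_2'|=2$, with labels $m_1$ and $m_2$.
\item[(c)] Every vertex and every finitely labelled edge of $\mathcal{V}_{(W,S')}$ lies in a maximal spherical simplex. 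So the finite diagram consists of just these two edges.
\item[(d)] The two edges cannot be disjoint. Otherwise $W\cong D_{2m_1}*D_{2m_2}$, whose abelianization is $(\mathbb{Z}_2)^3$, whereas $W^{ab}\cong(\mathbb{Z}_2)^2$.
\end{enumerate}

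So $\mathcal{V}_{(W,S')}$ is a path with labels $m_1,m_2$, isomorphic to $\mathcal{V}_{(W,S)}$. Thus $W$ is rigid but not strongly even.

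The paper's deduction misses this because, with three generators, the only pair labelled $\infty$ is $\{s_1,s_2\}$. Any twist with $J$ and $K_{I,J}$ both nonempty therefore has $I=\{s_0\}$. Then $\Delta_I=s_0$ is central in $W_I$, so the twist never changes the diagram.

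The correct statement is as follows. For $n\ge 3$, $W$ is rigid if and only if it is strongly even; this is your proof. For $n=2$, $W$ is rigid if and only if no label is $\equiv 2\pmod 4$ and at least $6$. The argument above proves the latter, with the same abelianization count covering the other parity combinations.
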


\begin{remark}
By Theorem $\ref{rigidity theorem for W(T)}$, the isomorphism problem for the family $\sw$ of Coxeter groups is solved.
\end{remark}

\begin{ack}
The authors acknowledge the support of National Institute of Science Education and Research (NISER), Bhubaneswar, and Homi Bhabha National Institute (HBNI), Mumbai. The second named author acknowledges partial support from ANRF via the grants SRG/2023/001556/PMS and ANRF/ARGM/2025/001758/MTR.
\end{ack}

\medskip

\textbf{Addresses}: Arijit Mahato, Tushar Kanta Naik, A Rameswar Patro.
\smallskip
\begin{itemize}
\item[1.] School of Mathematical Sciences, 
\newline National Institute of Science Education and Research (NISER), Bhubaneswar,
\newline P. O. Jatni, Khurda, Odisha - 752050, India.
\smallskip
\item[2.] Homi Bhabha National Institute (HBNI), 
\newline Training School Complex, Anushakti Nagar,  Mumbai 400094, India.
\end{itemize}
\smallskip

\textbf{E-mails}: arijit.mahato@niser.ac.in, tushar@niser.ac.in, arameswar.patro@niser.ac.in.
\end{document}